\theoremstyle{plain}
\newtheorem{main}{Theorem}
\newtheorem{maincor}[main]{Corollary}
\newtheorem{theorem}{Theorem}[section]
\newtheorem{lemma}[theorem]{Lemma}
\newtheorem{proposition}[theorem]{Proposition}
\newtheorem{corollary}[theorem]{Corollary}
\theoremstyle{remark}
\newtheorem{remark}[theorem]{Remark}
\newtheorem{definition}{Definition}
\newcommand\numberthis{\addtocounter{equation}{1}\tag{\theequation}}
\newcommand{\Leb}{\operatorname{vol}}
\newcommand{\C}{\operatorname{C}}
\newcommand{\Sing}{\operatorname{Sing}}
\newcommand{\Card}{\operatorname{Card}}
\newcommand{\Jac}{\operatorname{Jac}}
\newcommand{\diam}{\operatorname{diam}}
           \def\ea{\end{array}}
          \def\ec{\end{center}}
     \def\ed{\end{description}}
        \def\ee{\end{equation}}
       \def\eea{\end{eqnarray}}
     \def\eeaa{\end{eqnarray*}}
 \def\et{\end{thebibliography}}
\def\Orb{{\rm Orb}}
\def\Diff{{\rm Diff}}
\def\Sing{{\rm Sing}}
\def\supp{\operatorname{supp}}
\def\cA{{\mathcal A}}
\def\cD{{\mathcal D}}
\def\cU{{\mathcal U}}
\def\cR{{\mathcal R}}
\def\cF{{\mathcal F}}
\def\cN{{\mathcal N}}
\def\cP{{\mathcal P}}
\def\cR{{\mathcal R}}
\def\length{\operatorname{length}}
\def\RR{{\mathbb R}}
\def\ZZ{{\mathbb Z}}
\def\NN{{\mathbb N}}
\def\tl{\tilde{\Lambda}}
\def\bcf{\overline{\cF}}
\title[Lorenz-like flows]{Entropy theory for sectional hyperbolic flows}
\author{Maria Jos\' e Pacifico, Fan Yang and Jiagang Yang}
\date{\today}
\thanks{M. J. P. and J.Y. are partially supported by CNPq, FAPERJ, PROEX-CAPES. F.Y. would like to thank the hospitality of Southern University of Science and Technology of China (SUSTC), where part of this work is done.}
\address{Instituto de Matem\'atica, Universidade Federal do Rio de Janeiro, C. P. 68.530, CEP 21.945-970,  Rio de Janeiro, RJ, Brazil.}
 \email{pacifico@im.ufrj.br }
\address{Department of Mathematics, University of Oklahoma, Norman, Oklahoma, USA.}
\email{fan.yang-2@ou.edu}
\address{Department of Mathematics, Southern University of Science and Technology of China, Guangdong, China; and 
Departamento de Geometria, Instituto de Matem\'atica e Estat\'istica, Universidade
Federal Fluminense, Niter\'oi, Brazil.}
\email{yangjg\@@impa.br}
\begin{document}

\begin{abstract}
We use entropy theory as a new tool to study sectional hyperbolic flows in any dimension. We show that for $C^1$ flows,  every sectional hyperbolic set $\Lambda$ is entropy expansive, and the topological entropy varies continuously with the flow. Furthermore, if $\Lambda$ is Lyapunov stable, then it has positive entropy; in addition, if $\Lambda$ is a chain recurrent class, then it contains a periodic orbit. As a corollary, we prove that for $C^1$ generic flows, every Lorenz-like class is an attractor. \end{abstract}
%We also show that  non-trivial chain recurrent classes for  generic $C^1$ star flows satisfy a dichotomy: either it has zero entropy, or it is isolated. As a result, $C^1$ generic star flows have only finitely many Lyapunov stable chain recurrent classes.

\maketitle

\tableofcontents

%%%%%%%%%%%%%%%%%%%%%%%%%%%%%%%%%%%%%%%%%%
%%%%%%%%%%%   SECTION: INTRODUCTION
%%%%%%%%%%%%%%%%%%%%%%%%%%%%%%%%%%%%%%%%%%
\section{Introduction}
About half a century ago, Lorenz published his famous article~\cite{Lo63} in which he used computer-aided numerical simulation to study the following system, which is now known as the Lorenz equations:
\begin{equation}\label{e.Lorenz}
\begin{cases} 
    \dot{x}=-\sigma x+\sigma y\quad\quad\quad\sigma=10\\
    \dot{y}=ry-xz\quad\quad\quad\quad r=28\\
    \dot{z}=-bz+xy\quad\quad\quad\ b=8/3.
\end{cases}
\end{equation}
Numerical simulations  for an open neighborhood of the chosen parameters suggested that almost all points in the phase space tend to a {\em{chaotic attractor}} .

An attractor is a bounded region in the phase space, invariant under time evolution, to which the forward trajectories of most (positive probability) or, even, all nearby points converge. What makes an attractor chaotic is the fact that trajectories converging to the attractor are sensitive with respect to initial data: trajectories of any two nearby points eventually diverge under time evolution.

Lorenz equations prove to be very resistant to rigorous mathematical analysis, from both conceptual (existence of the equilibrium accumulated by regular orbits prevents the attractor to be hyperbolic) as well numerical (solutions slow down as they pass near the equilibrium, which means unbounded return times and, thus, unbounded integration errors) point of view. Based on numerical experiments, Lorenz conjectured that the flow
generated by the equations (\ref{e.Lorenz}) presents a volume zero chaotic attractor  which is robust (it persists under small perturbation of the parameters). This attractor is called a {\em{Lorenz attractor}}; it
 has a  butterfly shape and displays  extremely rich dynamical properties.
It is robust in the sense that every nearby flow also possesses  an attractor with similar properties. Part of the reason for the richness of the Lorenz attractor is the fact that it has an {\em equilibrium or singularity}, i.e., a  point where the vector field vanishes, that is accumulated by regular orbits (orbits through points where the corresponding vector field does not vanish) which prevents the flow from being uniformly hyperbolic.

In the seventies, a geometric Lorenz model for this attractor was proposed in ~\cite{G76, ABS, GW79}. These models are flows in three dimension for which one can rigorously prove the existence of a chaotic attractor that contains an equilibrium point of the flow, which is an accumulation point of typical regular solutions.

Finally, the above stated existence of a chaotic attractor for the original Lorenz system was not proved until the year 2000, when Tucker did so with a computer-aided proof ~\cite{T99, T02}.

In order to describe the hyperbolicity of a Lorenz  flow, or more generally, invariant sets for a three-dimensional flow that contains equilibria, Morales, Pacifico and Pujals~\cite{MPP99} proposed the notion of {\em singular hyperbolicity}, which requires the flow to have a one-dimensional uniformly contracting direction, and a two-dimensional sub-bundle containing the flow direction of regular points, on which the flow is volume expanding. 
It is shown in~\cite{MPP04} that every robust attractor of a three-dimensional flow must be singular hyperbolic. Later, Ara\'ujo {\em et al} proved in~\cite{APPV} that every singular hyperbolic attractor for $3$-flows is expansive. The key technique in their proof is the linearization near singularities, and being an attractor allows them to take a collection of cross sections, thus reduce the flow to a two-dimensional return map. 

The notion of singular hyperbolicity was later generalized to {sectional hyperbolicity} for high dimensional flows, see~\cite{LGW, MM}. 
Roughly speaking, a compact invariant set $\Lambda$ for a flow $\phi_t$  is sectional hyperbolic if the tangent bundle at $\Lambda$ splits into two complementary directions $E^ s \oplus F^{cu}$ with  $E^ s$ uniformly contracting, dominates $F^{cu}$  and for any subspace $V \subset F^{cu}$ with dimension greater or equal two, the derivative  $D\phi_t$ restricted to $V$ is volume expanding, see Definition \ref{d.sectionalhyperbolic} below.
Initial results in the theory of sectional-hyperbolic flows were obtained in \cite{M07}. Nowadays there already exists a relatively good advance in this theory and we recommend the interested reader to~\cite{BM15} and the references therein.

Despite all these progress, it is worth mentioning that
up to now, the classical method to deal with flows presenting equilibria accumulated in a robust way by regular orbits, still highly relies on the construction of  cross sections and on the analysis of the 
associated Poincar\'e map, thus limiting its usage to dimension three.
In this paper, we introduce a new method to study sectional hyperbolic flows, in particular to study flows presenting equilibria accumulated by regular orbits, which is based on a new development of entropy theory in~\cite{LVY}. This new method
provides a breakthrough on the analysis of such class of flows on any dimension, giving a fruitful improvement in the actual knowledge and suggests   a new method: using entropy theory,  to study flows in general.

For simplicity, we will assume for now that all the singularities are hyperbolic. However, this assumption is not essential and can be removed with only a slight modification to the proof. We will deal with the case of  non-hyperbolic singularities in the appendix. See Theorems~\ref{m.h-expansive.non-hyperbolic} and~\ref{m.positiveentropy.non-hyperbolic} in the Appendix.
Furthermore, our work shows that the presence of equilibria, unlike in the topological theory for flows where they form an essential obstruction, turns out to be quite beneficial in the entropy theory.  This is reflected by the proof of Theorem~\ref{m.h-expansive}.

In order to announce our results in a precise way, let us introduce some definitions needed to comprehend the text.

\begin{definition}\label{d.sectionalhyperbolic}
A compact invariant set $\Lambda$ of a flow $X$ is called {\em sectional hyperbolic}, if it admits a dominated splitting $E^s\oplus F^{cu}$, such that $E^s$ is uniformly contracting, and $F^{cu}$ is {\em sectional-expanding:}  there are constants $C,\lambda>0$ such that for every $x\in\Lambda$ and any subspace $V_x\subset F^{cu}_x$ with $\dim V_x \ge 2$, we have 
$$
|\det D\phi_t(x)|_{V_x}| \ge C e^{\lambda t} \mbox{ for all } t>0.
$$
We  call $\lambda$ the {\em sectional volume expanding rate on $F^{cu}$}. 
\end{definition}

\begin{definition}
For $\varepsilon>0, T>0$, a finite sequence $\{x_i\}_{i=0}^n$ is called an {\em $(\varepsilon, T)$-chain} if there exists $\{t_i\}_{i=0}^{n-1}$ such that $t_i>T$ and $d(\phi_{t_i}(x_i),x_{i+1})<\varepsilon$  for all $i=0,\ldots, n-1$. We say that $y$ is {\em chain attainable from $x$}, if there exists $T>0$ such that for all $\varepsilon>0$, there exists an $(\varepsilon, T)$-chain $\{x_i\}_{i=0}^n$ with $x_0=x$ and $x_n=y$. It is straightforward to check that chain attainability is an equivalent relation on the set ${\rm CR}(X) = \overline{\{x: \forall t>0, \mbox{ there is an } (\varepsilon, T)\mbox{-chain with } x_0 = x_n = x\mbox{ and }\sum_i t_i > t\}}$. Each equivalent class under this relation is then called a {\em chain recurrent class}. A chain recurrent class $C$ is  {\em non-trivial} if it is not a singularity or a periodic orbit.
\end{definition}

\begin{definition}\label{d.lorenzclass}
A compact invariant set $\Lambda$ is a {\em Lorenz-like class}, if it contains both singularities and regular points, and satisfies the following conditions:
\begin{enumerate}%[label=(\alph*)]
\item $\Lambda$ is a chain recurrent class.
\item $\Lambda$ is {\em Lyapunov stable}, i.e., there is a sequence of compact neighborhoods
$\{U_i\}$ such that:
\begin{itemize}
\item $U_1 \supset U_2\supset\cdots$ and $\bigcap_iU_i = \Lambda$;
\item for each $i \ge 1$, $\phi_t(U_{i+1}) \subset U_i$ for any $t \ge 0$.
\end{itemize}
\item $\Lambda$ is  sectional-hyperbolic.
\end{enumerate}
\end{definition}
Recall that a $C^1$ flow is a {\em star flow} if for any nearby flow, its critical elements,
i.e., singularities and periodic orbits, are all hyperbolic.
Recently, it is proven in~\cite{GSW} that for generic star flows, every non-trivial Lyapunov stable chain recurrent class is Lorenz-like.

Note that in the definition of a Lorenz-like class we \textbf{neither} assume the singularities to be hyperbolic \textbf{nor} the class to be isolated. As a result, tools developed for three-dimensional singular hyperbolic attractors (see~\cite{ArPa10}) become invalid for higher dimensional sectional hyperbolic flows: non-isolated invariant set makes it difficult to take cross sections, and non-hyperbolic singularities make linearization impossible. Also recall that even when the singularities are hyperbolic, one still need extra assumptions on the eigenvalues of the singularity and sufficient smoothness in order to use linearization. To overcome these difficulties, we will look at the time-one map of the flow, and use the fake foliations developed in~\cite{LVY} to studied the expanding property in a neighborhood of the singularities.

On the other hand, entropy theory for discrete and continuous time systems (without singularities) have been developed for over 50 years and is proven to be quite successful. The {\em entropy expansiveness}, first introduced by Bowen in~\cite{B72}, is one of the key reasons that hyperbolic systems have nice properties in regard to entropy perspective. In particular, Bowen proved that if the system is robust entropy expansive, then the metric entropy is upper semi-continuous at the invariant measure, and the topological entropy varies upper semi-continuously with the system. As an important corollary, there always exists an equilibrium state (a measure whose metric pressure equals the topological pressure of the system) for every  continuous potential. In particular, there exists a measure of maximal entropy.

\begin{definition}\label{d.nearlyrobust}
Let $\Lambda$  be a  compact invariant set  for a $C^ 1$ flow $\phi_t$ and $\delta > 0$. Let $f=\phi_1$ be the time-one map of $X$.
%Let $ f:X \to X$ be a uniformly continuous map of a metric space. f is called h-expansive if there is an $ \varepsilon > 0$ so that the set $ {\Phi _\varepsilon }(x) = \{ y:d({f^n}(x),{f^n}(y)) \leqq \varepsilon $ for all $ n \geqq 0$} has zero topological entropy for each $ x \in X$
We say that $\phi_t$  is $h$-expansive or entropy expansive on $\Lambda$ at scale $\delta$, if %there is an $ \varepsilon > 0$ so that 
the set $ {B_\infty}(x,\delta) = \{ y:d({f^n}(x),{f^n}(y)) < \delta \mbox{ for all } n \in\ZZ\}$ (below we call it the $(\infty,\delta)$-Bowen ball at $x$) has zero topological entropy for each $ x \in \Lambda$. 
The map $\phi_t$ is robustly $\delta$-entropy expansive near $\Lambda$  if there is  a neighborhood $\cU$ of $\phi_t$ in the $C^1$ topology and a neighborhood $U$ of $\Lambda$, such that for every $\psi_t \in \cU$, the maximal invariant set of $\psi_t $ in $U$ is entropy expansive at scale $\delta$.
\end{definition}

\begin{main}\label{m.h-expansive}
Let $\Lambda$ be a compact invariant set that is sectional hyperbolic for a $C^1$ flow $\phi_t$, with all the singularities in $\Lambda$ hyperbolic. Then $\phi_t$ is robustly $\delta$-entropy expansive near $\Lambda$.
\end{main}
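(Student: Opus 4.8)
The plan is to reduce the problem to a statement about the time-one map $f=\phi_1$ and to show that for every $x\in\Lambda$ (or in a neighborhood of $\Lambda$ for a $C^1$-nearby flow) the $(\infty,\delta)$-Bowen ball $B_\infty(x,\delta)$ is contained in a single orbit segment of the flow of bounded length, plus possibly a piece of a strong-stable leaf; both have zero topological entropy, and a bounded union of such sets still has zero entropy. The key dichotomy is whether the base point $x$ spends all its time near a singularity or not. Away from the singularities, $\Lambda$ behaves like a (partially) hyperbolic set: the uniformly contracting bundle $E^s$ gives exponential contraction in the stable direction, and the sectional-expanding property of $F^{cu}$ forces expansion in \emph{all} two-dimensional (hence, after quotienting out the flow direction, all one-dimensional transverse) subdirections of $F^{cu}$, so two points that stay $\delta$-close under all iterates of $f$ must lie on the same strong-stable leaf through an orbit of $x$; standard Bowen-type arguments then finish this case.

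The genuinely new case is when the orbit of $x$ lingers near a singularity $\sigma$. Here I would \emph{not} linearize (the paper stresses that $\sigma$ need not be hyperbolic and, even when it is, extra eigenvalue and smoothness assumptions would be needed). Instead, following~\cite{LVY}, I would build fake invariant foliations in a small neighborhood of $\sigma$ subordinate to the dominated splitting $E^s\oplus F^{cu}$, i.e.\ locally invariant plaque families $\cW^{s}$ tangent to $E^s$ and $\cW^{cu}$ tangent to $F^{cu}$ for the diffeomorphism $f$. The point is that the sectional-expanding hypothesis, expressed for the time-one map, says $|\det Df^n|_{V}|\ge Ce^{\lambda n}$ for every $2$-plane $V\subset F^{cu}$; since near a singularity the flow direction is nearly degenerate, this volume expansion must be realized by genuine expansion transverse to the flow inside the fake $\cW^{cu}$ plaques. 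Combined with the fact that the real flow direction, although shrinking near $\sigma$, is still a one-dimensional direction along which the dynamics is just a time shift, one concludes that any $y\in B_\infty(x,\delta)$ with $\delta$ small enough must lie in the $\cW^{s}$-plaque through the orbit of $x$: any transverse displacement inside $\cW^{cu}$ would be expanded by some $f^n$ until it exits the $\delta$-ball. Hence $B_\infty(x,\delta)$ is again contained in (a bounded piece of) a strong-stable leaf union an orbit segment, giving zero entropy.

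To make ``robustly $\delta$-entropy expansive'' precise I would fix $\delta$ small from the start, chosen uniformly using only: the contraction/domination constants of the splitting $E^s\oplus F^{cu}$, the sectional constants $C,\lambda$, and the size of the neighborhood of $\Sing(\Lambda)$ on which the fake foliations of~\cite{LVY} are available; all of these persist, with uniform constants, for $C^1$-nearby flows and for the maximal invariant set in a small neighborhood $U$ of $\Lambda$, because sectional hyperbolicity is a $C^1$-open condition (the dominated splitting and the uniform volume estimates are robust). So the same two-case argument applies verbatim to every $\psi_t$ near $\phi_t$ on its maximal invariant set in $U$, yielding the robustness.

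I expect the main obstacle to be the near-singularity case: one must control the fake $\cW^{cu}$-plaques through points whose forward and backward orbits repeatedly enter and leave the neighborhood of $\sigma$, and show that the sectional volume expansion — which a priori only bounds a \emph{determinant} on $2$-planes, not the norm of $Df$ in any single transverse direction — genuinely pushes transverse vectors out of the $\delta$-ball. The resolution is to use domination of $E^s$ over $F^{cu}$ together with the flow direction lying in $F^{cu}$: on the quotient $F^{cu}/\langle X\rangle$ the induced cocycle is volume expanding of the same rate, and in the neighborhood of $\sigma$ one additionally uses that the flow direction contributes a controlled (sub-exponential, or at worst comparable) factor, so the quotient expansion survives. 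Packaging this cleanly — i.e.\ stating a uniform ``transverse expansion'' lemma valid near singularities and proving it via the fake foliations — is where the real work lies; the rest is a careful but standard adaptation of Bowen's entropy-expansiveness argument from the hyperbolic to the sectional-hyperbolic setting.
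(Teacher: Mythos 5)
Your high-level strategy (time-one map, fake foliations from \cite{LVY}, no linearization near singularities, sectional expansion realized transversally to the flow inside the fake $cu$-plaques, robustness from $C^1$-openness of the hypotheses) is essentially the paper's. But there is one genuine gap and one imprecision.

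The gap: you propose to show that $B_\infty(x,\delta)$ is a small orbit segment \emph{for every} $x$ in (a neighborhood of) $\Lambda$. That cannot be done directly. The expansion you want to harvest near a singularity $\sigma$ is ``entry--exit'' expansion: an orbit segment that comes $\varepsilon$-close to $\sigma$ and then leaves the $\delta_0$-ball again picks up a definite factor $L>1$ (the paper's Lemma~\ref{l.expansion.near}). But a point $x\in W^s(\sigma)\setminus\{\sigma\}$ has $f^n(x)\to\sigma$ and never exits, so you can never close the inductive expansion estimate in forward time, and you cannot conclude that the transverse displacement inside the $cu$-plaque vanishes. The paper avoids this by proving the strictly weaker Theorem~\ref{m.almostexpansive}: the Bowen ball is a short orbit segment for $\mu$-a.e.\ $x$, for every ergodic invariant $\mu$. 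Typical points of $\mu$ recur, so their orbits enter and leave the singular neighborhood infinitely often, and the expansion estimate can be iterated. One then invokes \cite[Proposition~2.4]{LVY} (Lemma~\ref{l.almostexpansive} here): $\varepsilon$-almost entropy expansive is equivalent to $\varepsilon$-entropy expansive. This reduction from ``every point'' to ``a.e.\ point for every ergodic measure'' is not cosmetic; it is what makes the near-singularity case tractable, and it is missing from your plan.

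A secondary imprecision: you allow $B_\infty(x,\delta)$ to contain ``a piece of a strong-stable leaf.'' For the two-sided infinite Bowen ball that never happens. The first structural step (Lemma~\ref{l.bowenball}) shows $B_\infty(x,r_0)\subset\cF^{cu}_x(x)$: backward iteration expands the stable direction, so the stable component of any $y\in B_\infty(x,r_0)$ must vanish. What remains is a subset of the $cu$-plaque, and the whole near/far-from-singularity dichotomy is then about showing its transverse-to-flow component is trivial. Stating this cleanly matters because it tells you exactly what has to be expanded.

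Finally, a small but essential technical device you should make explicit: the family $\cF^c_S$ coming from the finer splitting at $\Sing$ does not sub-foliate $\cF^{cu}$, so the paper builds the blended center family $\bcf^c_x(z)=\cF^{cs}_S(z)\cap\cF^{cu}_x(x)$ and an $f$-invariant local product structure $[\,\cdot,\cdot\,]$ inside $\cF^{cu}_x(x)$; the ``transverse expansion near $\sigma$'' you want is proved by splitting $l=l^c\cup l^u$ in these coordinates and using, respectively, sectional (area) expansion for the $c$-piece and uniform $E^u$-contraction of $f^{-1}$ for the $u$-piece. Without that local product structure your sketch of ``sectional volume expansion implies transverse expansion'' is not yet an argument.
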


We also obtain continuity of the topological entropy for sectional hyperbolic invariant sets:

\begin{main}\label{m.continuity}
Let $\Lambda$ be a sectional hyperbolic compact invariant set for a $C^1$ flow $\phi_t$, with all singularities in $\Lambda$ hyperbolic. Then there is a neighborhood $U$ of $\Lambda$, such that $h_{top}(\cdot|_{\tilde{\Lambda}})$ is continuous at $\phi_t$, where $\tilde{\Lambda}$ is the maximal invariant set in $U$. More precisely, let $\phi^n_t$ be a sequence of $C^1$ flows with $\phi^n_t \xrightarrow{n\to\infty} \phi_t$  in $C^1$ topology, and denote by $\tilde{\Lambda}_n$ the maximal invariant set of $\phi^n_t$ in $U$, then
$$
\lim_{n\to\infty}h_{top}(\phi^n_t|_{\tilde{\Lambda}_n}) = h_{top}(\phi_t|_{\tilde{\Lambda}}).
$$
Furthermore, if $h_{top}(\phi_t|_\Lambda)>0$ then there are periodic orbits arbitrarily close to $\Lambda$. When $\Lambda$ is a chain recurrent class, such periodic orbits are indeed contained in $\Lambda$.
\end{main}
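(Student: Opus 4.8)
The plan is to derive all three assertions from Theorem~\ref{m.h-expansive}, working throughout with the time-one map $f=\phi_1$ and using $h_{top}(\phi_t|_K)=h_{top}(f|_K)$ for every compact invariant set $K$. First I would shrink $U$ so that, by $C^1$-openness of sectional hyperbolicity, the maximal invariant set in $U$ of every flow in a $C^1$-neighborhood $\cU$ of $\phi_t$ remains sectional hyperbolic, and so that Theorem~\ref{m.h-expansive} applies on all of these sets at one common scale $\delta$. If $\phi^n_t\to\phi_t$ in $C^1$, the maximal invariant sets $\tilde\Lambda_n$ eventually lie in $U$ and depend upper-semicontinuously on $n$; since each $\phi^n_1|_{\tilde\Lambda_n}$ is entropy expansive at the fixed scale $\delta$ (equivalently, has vanishing tail entropy with a uniform constant), the Misiurewicz-type upper-semicontinuity argument yields
\[
\limsup_{n\to\infty}h_{top}(\phi^n_t|_{\tilde\Lambda_n})\le h_{top}(\phi_t|_{\tilde\Lambda}).
\]

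For the reverse inequality --- and for the last two assertions --- the crux is the claim that \emph{any ergodic measure $\mu$ carried by a sectional hyperbolic set with $h_\mu>0$ is hyperbolic and gives no mass to $\Sing$}. Indeed $\Sing$ carries zero entropy, so ergodicity and $h_\mu>0$ force $\mu(\Sing)=0$ and $\mu$-a.e.\ point regular; along $E^s$ the exponents are uniformly negative, the flow direction has exponent $0$, and any Oseledets direction inside $F^{cu}$ other than the flow direction must have a strictly positive exponent, for otherwise the $2$-plane it spans with the flow direction would have nonpositive volume growth, contradicting sectional expansion. Hence the Oseledets splitting of $\mu$ refines $E^s\oplus F^{cu}$ and is hyperbolic, the only zero exponent being along the flow. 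Granting this, if $h_{top}(\phi_t|_{\tilde\Lambda})>0$ then by the variational principle there is an ergodic $\mu$ on $\tilde\Lambda$ with $h_\mu$ as close as desired to $h_{top}(\phi_t|_{\tilde\Lambda})$; applying Liao's shadowing lemma to the linear Poincar\'e flow along the Pliss times of $\mu$ --- this is where the dominated splitting substitutes for Pesin theory in class $C^1$, Katok's horseshoe theorem doing the same in higher regularity --- produces a basic hyperbolic set $\Gamma\subset U$ of $\phi_t$ with $h_{top}(\phi_t|_\Gamma)>h_\mu-\varepsilon$. Since basic sets persist, topologically conjugate, under $C^1$ perturbation, $\Gamma$ continues to a set $\Gamma_n\subset\tilde\Lambda_n$ of $\phi^n_t$ with the same entropy, so $\liminf_n h_{top}(\phi^n_t|_{\tilde\Lambda_n})\ge h_{top}(\phi_t|_{\tilde\Lambda})-\varepsilon$ for every $\varepsilon$; the continuity statement follows.

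If $h_{top}(\phi_t|_\Lambda)>0$, I would run this construction directly on $\Lambda$: the variational principle gives an ergodic $\mu$ on $\Lambda$ with $h_\mu>0$, which by the claim is hyperbolic with $\mu(\Sing)=0$, and Liao's shadowing lemma then furnishes, for every $\varepsilon>0$, a periodic orbit $\gamma_\varepsilon$ of $\phi_t$ that $\varepsilon$-shadows a long orbit segment of a $\mu$-generic point, so $\gamma_\varepsilon$ lies in the $\varepsilon$-neighborhood of $\supp\mu\subset\Lambda$. When $\Lambda$ is moreover a chain recurrent class, $\gamma_\varepsilon$ is not merely near $\Lambda$: it $\varepsilon$-shadows an orbit segment contained in the chain transitive set $\Lambda$ and is itself chain recurrent, so splicing one short flow step off $\gamma_\varepsilon$ into $\Lambda$, a chain inside $\Lambda$, and a long return along $\gamma_\varepsilon$ exhibits each point of $\gamma_\varepsilon$ as chain bi-attainable, with a uniform jump time, from any point of $\Lambda$; hence $\gamma_\varepsilon$ belongs to the chain recurrent class of that point, which is $\Lambda$.

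The main obstacle is the hyperbolicity claim, which is precisely where sectional hyperbolicity is used in an essential way: sectional expansion controls Jacobians of $2$-planes rather than individual exponents, and only its interplay with the neutral flow direction and the domination $E^s\oplus F^{cu}$ rules out neutral or weakly contracting directions inside $F^{cu}$ transverse to the flow. A secondary, $C^1$-specific difficulty is extracting genuine horseshoes and periodic orbits from such a hyperbolic measure, and there the dominated splitting --- via Liao's shadowing lemma --- is what rescues the argument in the absence of Pesin theory.
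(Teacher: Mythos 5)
Your proposal follows essentially the same route as the paper: upper semicontinuity from the robust entropy expansiveness of Theorem~\ref{m.h-expansive} (the paper invokes this through Lemma~\ref{l.uppersemicontinuity}); lower semicontinuity by extracting a high-entropy ergodic measure, observing that sectional hyperbolicity forces it to be hyperbolic with no mass on singularities, running Liao's shadowing lemma along quasi-hyperbolic times (the paper's Lemmas~\ref{l.C1pesin} and~\ref{l.shadowing}, packaged as Theorem~\ref{t.horseshoe}) to produce a horseshoe away from $\Sing(X)$, and using persistence of hyperbolic sets; and then the periodic-orbit and chain-recurrent-class statements by the same shadowing construction. One small inaccuracy: hyperbolic basic sets of flows persist up to orbit-equivalence, not topological conjugacy, and orbit-equivalence does not preserve entropy in general --- but since the time-change tends to the identity as the perturbation vanishes, the entropy of the continuation still tends to that of $\Gamma$, which is all you (and the paper) actually need. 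The paper also handles the ``$p\in\Lambda$'' conclusion by folding it into the shadowing lemma (item (f) of Lemma~\ref{l.shadowing}) rather than by a separate chain-splicing argument; your direct argument is morally the same, though to make the $(\varepsilon,T)$-chain rigorous you would need to spell out that the jump times can be forced to exceed any prescribed $T$ by flowing around $\gamma_\varepsilon$ sufficiently many times before jumping.
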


Note that in Theorems~\ref{m.h-expansive} and~\ref{m.continuity}, we do not need $\Lambda$ to be Lyapunov stable. If $\Lambda$ is Lyapunov stable, then we get positive topological entropy:

\begin{main}\label{m.positiveentropy}
Let $\Lambda$ be a compact invariant set that is sectional hyperbolic for a $C^1$ flow $\phi_t$, with all the singularities in $\Lambda$ hyperbolic. Furthermore, assume that $\Lambda$ is  Lyapunov stable. Then  $h_{top}(\phi_t|_\Lambda)\ge \lambda>0$, where $\lambda$ is the sectional volume expanding rate on $F^{cu}$.
\end{main}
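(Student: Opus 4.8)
The plan is to use sectional expansion to build a two-dimensional disk whose area grows exponentially at rate $\lambda$, keep its forward orbit trapped near $\Lambda$ by Lyapunov stability, convert the resulting exponential area growth into exponentially many $(n,\delta)$-separated orbits, and finally transfer this lower bound from a neighborhood of $\Lambda$ to $\Lambda$ itself using the entropy expansiveness of Theorem~\ref{m.h-expansive}. Throughout we work with the time-one map $f=\phi_1$, so that $h_{top}(f|_\Lambda)=h_{top}(\phi_t|_\Lambda)$. For the geometric setup, extend $F^{cu}$ to a continuous $D\phi_t$-invariant cone field $\cC^{cu}$ on a small neighborhood $U_0$ of $\Lambda$. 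Shrinking $U_0$ and narrowing the cone, Definition~\ref{d.sectionalhyperbolic} together with the continuity of $D\phi_t$ gives, for every small $\varepsilon>0$, that any $C^1$ two-disk $D\subset U_0$ with $T_yD\in\cC^{cu}$ for all $y\in D$ and with $\phi_{[0,t]}(D)\subset U_0$ satisfies $T_y\phi_t(D)\in\cC^{cu}$ and $\Area(\phi_t(D))\ge C\,e^{(\lambda-\varepsilon)t}\Area(D)$; one also records a bounded-distortion estimate for the area cocycle $y\mapsto |\det D\phi_t(y)|_{T_yD}|$ along such disks. Near the singularities the naive tangency to $F^{cu}$ degenerates, and here I would use the fake $cu$-foliations from~\cite{LVY}, already exploited in the proof of Theorem~\ref{m.h-expansive}, to organize these disks into flow boxes.

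\emph{The trapped disk.} First note $\Lambda$ contains a regular point: otherwise a short argument from Lyapunov stability and the volume expansion on $F^{cu}$ leads to a contradiction (for a singularity $\sigma\in\Lambda$, hyperbolicity together with expansion of $F^{cu}_\sigma$ forces $W^u(\sigma)\neq\{\sigma\}$, and $W^u(\sigma)\subset\Lambda$ since running $y\in W^u_{loc}(\sigma)$ backwards into $U_{i+1}$ and applying $\phi_t(U_{i+1})\subset U_i$ shows $y\in\bigcap_iU_i=\Lambda$). Fix a small index $i$ and a $C^1$ two-disk $D\subset U_{i+1}$ through a regular point of $\Lambda$ with $T_yD\in\cC^{cu}$ (when some $\sigma\in\Lambda$ has $\dim W^u(\sigma)\ge 2$ one may instead take $D$ inside $W^u(\sigma)\subset\Lambda$ directly). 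By Lyapunov stability $\phi_t(D)\subset U_i$ for all $t\ge 0$, and by the previous step $\Area(\phi_n(D))\ge C\,e^{(\lambda-\varepsilon)n}\Area(D)$, so $\phi_n(D)$ is a two-disk of exponentially growing area confined to the bounded set $U_i$.

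\emph{From area growth to entropy.} This is the crux, and I expect it to be the main obstacle. I claim $h_{top}(f|_{\operatorname{Inv}(U_i)})\ge\lambda-\varepsilon$, obtained by selecting inside $\phi_n(D)$ an $(n,\delta)$-separated set for $f$ of cardinality $\gtrsim e^{(\lambda-\varepsilon)n}$. The obstruction is folding: a priori $\phi_n(D)$ could wind many times through a tiny region, so its area need not witness dynamical complexity. I would control this with the fake $cu$-foliation inside a fixed finite family of flow boxes covering $U_i$: in each box the connected $cu$-plaques of $\phi_n(D)$ are graphs over a fixed $cu$-plane, uniformly transverse to the complementary $E^s\oplus\langle X\rangle$ directions, so two plaques that are $\delta$-apart in the transverse direction are $\delta$-apart as sets. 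Pulling these plaques back by $\phi_n$ and using the area bound together with bounded distortion forces $\gtrsim e^{(\lambda-\varepsilon)n}$ plaques whose $\phi_n$-preimages in $D$ are pairwise $(n,\delta)$-separated — any two such preimages either separate at some intermediate time or land in distinct transverse positions at time $n$. Making this quantitative, i.e. turning ``large area together with no folding inside a box'' into an honest separated count uniform in $n$, is where the real work lies.

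\emph{Passing to $\Lambda$.} Since $\operatorname{Inv}(U_i)$ decreases to $\bigcap_iU_i=\Lambda$ and, by Theorem~\ref{m.h-expansive}, $f$ is entropy expansive at a fixed scale $\delta$ on $\Lambda$ and on every $\operatorname{Inv}(U_i)$, the $\delta$-scale entropies, hence the topological entropies, are upper semicontinuous along this nested sequence, so $\limsup_i h_{top}(f|_{\operatorname{Inv}(U_i)})\le h_{top}(f|_\Lambda)$. Combining with the previous step and letting $\varepsilon\to 0$ yields $h_{top}(\phi_t|_\Lambda)\ge\lambda>0$. Alternatively one can run the middle two steps measure-theoretically: time-averaging the normalized pushforwards $(\phi_j)_*(\Area|_D/\Area(D))$ produces an invariant measure $\mu$ carried by $\Lambda$, and the fake $cu$-foliation yields a Pesin-type lower bound $h_\mu(f)\ge\int\log|\det Df|_{F^{cu}}|\,d\mu\ge\lambda$.
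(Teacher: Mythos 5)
Your overall strategy lines up with the paper's: use Lyapunov stability to trap a disk tangent to the unstable cone inside a shrinking neighborhood of $\Lambda$, exploit sectional expansion to force exponential volume growth, convert that growth into a lower bound on topological entropy in the neighborhood, and transfer the bound to $\Lambda$ via entropy expansiveness. Where you diverge, and where your sketch has a genuine gap, is the step you yourself flag as ``the crux.''

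You worry about folding and propose an elaborate flow-box argument to count separated plaques. The paper's Theorem~\ref{t.positiveentropy} renders this entirely unnecessary, and the mechanism is worth internalizing: rather than trying to \emph{produce} a separated set inside $\phi_n(D)$, one bounds the cardinality of any $(n,\varepsilon)$-\emph{spanning} set $\Gamma_n$ of $D$ from \emph{below} by the volume of $\phi_n(D)$. The key input is Lemma~\ref{l.boundedvolumebowen}: for a disk $D$ of dimension $\dim F$ tangent to the $F$-cone, and any $x \in D$, the set $g^n(D \cap B_n(x,\varepsilon))$ is a disk of bounded diameter still tangent to a local $F$-cone (by the invariance of the cone field, Lemma~\ref{l.invariantcone}), and therefore has volume at most a uniform constant $K$ (Lemma~\ref{l.boundedvolume}). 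Summing over a spanning set gives $\Leb(\phi_n(D)) \le K\, r_n(D,\varepsilon)$, hence $r(D,\varepsilon) \ge v_F(D)$. Folding never enters: whether $\phi_n(D)$ winds around many times is irrelevant because you are bounding the volume of each individual Bowen-ball image, not trying to recognize distinct sheets. This is the idea your sketch is missing, and it is what makes the argument go through cleanly at $C^1$ regularity.

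Two smaller points. First, the paper takes $D$ to be a disk of full dimension $\dim F^{cu}$, not a two-disk; the bounded-volume lemma and the cone-invariance arguments are calibrated to that dimension. Your two-disk variant would need a separate justification that the $(n,\varepsilon)$-spanning count of $D$ controls area after iteration, and the bounded-volume step is no longer immediate when $\dim D < \dim F$. Second, your final ``alternative'' via a Pesin-type inequality $h_\mu(f) \ge \int \log|\det Df|_{F^{cu}}|\,d\mu$ is not available here: such entropy lower bounds rely on Ledrappier--Young-type machinery, which requires $C^{1+\alpha}$ regularity, whereas this theorem is stated for $C^1$ flows. The paper instead obtains measures $\mu_i$ in $U_i$ with $h_{\mu_i}\ge\lambda - 1/i$ from the variational principle, passes to a weak-$*$ limit $\mu_0$ supported on $\Lambda$, and uses upper semicontinuity of metric entropy (a consequence of Theorem~\ref{m.h-expansive}) to conclude $h_{\mu_0}\ge\lambda$; this route stays within what $C^1$ affords. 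Your direct ``$\limsup_i h_{top}(f|_{\operatorname{Inv}(U_i)}) \le h_{top}(f|_\Lambda)$'' is morally the same but the paper's measure-theoretic phrasing is the one that actually follows from Bowen's results as cited.
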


We apply the previous theorem to Lorenz-like classes and obtain the following corollary:

\begin{maincor}\label{mc.robusthexpansive}
Let $\Lambda$ be a Lorenz-like class for a $C^1$ flow $\phi_t$ with all singularities hyperbolic.  Then  $\Lambda$ is robustly $\delta$-entropy expansive, has positive topological entropy, and contains a periodic orbit.
\end{maincor}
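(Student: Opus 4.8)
The plan is to obtain all three conclusions as immediate consequences of Theorems~\ref{m.h-expansive},~\ref{m.continuity} and~\ref{m.positiveentropy}, after observing that a Lorenz-like class meets each of their hypotheses. So let $\Lambda$ be a Lorenz-like class for $\phi_t$ with all singularities hyperbolic, and recall the three clauses of Definition~\ref{d.lorenzclass}: $\Lambda$ is a chain recurrent class; $\Lambda$ is Lyapunov stable; and $\Lambda$ is sectional hyperbolic.

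For robust $\delta$-entropy expansiveness I would simply invoke Theorem~\ref{m.h-expansive}: clause (3) of Definition~\ref{d.lorenzclass} supplies exactly the hypothesis (sectional hyperbolicity with hyperbolic singularities), and the conclusion of Theorem~\ref{m.h-expansive} is the assertion to be proved.

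For positive topological entropy I would combine clauses (2) and (3): sectional hyperbolicity together with Lyapunov stability places us under the hypothesis of Theorem~\ref{m.positiveentropy}, which gives $h_{top}(\phi_t|_\Lambda)\ge\lambda>0$, where $\lambda$ is the sectional volume expanding rate on $F^{cu}$ from Definition~\ref{d.sectionalhyperbolic}. In particular $h_{top}(\phi_t|_\Lambda)>0$, which is precisely the hypothesis in the last part of Theorem~\ref{m.continuity}; since by clause (1) the set $\Lambda$ is a chain recurrent class, that theorem then produces periodic orbits contained in $\Lambda$. This yields the third conclusion and completes the deduction.

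I expect no genuine obstacle at the level of the corollary itself: the whole weight of the argument has been front-loaded into Theorems~\ref{m.h-expansive}--\ref{m.positiveentropy}. The only thing to be careful about is matching vocabulary — that ``Lyapunov stable'' in Definition~\ref{d.lorenzclass}(2) is the notion used in Theorem~\ref{m.positiveentropy}, and that being a ``chain recurrent class'' is exactly what allows Theorem~\ref{m.continuity} to upgrade ``periodic orbits near $\Lambda$'' to ``periodic orbits inside $\Lambda$''. For completeness I note that the periodic orbit could also be obtained without Theorem~\ref{m.continuity}: an entropy-expansive flow with positive topological entropy carries an ergodic measure of maximal entropy; this measure is hyperbolic (its exponents along $E^s$ are negative, while sectional expansion forces a positive exponent along $F^{cu}$ once the flow direction is quotiented out), and a standard closing argument inside $\Lambda$ then produces a periodic orbit. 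Routing through Theorem~\ref{m.continuity} is, however, shorter and avoids re-proving a closing lemma.
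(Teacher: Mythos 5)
Your proposal is correct and follows the same route as the paper, which likewise derives the corollary by combining Theorem~\ref{m.h-expansive} (for robust $\delta$-entropy expansiveness), Theorem~\ref{m.positiveentropy} (for positive entropy via Lyapunov stability plus sectional hyperbolicity), and the final clause of Theorem~\ref{m.continuity} together with Lemma~\ref{l.shadowing}(f) (for the periodic orbit inside the chain recurrent class). The paper cites Lemma~\ref{l.shadowing}(f) explicitly, but that reference is already internal to the last assertion of Theorem~\ref{m.continuity}, so your deduction is the same argument.
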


Recall that a property is  $C^1$ generic if it holds on a residual set under $C^1$ topology. With the help of the periodic orbit obtained in Corollary~\ref{mc.robusthexpansive}, we show that Lorenz-like classes are indeed attractors.

\begin{maincor}\label{mc.attractor}
$C^1$ generically, every Lorenz-like class is an attractor and contains a periodic orbit.
\end{maincor}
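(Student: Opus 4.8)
The plan is to use standard $C^1$-generic arguments to reduce to the case where the Lorenz-like class $C$ equals the closure of the unstable manifold of one of its periodic orbits, and then to use the uniformly contracting bundle $E^s$ to build a trapping region for $C$.

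First I would fix a residual set $\cG$ of $C^1$ flows on which: all critical elements are hyperbolic with pairwise transverse invariant manifolds (Kupka--Smale); for every periodic orbit $\gamma$, the chain recurrent class containing $\gamma$ coincides with the homoclinic class $H(\gamma)$ (a flow analogue of the Bonatti--Crovisier genericity theorem, via the connecting lemma for flows); and every non-trivial chain recurrent class that carries a sectional hyperbolic splitting contains a periodic orbit (which follows from the local star property of sectional hyperbolicity). I would also use the (non-generic) fact that sectional hyperbolicity is robust: there are a neighborhood $U$ of $C$ and a $C^1$-neighborhood of $\phi_t$ such that the maximal invariant set of any nearby flow inside $U$ is sectional hyperbolic, with a dominated splitting $E^s\oplus F^{cu}$ and a strong stable foliation $\cW^{ss}$ tangent to $E^s$, both defined on $U$ and depending continuously on the flow.

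Now fix $\phi_t\in\cG$ and a Lorenz-like class $C$. By Corollary~\ref{mc.robusthexpansive}, $C$ contains a periodic orbit $\Orb(p)$, and by the choice of $\cG$ we get $C=H(p)$. Since $C$ is Lyapunov stable, taking a compact local unstable disk $W^u_{\mathrm{loc}}(\Orb p)$ inside an arbitrarily small attracting neighborhood of $C$ and using the identity $W^u(\Orb p)=\bigcup_{t\ge 0}\phi_t\big(W^u_{\mathrm{loc}}(\Orb p)\big)$, one obtains $\overline{W^u(\Orb p)}\subset C$; together with the general inclusion $H(p)\subset\overline{W^u(\Orb p)}$ this yields $C=\overline{W^u(\Orb p)}$. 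Saturating $W^u_{\mathrm{loc}}(\Orb p)$ by local $\cW^{ss}$-plaques and then flowing forward produces an open, forward invariant set $\Phi$ with $W^u(\Orb p)\subset\Phi$; since $\cW^{ss}$-plaques are uniformly contracted in forward time and are rooted at points of the invariant set $C$, every orbit in $\Phi$ has $\omega$-limit set contained in $C$, that is $\Phi\subset W^s(C)$. Thus $W^s(C)$ contains an open, forward invariant set accumulating on all of $C$.

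It remains to prove that $C$ is locally maximal; this is where I expect the main obstacle. Granting it, Lyapunov stability lets one shrink an attracting neighborhood to a genuine trapping region whose maximal invariant set is exactly $C$, so $C$ is a topological attractor; the periodic orbit demanded by the statement is the one from Corollary~\ref{mc.robusthexpansive}, and genericity comes from $\cG$. I would argue by contradiction: if $C$ is not locally maximal, then for the attracting neighborhoods $U_n$ with $\bigcap_n U_n=C$ coming from Lyapunov stability there are, for all large $n$, points of $U_n$ outside $C$ whose $\omega$- or $\alpha$-limit lies in a chain recurrent class $C_n'\ne C$ of $\phi_t$ with $C_n'\subset U_n$; each $C_n'$ is sectional hyperbolic. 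It cannot be a singularity, since hyperbolic singularities are isolated in $\Sing(X)$, so $C_n'=\{\sigma\}$ would force $\sigma\in\bigcap_n U_n=C$, contradicting that $C$, having regular points, is not $\{\sigma\}$. Hence, by the choice of $\cG$, $C_n'$ contains a periodic orbit $\Orb(q_n)$ of index $\dim E^s$, and $\Orb(q_n)$ accumulates, in the Hausdorff metric, on a subset of $C$. The remaining task is to rule this out. One route is a connecting-lemma perturbation, combined with Lyapunov stability and the density of $W^u(\Orb p)$ in $C$, producing $(\varepsilon,T)$-chains joining $\Orb(q_n)$ and $C$ in both directions; then $C\cup\Orb(q_n)$ is chain transitive, contradicting that $C$ is a full chain recurrent class. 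An alternative route is to invoke a spectral-decomposition statement — finitely many chain recurrent classes — for sectional hyperbolic attracting sets, which would force $\{C_n'\}$ to take finitely many values, so that $C_n'\subset\bigcap_n U_n=C$ for some $n$, again a contradiction. Excluding recurrence that accumulates on $C$ from the attracting side is where the real work lies.
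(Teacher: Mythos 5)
Your reduction is sound and mirrors the paper through the first half: use genericity (Kupka--Smale, chain recurrent class equals homoclinic class), use Corollary~\ref{mc.robusthexpansive} (really Theorem~\ref{m.positiveentropy} plus Theorem~\ref{t.horseshoe} and Lemma~\ref{l.shadowing}) to find a periodic orbit $p$ with $C=H(p)$, note that Lyapunov stability gives $C=\overline{W^u(\Orb p)}$, and then reduce everything to showing $C$ is isolated. The construction of $\Phi$ via $\cW^{ss}$-saturation is unnecessary once you have that reduction, but it does no harm.

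The genuine gap is exactly where you flag it. You reach the situation ``there are chain recurrent classes $C_n'\ne C$ accumulating on $C$, each containing a hyperbolic periodic orbit $\Orb(q_n)$ of the correct index,'' and then wave at two possible ways out, neither of which you carry out. Route 2 (a spectral decomposition giving finitely many chain classes for sectional hyperbolic attracting sets) is not available: such a decomposition is not established in the paper, and proving it generically is comparable in difficulty to the corollary itself. Route 1 (connecting-lemma chains) is the right idea, but ``producing $(\varepsilon,T)$-chains in both directions'' is precisely the content that needs an argument. You would have to say \emph{why} the connecting lemma applies to join $\Orb(q_n)$ to $C$; with no geometric information about how $\Orb(q_n)$ approaches $C$, there is nothing to perturb.

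The paper's argument closes this by extracting the extra geometric structure that your outline leaves implicit. Let $\Lambda_0$ be the Hausdorff limit of $\Orb(q_n)$. First dichotomy: if $\Lambda_0$ contains no singularity, then $\Lambda_0$ is a hyperbolic subset of $C$ and, for $n$ large, the hyperbolic sets $C_n'$ and $\Lambda_0$ are homoclinically related, forcing $C_n'=C$ --- contradiction. So $\Lambda_0$ contains a singularity $\sigma\in C$. By Lemma~\ref{l.domination.singularity}, $\sigma$ carries a splitting $E^s\oplus E^c\oplus E^u$ with $\dim E^c=1$, and the strong stable manifold $W^s(\sigma)$ splits $W^{cs}(\sigma)$ into two branches. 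Tracking the last exit times of $\Orb(q_n)$ from a small ball around $\sigma$ shows that $\Orb(q_n)$ accumulates on a point $z\in W^{cs,+}(\sigma)\setminus\{\sigma\}$ (say). Proposition~\ref{p.generic} (connecting lemma) generically gives a point $a\in W^{cs,+}(\sigma)\cap W^u(\Orb p)$, and the $\lambda$-lemma applied to a small disk $D\ni a$ in $W^u(\Orb p)$ produces a $\dim F^{cu}$-dimensional submanifold inside $\Lambda$ that the stable manifolds $W^s(z^n)$ must cross transversally. Hence $\omega(q_n)\subset C$, which makes $C_n'$ and $C$ chain-equivalent --- contradiction. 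This concrete use of the singularity, the branch dichotomy from Proposition~\ref{p.generic}, and the $\lambda$-lemma is the content missing from your sketch.
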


Now let us  explain how the entropy theory is used in the proof.
In Section~\ref{ss.expansive}, we study the time-one map $f=\phi_1$ of the flow  in a neighborhood of  the sectional hyperbolic set $\Lambda$, which has a  dominated splitting $E^s \oplus F^{cu}$. This enables us to  use  `fake foliations', which are invariant under the map $f$, but are generally not preserved by the flow. We will prove that the infinite Bowen balls are contained in those fake-foliations (Lemma~\ref{l.bowenball}), and the flow saturates the fake foliation for points in the infinite Bowen balls (Corollary~\ref{c.saturation}). Using the fake foliations, we establish a local product structure near singularities (Lemma~\ref{l.expansion.near}), which allows us to use the center foliation near singularities and  establish some expanding property near neighborhoods of singularities {\em{without }} using linearization. Using those properties, we will prove Theorem~\ref{m.almostexpansive} (see the beginning of Section~\ref{ss.expansive}) which is a stronger version of Theorem~\ref{m.h-expansive}: the set of points where the infinite Bowen balls are degenerate has full measure under every ergodic, invariant probability measure.

Note that our proof for the entropy expansiveness relies heavily on the sectional hyperbolic splitting. On the other hand, the example of Bonatti and da Luz in~\cite{BD}  does not admit a sectional hyperbolic splitting. It will be a challenging problem to obtain the entropy expansiveness for such classes. 

In Section~\ref{ss.positiveentropy}, we prove Theorem~\ref{m.positiveentropy} by showing that the time-one map $f$ on a neighborhood of $\Lambda$ have positive topological entropy, using the volume expansion rate on the $F^{cu}$ bundle as a lower bound. Then Theorem~\ref{m.positiveentropy} will follow by taking a sequence of such neighborhoods shrinking to $\Lambda$ and using the upper semi-continuity of the metric entropy. 

In Section~\ref{s.continuity}, we prove Theorem~\ref{m.continuity} using a similar argument as Katok~\cite{K80} and a shadowing lemma of Liao~\cite{Liao}, which allows the pseudo orbit to pass near singularities. The proof of Corollary~\ref{mc.robusthexpansive} and~\ref{mc.attractor} 
can be found at the end of Section~\ref{s.continuity}.

Finally, in the Appendix we will revisit the proof of Theorem~\ref{m.h-expansive}, without assuming that singularities are hyperbolic.

%%%%%%%%%%%   SECTION: PRELIMINARIES
%%%%%%%%%%%%%%%%%%%%%%%%%%%%%%%%%%%%%%%%%%
\section{Preliminaries}\label{s.preliminary}
Throughout this paper, $X$ will be a vector field that is  $C^1$ on a $d$-dimensional compact manifold $M$. Denote by $\Sing(X)$ (sometimes we also write $\Sing(\phi_t)$) the set of singularities of $X$, $\phi_t$ the flow generated by $X$, and $f = \phi_1$ the time-one map of $\phi_t$. We will write $\Phi_t$ for  the tangent flow, i.e., $\Phi_t  = D\phi_t: TM \to TM$.

\subsection{Dominated splitting} Let $g\in\Diff^1(M)$ be a diffeomorphism on $M$. We say that $g$ has a {\em dominated splitting $E\oplus F$}, if $TM$ can be decomposed into continuous, $Dg$ invariant subbundles $E$ and $F$, such that for some $L>0$, we have 
$$
\frac{\|Dg^L_x(u)\|}{\|u\|} \le \frac12 \frac{\|Dg^L_x(v)\|}{\|v\|}
$$
for every $x \in M$ and every  non-zero vectors $u \in E(x), v \in F(x)$. The dominated splitting on an invariant set $\Lambda$ can be defined in a similar way, with $TM$ replaced by $T_\Lambda M$.

For $a > 0$ and $x \in M$, a {\em $(a, F)$-cone} on the tangent space $T_xM$ is defined as
$$
C_a(F_x) = \{v:  v = v_E + v_F \mbox{ where } v_E \in E, v_F\in F \mbox{ and } \|v_E\|< a\|v_F\|\}\cup\{0\}.
$$
When $a$ is sufficiently small, the cone field $C_a(F_x)$, $x \in M$, is forward invariant by $Dg$,
i.e., there is $\lambda < 1$ such that for any $x \in  M$, $Dg_x(C_a(F_x)) \subset  C_{\lambda a}(F_{g(x)})$. Similarly, we can define the $(a, E)$-cone $C_a(E_x)$, which is backward invariant by $Dg$. When no
confusing is caused, we call the two families of cones by $F$ cones and $E$ cones.

The images of the cones under the exponential map are also forward or backward
invariant.  To be more precise, fix $\varepsilon_0 > 0$ small enough, such that the exponential
map is well-defined on the $\varepsilon_0$-ball in the tangent space. We denote by $C^M_a(F_x)$ the image of $C_a(F_x)$ under the exponential map restricted to the set $B_{\varepsilon_0}(0) \cap C_a(F_x) \subset  T_xM$ and call $C^M_a(F_x)$ a {\em local $F$ cone in $B_{\varepsilon_0}(x)$}. Then for any $x \in M$, we have:
$$
g\left(C^M_a(F_x) \cap B_{\varepsilon_0/\|g\|_{C^1}}(x)\right)\subset C^M_{\lambda a}(F_{f(x)}).
$$
In the same way we can define $C^M_a(E_x)$.

\begin{definition}
Let $D$ be a $C^1$ disk with dimension equals to $\dim F$. We say $D$ is:
\begin{itemize}
\item tangent to the $F$ cone if for any $x \in D$, $T_xD \subset  C_a(F_x)$;
\item tangent to the {\bf local $F$ cone at $x$ } if $D \subset C^M_a(F_x)$;
\item tangent to the {\bf local $F$ cone} if for any $y \in D$, we have $D \subset C^M_a(F_y)$.
\end{itemize}
\end{definition} 
$D$ is tangent to the local $F$ cone implies that it is tangent to the $F$ cone. Conversely, if
$D$ is tangent to the $F$ cone, then it can be divided into finitely many sub-disks, each of which is tangent to the local $F$ cone.
\begin{remark}
Topologically, for $a$ small enough, the local cones $C^M_a(E_x)$ and $C^M_a(F_x)$ are transverse to each other, that is, $C^M_a(E_x) \cap C^M_a(F_x) = \{x\}$.
\end{remark}
\begin{remark}
Suppose $D$ is a disk with dimension $\dim F$ such that $TD$ is transverse to $E$
bundle, then there is $n > 0$ sufficiently large, such that $g^n(D)$ is tangent to the $F$ cone.
Hence, it can be divided into finitely many connected pieces:
$
g^n(D) = \bigcup_{i=1}^l D_i
$,
such that each piece $D_i$ is tangent to the  local $F$ cone.
\end{remark}

The proof of the next lemma is simple and thus omitted.
\begin{lemma}\label{l.boundedvolume}
There is a constant $K>0$ such that for every $x \in M$ and any disk $D\subset B_{\varepsilon_0}(x)$ tangent to a local $F$ cone, we have $\Leb(D)<K.$
\end{lemma}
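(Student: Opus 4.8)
The plan is to pass to an exponential chart, use tangency to the cone to exhibit $D$ as the graph of a Lipschitz map, and then read off the volume bound from the area formula for graphs. The one delicate point will be verifying the graph property.

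I would begin by fixing any $y\in D$ and working in the exponential chart at $y$, so that $B_{\varepsilon_0}(x)$ is identified with a bounded region of $T_yM$ and $\tilde D:=\exp_y^{-1}(D)$ is a $C^1$ disk of dimension $k:=\dim F$ contained in the $\varepsilon_0$-ball of $T_yM$. Since $D$ is tangent to the local $F$ cone it is in particular tangent to the $F$ cone, so $T_zD\subset C_a(F_z)$ for all $z\in D$. Because $M$ is compact, the subbundles $E,F$ are uniformly continuous and $\exp$ has uniformly bounded derivatives on $\varepsilon_0$-balls; hence, after shrinking $\varepsilon_0$ once and for all (which we are free to do), every tangent plane $T_w\tilde D$ with $w\in\tilde D$ lies in $C_{2a}(F_y)$, and the linear projection $\pi\colon T_yM\to F_y$ along $E_y$ restricts to an isomorphism on each $T_w\tilde D$. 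In particular $\pi|_{\tilde D}$ is an immersion.

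The key step is to show $\pi|_{\tilde D}$ is injective, so that $\tilde D$ is the graph over $\Omega:=\pi(\tilde D)\subset F_y$ of a $C^1$ map $h\colon\Omega\to E_y$ with $\|Dh\|<2a$. Suppose $z_1,z_2\in D$ have $\pi(\exp_y^{-1}z_1)=\pi(\exp_y^{-1}z_2)$, and set $v:=\exp_y^{-1}z_2-\exp_y^{-1}z_1\in E_y$. Applying the tangency of $D$ to the local $F$ cone \emph{at} $z_1$ gives $\exp_{z_1}^{-1}z_2\in C_a(F_{z_1})$; read in the chart at $y$ this vector differs from $v$ by an $O(\varepsilon_0)$-small correction, and $C_a(F_{z_1})$ is $O(\varepsilon_0)$-close to $C_a(F_y)$, so $v\in C_{2a}(F_y)$. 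But $v\in E_y$, and $E_y\cap C_{2a}(F_y)=\{0\}$ once $a$ is small, so $v=0$ and $z_1=z_2$. (This is precisely where thinness of the cone, i.e.\ $a$ and $\varepsilon_0$ small, enters; without it a disk sitting inside a cone could oscillate and have arbitrarily large volume.)

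It then remains to estimate. Since $\pi$ has operator norm at most a uniform constant $C_1$ (depending only on the uniform transversality of $E$ and $F$), $\Omega$ sits in a ball of radius $\le C_1\varepsilon_0$ in the $k$-plane $F_y$, so $\Leb(\Omega)\le\omega_k(C_1\varepsilon_0)^k$ with $\omega_k$ the volume of the unit $k$-ball. The area formula for the graph of $h$ gives
$$
\Leb(\tilde D)=\int_\Omega\sqrt{\det\bigl(I+Dh(w)^{*}Dh(w)\bigr)}\,dw\le(1+4a^2)^{k/2}\,\Leb(\Omega)\le(1+4a^2)^{k/2}\,\omega_k\,(C_1\varepsilon_0)^k,
$$
and transferring back through $\exp_y$ (whose derivative is bounded by a uniform constant $\Lambda_0$ on $\varepsilon_0$-balls, again by compactness of $M$) yields $\Leb(D)\le\Lambda_0^k(1+4a^2)^{k/2}\omega_k(C_1\varepsilon_0)^k=:K$, a constant independent of $x$ and $D$. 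The main obstacle is the injectivity in the previous paragraph; everything else is routine once the graph structure is available.
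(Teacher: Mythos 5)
The paper omits the proof of this lemma, declaring it ``simple,'' so there is no written argument to compare against. Your proof is correct, and it is the natural way to fill the gap: exhibit $D$ as a uniformly Lipschitz graph over a bounded region of $F_y$, then read off the volume bound from the area formula. You are also right to flag the graph property (injectivity of the projection) as the delicate point. Note that the tangent-cone condition $T_zD\subset C_a(F_z)$ alone would \emph{not} suffice here for $\dim F\ge 2$: an embedded disk with tangent planes in a thin cone can still wrap around (spiral-staircase style) so that the projection along $E_y$ is merely an immersion, not injective, and its volume can be made arbitrarily large inside a fixed ball. What rescues the argument is precisely the stronger ``local $F$ cone'' hypothesis, i.e.\ $D\subset C_a^M(F_{z})$ for \emph{every} $z\in D$, which you invoke at $z_1$ to conclude $\exp_{z_1}^{-1}z_2\in C_a(F_{z_1})$; after the chart-change and cone-continuity estimates (both $O(\varepsilon_0)$, absorbed by passing from $a$ to $2a$), this forces the displacement to lie in a cone transverse to $E_y$, killing it. Two small cosmetic remarks: since $y\in D\subset B_{\varepsilon_0}(x)$, the set $\tilde D=\exp_y^{-1}(D)$ sits in a $2\varepsilon_0$-ball rather than an $\varepsilon_0$-ball (a harmless factor); and the intersection $E_y\cap C_{2a}(F_y)=\{0\}$ holds for every $a>0$ directly from the cone's definition, so no smallness of $a$ is needed at that step.
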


By the forward invariance of the $F$ cone field, we obtain:
\begin{lemma}\label{l.invariantcone}
For every $x\in M$, $\varepsilon < \frac{\varepsilon_0}{\|g\|_{C^1}}$ and $n>0$, if  $D \subset B_\varepsilon(x)$ is tangent to a local $F$ cone at $x$, with  $g^i(D) \subset B_\varepsilon(g^i(x))$ for every $0 \le i \le n$, then $g^n(D)$ is tangent to the local $F$ cone at $g^n(x)$.
\end{lemma}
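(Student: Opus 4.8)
The plan is a direct induction on the iterate count, using only the forward invariance of the local $F$ cone field recorded just before Lemma~\ref{l.boundedvolume}, namely that for every $y \in M$ one has
$$
g\left(C^M_a(F_y) \cap B_{\varepsilon_0/\|g\|_{C^1}}(y)\right) \subset C^M_{\lambda a}(F_{g(y)}),
$$
where $\lambda < 1$ is the contraction constant of the cone field. Concretely, I would prove by induction on $i \in \{0, 1, \dots, n\}$ the statement that $g^i(D) \subset C^M_a(F_{g^i(x)})$, i.e.\ that $g^i(D)$ is tangent to the local $F$ cone at $g^i(x)$; applying this with $i = n$ is exactly the conclusion of the lemma.

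The base case $i = 0$ is nothing but the hypothesis that $D$ is tangent to the local $F$ cone at $x$. For the inductive step, suppose $g^i(D) \subset C^M_a(F_{g^i(x)})$ for some $i < n$. By hypothesis $g^i(D) \subset B_\varepsilon(g^i(x))$, and since $\varepsilon < \varepsilon_0/\|g\|_{C^1}$ this yields $g^i(D) \subset C^M_a(F_{g^i(x)}) \cap B_{\varepsilon_0/\|g\|_{C^1}}(g^i(x))$. Applying the forward invariance above with $y = g^i(x)$ gives
$$
g^{i+1}(D) \subset C^M_{\lambda a}(F_{g^{i+1}(x)}) \subset C^M_a(F_{g^{i+1}(x)}),
$$
the last inclusion holding because $\lambda < 1$, so that $C_{\lambda a}(F_{g^{i+1}(x)}) \subset C_a(F_{g^{i+1}(x)})$ fibrewise and hence the same inclusion holds after applying the exponential map. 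This closes the induction, and the case $i = n$ states that $g^n(D)$ (which is again a $C^1$ disk of dimension $\dim F$, since $g$ is a diffeomorphism) is tangent to the local $F$ cone at $g^n(x)$.

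There is no real obstacle here; the argument is bookkeeping with the definition of the local cones. The only two points worth flagging are: (i) the domain restriction to $B_{\varepsilon_0/\|g\|_{C^1}}$ in the forward-invariance statement, which is precisely the reason the hypothesis of the lemma imposes $\varepsilon < \varepsilon_0/\|g\|_{C^1}$ (one needs each $g^i(D)$ to lie in the region where $g$ maps the local $F$ cone into the next local $F$ cone); and (ii) the fact that each application of $g$ shrinks the aperture of the cone by the factor $\lambda$, which is harmless for the stated conclusion because the narrower cone $C^M_{\lambda a}$ is contained in $C^M_a$.
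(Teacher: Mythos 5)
Your proof is correct and matches the paper's intent exactly: the paper states the lemma with the one-line remark ``By the forward invariance of the $F$ cone field, we obtain:'' and omits the proof, and your induction using the displayed forward-invariance inclusion $g\bigl(C^M_a(F_y)\cap B_{\varepsilon_0/\|g\|_{C^1}}(y)\bigr)\subset C^M_{\lambda a}(F_{g(y)})$ is precisely the argument that remark is pointing to. The two flagged points (the domain restriction justifying $\varepsilon<\varepsilon_0/\|g\|_{C^1}$, and $C^M_{\lambda a}\subset C^M_a$ since $\lambda<1$) are the only nontrivial bookkeeping details, and you have them right.
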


One can easily check that if $g$ has dominated splitting on an invariant set $\Lambda$ instead of the entire manifold $M$, then the invariant cone fields $C_a(E)$ and $C_a(F)$ can be extended to a neighborhood of $\Lambda$. See~\cite[Appendix B]{BDV}. One can then define the local cones $\{C_a^M\}$ in the same way, and the above lemmas hold for points in the neighborhood of $\Lambda. $

For $\varepsilon > 0$ and $n \ge 1$,
we consider the dynamical ball of radius $\varepsilon > 0$ and length $n$ around $x \in M$, defined as $B_n(x,\varepsilon) = \{y: d(g^k(x), g^k(y)) < \varepsilon, k = 0,\ldots, n-1\}$.  This is also called the $(n,\varepsilon)$-Bowen ball and plays an important role in the study of topological entropy.  As a simple corollary of Lemma~\ref{l.invariantcone}, we have:

\begin{lemma}\label{l.boundedvolumebowen}
Suppose $D$ is a disk with dimension $\dim F$ and tangent to the local $F$ cone. Then for any  $x \in  D$ and $\varepsilon < \varepsilon_0/\|g\|_{C^1}$, one has
$$ 
\Leb(g^n(D \cap B_n(x, \varepsilon))) \le  K.
$$
\end{lemma}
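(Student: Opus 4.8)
The plan is to cut $D$ down to the part that stays inside the Bowen ball, carry that piece forward by $g^n$, observe that it lands tangent to a local $F$ cone inside an $\varepsilon_0$-ball, and then quote the uniform volume bound of Lemma~\ref{l.boundedvolume}. Write $D_n:=D\cap B_n(x,\varepsilon)$. By the definition of the $(n,\varepsilon)$-Bowen ball, $g^i(D_n)\subset B_\varepsilon(g^i(x))$ for all $0\le i\le n-1$; in particular $D_n\subset B_\varepsilon(x)$. Moreover, since $D$ is tangent to the local $F$ cone we have $D\subset C^M_a(F_x)$, hence $D_n\subset C^M_a(F_x)$, i.e.\ $D_n$ is tangent to the local $F$ cone at $x$; note also $x\in D_n$.

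Suppose first $n\ge 2$. I would apply Lemma~\ref{l.invariantcone} with the integer there taken to be $n-1$: its hypotheses hold because $\varepsilon<\varepsilon_0/\|g\|_{C^1}$, $D_n\subset B_\varepsilon(x)$ is tangent to the local $F$ cone at $x$, and $g^i(D_n)\subset B_\varepsilon(g^i(x))$ for $0\le i\le n-1$. This gives that $g^{n-1}(D_n)$ is tangent to the local $F$ cone at $g^{n-1}(x)$, that is $g^{n-1}(D_n)\subset C^M_a(F_{g^{n-1}(x)})$. For the last iterate the Bowen condition no longer helps, so I would take this step directly from the forward invariance of the local cone field recorded in Section~\ref{s.preliminary}: since $g^{n-1}(D_n)\subset C^M_a(F_{g^{n-1}(x)})\cap B_\varepsilon(g^{n-1}(x))\subset C^M_a(F_{g^{n-1}(x)})\cap B_{\varepsilon_0/\|g\|_{C^1}}(g^{n-1}(x))$,
\begin{align*}
g^n(D_n)\ \subset\ g\!\left(C^M_a(F_{g^{n-1}(x)})\cap B_{\varepsilon_0/\|g\|_{C^1}}(g^{n-1}(x))\right)\ &\subset\ C^M_{\lambda a}(F_{g^n(x)})\\
&\subset\ C^M_a(F_{g^n(x)})\ \subset\ B_{\varepsilon_0}(g^n(x)).
\end{align*}
When $n=1$ this single step already gives the conclusion, applied to $D_1=D\cap B_\varepsilon(x)$. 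In either case $g^n(D_n)$ is a disk of dimension $\dim F$, tangent to a local $F$ cone (at $g^n(x)\in g^n(D_n)$), and contained in $B_{\varepsilon_0}(g^n(x))$.

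Lemma~\ref{l.boundedvolume}, applied with base point $g^n(x)$, then yields $\Leb\big(g^n(D_n)\big)<K$, which is the desired bound $\Leb\big(g^n(D\cap B_n(x,\varepsilon))\big)\le K$. I do not expect any serious obstacle: the statement is little more than iterating the invariance of the local $F$ cone along controlled orbits (Lemma~\ref{l.invariantcone}) and quoting the uniform volume bound for disks tangent to a local $F$ cone (Lemma~\ref{l.boundedvolume}). The only point that needs a moment's care is that the $(n,\varepsilon)$-Bowen condition controls the orbit only at the indices $0,\dots,n-1$, so the passage from time $n-1$ to time $n$ must be done by hand, which is exactly where the hypothesis $\varepsilon<\varepsilon_0/\|g\|_{C^1}$ enters; if $D\cap B_n(x,\varepsilon)$ happens to be disconnected one simply runs the argument on each connected component, all of which are carried into the single local cone $C^M_a(F_{g^n(x)})$ and so are handled as in Lemma~\ref{l.boundedvolume}.
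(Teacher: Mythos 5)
Your proof is correct and takes the same approach as the paper: cut $D$ down to $D\cap B_n(x,\varepsilon)$, propagate the local $F$ cone condition forward using Lemma~\ref{l.invariantcone}, and invoke the uniform volume bound of Lemma~\ref{l.boundedvolume}. The paper's own proof is a one-liner that glosses over the index bookkeeping at the final iterate (the Bowen ball only controls times $0,\dots,n-1$, while Lemma~\ref{l.invariantcone} as stated wants control through time $n$), which you correctly notice and handle by one extra application of the forward invariance of the local cone field, using precisely the hypothesis $\varepsilon<\varepsilon_0/\|g\|_{C^1}$.
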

\begin{proof}
This Lemma follows easily from Lemma~\ref{l.invariantcone} and the observation that\\ $g^n(B_n(x,\varepsilon))\subset B_\varepsilon(g^n(x))$.
\end{proof}

\subsection{Entropy of continuous maps}
In this section  $g : M \to M$ will be a continuous map and $K$  a subset of $M$ not necessarily invariant. 
A set $E \subset M$ is  $(n, \varepsilon)$-spanning for $K$ if for any $x \in  K$, there is $y \in E$ such
that $d(g^i(x), g^i(y)) < \varepsilon$ for all $0 \le i \le n-1$. In other words, the dynamical balls $B_n(y, \varepsilon)$, $y \in E$ cover $K$. Let $r_n(K, \varepsilon)$ denote the smallest cardinality of any $(n, \varepsilon)$-spanning set of $K$, and
$$
r(K, \varepsilon) = \limsup_{n\to\infty}\frac1n\log r_n(K, \varepsilon).
$$
The topological entropy of $g$ on $K$ is then defined as 
$$
h_{top}(g, K) = \lim_{\varepsilon\to 0}r(K, \varepsilon),
$$
and the topological entropy of $g$ is defined as $h_{top}(g) = h_{top}(g, M)$. 
\vspace{0.1cm}

For each $x \in M$ and $\varepsilon > 0$, let $B_\infty(x, \varepsilon) = \{y : d(g^n(x), g^n(y)) < \varepsilon \mbox{ for } n \in \ZZ\}$ be the two-sided $(\infty,\varepsilon)$-Bowen ball at $x$. The map $g$ is  {\em $\varepsilon$-entropy expansive} if
$$
\sup_{x\in M}h_{top}(g, B_\infty(x, \varepsilon)) = 0.
$$
In other words, the $(\infty,\varepsilon)$-Bowen ball has zero entropy for all $x$.
It is well known (see for example~\cite{B72}) that if $g$ is $\varepsilon$-entropy expansive, then the topological entropy ``stabilizes'' at $\varepsilon$, that is, $h_{top}(g) = r(M,\varepsilon)$.

Next we consider the {\em{metric entropy}} of an invariant measure. Let $\mu$ be an invariant measure and $\cA$ a finite measurable partition. The metric entropy of $\mu$ corresponding to the partition $\cA$ is defined as
$$
h_\mu(\cA) = -\lim_{n\to\infty}\frac{1}{n} \sum_{B\in\cA^{n-1}_0}\mu(B) \log \mu(B),
$$
where $\cA^{n-1}_0$ is the $(n-1)$th joint of $\cA$:
$$
\cA^{n-1}_0 = \cA \vee g^{-1}\cA \vee \cdots \vee g^{-(n-1)}\cA.
$$
The metric entropy of an invariant measure $\mu$ is defined as
$$
h_{\mu} = \sup_{ \cA \mbox{ is a finite partition}}\{h_{\mu}(\cA)\}.
$$
By the variational principle, $h_{top}(g) = \sup_{\mu\in M_{inv}(g)} h_{\mu}$, where $M_{inv}(g)$ denotes
the space of invariant probabilities of $g$. If $g$ is $\varepsilon$-entropy expansive, then for every finite partition $\cA$ with $\diam \cA<\varepsilon$, we have 
$h_{\mu} = h_\mu(\cA).$ See~\cite[Theorem 3.5]{B72}.

In general, for maps with finite differentiability, metric entropy is not necessarily  upper semi-continuous with respect to the invariant measures, and the topological entropy may not be achieved by the metric entropy of any invariant measure, see for example~\cite{DN,N}. However, it is proven in~\cite{B72} that if $g$ is $\varepsilon$-entropy expansive (or asymptotically $h$-expansive, meaning that $\lim_{\varepsilon\to 0}\sup_{x\in M}h_{top}(g, B_\infty(x, \varepsilon)) = 0$), then the metric entropy $h_\mu$ is upper semi-continuous with respect to $\mu$. As a result, $g$ admits a measure of maximal entropy.

\subsection{Ergodic theory for flows}
In this section we state some results on the ergodic theory for flows, which will be used later. Throughout this section, $\Lambda$ denotes a compact invariant set of the flow $\phi_t$ with singularities, and $\mu$ is a non-trivial invariant measure of $\phi_t$, i.e., $\mu(\Sing(\phi_t)) = 0$.

A dominated splitting for a flow $\phi_t$ is defined similarly to the case of diffeomorphisms. 
The set $\Lambda$ admits a {\em dominated splitting} $E \oplus F$ if this splitting is
invariant for $\Phi_t$, and there exist $C > 0$ and $\lambda < 1$ such that for every $x \in \Lambda$, and
every pair of unit vectors $u \in E_x$ and $v \in F_x$, one has
$$
\|(\Phi_t)_x(u)\| \le C\lambda^t\|(\Phi_t)_x(v)\| \mbox{ for } t > 0.
$$
%Note that in the equation above, the assumption on the invariance of the splitting is not necessary. 
We invite the readers to~\cite[Appendix B]{BDV} and~\cite{ArPa10} for more properties on the dominated splitting. The next lemma states the relation between dominated splitting for the flow and its time-one map.

\begin{lemma}
$E\oplus F$ is a dominated splitting for the flow $\phi_t|_\Lambda$ if and only if it
is a dominated splitting for the time-one map $f|_\Lambda$. Moreover, if $\phi_t|_\Lambda$ is transitive,
then we have either $X|_{\Lambda\setminus\Sing(X)} \subset E $ or $X|_{\Lambda\setminus\Sing(X)} \subset F.$
\end{lemma}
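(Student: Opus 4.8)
The plan is to prove both assertions of the lemma by reducing everything to the relation between the tangent flow $\Phi_t$ and its time-one restriction $f$ on $T_\Lambda M$, together with a compactness/uniform-continuity argument.

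\medskip

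\textbf{The equivalence of dominated splittings.} One direction is immediate: if $E\oplus F$ is dominated for $\phi_t$ with constants $C,\lambda<1$, then in particular for $t=L$ with $L$ large enough that $C\lambda^L<1/2$ we get the defining inequality for a dominated splitting of $f$ (after one notes that $\Phi_1^L=\Phi_L$ and $Df^L=\Phi_L$ on $T_\Lambda M$, since $f=\phi_1$). For the converse, suppose $E\oplus F$ is $Df$-invariant and $L$-dominated for $f$. First I would check $\Phi_t$-invariance of the bundles: the subbundles are $\Phi_n$-invariant for all integers $n$, and since $\Phi_t$ commutes with $\Phi_n$ and the splitting is continuous, a standard argument (the cone fields built from the $f$-domination are $\Phi_n$-invariant, hence eventually $\Phi_t$-invariant for all real $t$ by continuity in $t$, forcing $\Phi_t E = E$, $\Phi_t F = F$) gives invariance for the whole flow. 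Then I would promote the discrete-time contraction of vectors in $E$ relative to $F$ to a continuous-time estimate: for unit $u\in E_x$, $v\in F_x$ and arbitrary $t>0$, write $t=n+s$ with $n\in\ZZ_{\ge0}$ and $s\in[0,1)$, apply the $f$-domination $n$ times to $\Phi_s u$ and $\Phi_s v$, and absorb the bounded distortion coming from $\Phi_s$ (uniformly bounded above and below by compactness of $\Lambda\times[0,1]$ and continuity of $\Phi$) into the constant $C$. This yields $\|\Phi_t u\|\le C\lambda^t\|\Phi_t v\|$ for a new $C$ and a new $\lambda<1$, which is exactly the definition of a dominated splitting for $\phi_t$.

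\medskip

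\textbf{Locating the flow direction.} Assume now $\phi_t|_\Lambda$ is transitive, and let $x_0\in\Lambda\setminus\Sing(X)$ have dense orbit. The vector $X(x_0)\neq 0$ decomposes as $X(x_0)=u_0+v_0$ with $u_0\in E_{x_0}$, $v_0\in F_{x_0}$; since $X$ is flow-invariant, $\Phi_t X(x_0)=X(\phi_t x_0)=\Phi_t u_0+\Phi_t v_0$ is the $E\oplus F$ decomposition of $X(\phi_t x_0)$ for every $t$. The key dichotomy: if $v_0\neq 0$, then domination forces $\|\Phi_t u_0\|/\|\Phi_t v_0\|\to 0$ as $t\to+\infty$, so along the forward orbit the flow direction becomes asymptotically tangent to $F$; by density of $\{\phi_t x_0\}$ and continuity of $X$ and of the bundles, this means $X(y)\in F_y$ for all $y\in\Lambda\setminus\Sing(X)$ — here one uses that the angle $\angle(X(\phi_{t_k}x_0),F_{\phi_{t_k}x_0})\to 0$ along a subsequence $\phi_{t_k}x_0\to y$, while the angle function is continuous and the set of such limits $y$ exhausts $\Lambda\setminus\Sing(X)$. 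Symmetrically, if instead $u_0\neq 0$, running $t\to-\infty$ and using domination in the form $\|\Phi_{-t}v_0\|/\|\Phi_{-t}u_0\|\to 0$ gives $X(y)\in E_y$ for all regular $y$. Since one of $u_0,v_0$ is nonzero, one of the two alternatives holds, which is the claimed conclusion $X|_{\Lambda\setminus\Sing(X)}\subset E$ or $\subset F$.

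\medskip

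\textbf{Anticipated main obstacle.} The routine linear-algebra estimates are not the difficulty; the delicate point is the propagation-by-density step in the second part — deducing a \emph{closed} condition ($X(y)\in F_y$ on all regular points) from an asymptotic condition along a single dense forward orbit. One must argue carefully that the angle between $X$ and $F$ goes to zero \emph{uniformly enough} along the orbit, or rather that its limsup along the orbit is zero, so that passing to limits $\phi_{t_k}x_0\to y$ is legitimate; a clean way is to observe that $\angle(X(\phi_t x_0),F_{\phi_t x_0})$ is a continuous function of $t$ tending to $0$, hence its value at times accumulating on any given $y$ tends to $0$, and then invoke continuity of $y\mapsto\angle(X(y),F_y)$ on the compact regular part together with density. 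I would also be mildly careful that $\Lambda$ may contain singularities, so "transitive" refers to a dense orbit which is necessarily regular, and the conclusion is only asserted away from $\Sing(X)$, consistent with the statement.
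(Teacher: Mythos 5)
Your proposal follows essentially the same strategy as the paper: the easy direction is observed to be trivial, the $\Phi_t$-invariance of the bundles is deduced from the $Df$-domination, and the flow direction is located via the asymptotic collapse of the $E$-component relative to the $F$-component together with density of an orbit. Two points deserve a second look. First, for the $\Phi_t$-invariance your cone-field argument (``$\Phi_n$-invariant, hence eventually $\Phi_t$-invariant for all real $t$ by continuity'') is vague as written: $\Phi_t$ for non-integer $t$ need not preserve the cone, and turning ``eventually invariant'' into an actual equality $\Phi_t E=E$ requires a little extra work. The paper instead observes that $\Phi_t(E)\oplus\Phi_t(F)$ is again a dominated splitting for $f=\phi_1$ (since $\Phi_t$ commutes with $Df$ with uniformly bounded distortion on $t\in[0,1]$) and invokes the uniqueness of the dominated splitting of prescribed index; this is the cleaner route.

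The second and more substantive issue is in your case analysis for locating the flow direction. Your case $v_0\neq 0$ is fine: forward density of $\mathrm{Orb}^+(x_0)$ and the forward-time domination give $X\subset F$ on $\Lambda\setminus\Sing(X)$. But your case $u_0\neq 0$ runs $t\to-\infty$ and then ``uses density'' — this implicitly requires the \emph{backward} orbit of $x_0$ to be dense, which does not follow from having a dense forward orbit. As stated this step would fail. The gap is easily closed: once the first case gives $X\subset F$ whenever $v_0\neq 0$, in particular $X(x_0)\in F_{x_0}$ and so $u_0=0$; hence the only remaining case is $v_0=0$, i.e.\ $X(x_0)\in E_{x_0}$, and then $\Phi_t$-invariance of $E$ together with forward density immediately gives $X\subset E$ — no backward time needed. (Alternatively one could invoke a Baire-category argument to choose $x_0$ with both forward and backward dense orbits, but that is heavier than necessary.) The paper's version reaches the same conclusion by first showing, via forward recurrence of $x_0$ to itself and the limiting angle estimate, that $X(x_0)$ must lie in $E_{x_0}\cup F_{x_0}$, and then propagating by invariance and forward density; that is the streamlined form of your argument.
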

\begin{proof}
The proof of the `only if' part is trivial. Now suppose $E \oplus F$ is a
dominated splitting for $f|_\Lambda$. In order to show that it is a dominated splitting for $\phi_t$, we only need to prove that it is invariant under $\Phi_t$. 

By the commutative property between $f$ and $\phi_t$, it is easy to see that for any $t$, $\Phi_t(E)\oplus\Phi_t(F)$ is also a dominated splitting for $f$. Because the dominated splitting is unique once the dimensions of the subbundles are fixed (see~\cite[B.1.1, p.288]{BDV}), we conclude that the splitting $E \oplus F$ is invariant under $\Phi_t$. Therefore,  $E \oplus F$  is also a dominated
splitting for $\phi_t|_\Lambda$.

Now suppose $\phi_t|_\Lambda$ is transitive. Take $x \in \Lambda\setminus\Sing(X)$ such that $\Orb^+(x)$ is dense
in $\Lambda$. If $X(x) \notin  E_x \cup F_x$, then for $t$ sufficient large, $X(\phi_t(x))$ is
close to $F(\phi_t(x))$, by the domination between $E$ and $F$. We take $t_0$ large such that $\phi_{t_0}(x)$ is close to $x$, then $X(\phi_{t_0}(x))$  is close to $X(x)$ and $F_{\phi_{t_0}(x)}$ is close to $F_x$, which implies that $X(x)$
is arbitrarily close to $F_x$, a contradiction. This shows that $X(x) \in  E_x \cup F_x$. 

Because $\Orb^+(x)$ is dense, by the continuity of the flow direction and the sub-bundles $E$ and $F$, if $X(x) \in E_x$, we must have  $X|_{\Lambda\setminus\Sing(X)} \subset  E$. The same argument applies if  $X(x) \in F_x$. The proof is complete.
\end{proof}

\begin{remark}
If the dominated splitting $E\oplus F$ is sectional hyperbolic, then $\Phi_t$ on $E$ is uniformly contracting by definition. Since the flow speed $\|X(x)\|$ is bounded and thus cannot be backward exponentially expanding, we must have $X|_{\Lambda\setminus\Sing(X)} \subset F$. For more detail, see Lemma~\ref{l.domination.singularity}.
\end{remark}

\begin{definition}
{\em The topological entropy (resp. metric entropy) of a continuous flow}
is the topological entropy (resp. metric entropy) of its time-one map. A flow is $\varepsilon$-entropy expansive if its time-one map is $\varepsilon$-entropy expansive.
\end{definition}
\begin{lemma}\label{l.componententropy}
Let $\mu$ be an ergodic invariant measure of $\phi_t$, and $\tilde{\mu}$ be an ergodic
component of $\mu$ for the time-one map $f$. Then $h_\mu(\phi_t) = h_{\tilde{\mu}}(f)$.
\end{lemma}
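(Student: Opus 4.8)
The plan is to reduce to the fact that, although $f=\phi_1$ need not be $\mu$-ergodic, every ergodic component of $\mu$ for $f$ carries the same metric entropy, and that common value equals $h_\mu(f)$; since by definition $h_\mu(\phi_t)=h_\mu(f)$, this yields $h_\mu(\phi_t)=h_{\tilde\mu}(f)$. Concretely, I would first take the ergodic decomposition $\mu=\int\omega\,d\tau(\omega)$ of $\mu$ with respect to $f$, supported on the $f$-ergodic measures, and recall the classical facts that $\nu\mapsto h_\nu(f)$ is affine on the invariant measures and that entropy integrates over the ergodic decomposition, $h_\mu(f)=\int h_\omega(f)\,d\tau(\omega)$. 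It then suffices to prove that $\omega\mapsto h_\omega(f)$ is $\tau$-a.e.\ equal to a constant $h^*$: indeed then $h_\mu(f)=h^*$, and since $\tilde\mu$ is one of these components (for $\tau$-a.e.\ choice), $h_{\tilde\mu}(f)=h^*=h_\mu(f)=h_\mu(\phi_t)$.

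The core step is the $\tau$-a.e.\ constancy of the entropy function. Every time-$s$ map $\phi_s$ commutes with $f$, so $(\phi_s)_*$ sends $f$-ergodic measures to $f$-ergodic measures; since $(\phi_s)_*\mu=\mu$, uniqueness of the ergodic decomposition gives $(\phi_s)_*\tau=\tau$, and $(s,\omega)\mapsto(\phi_s)_*\omega$ is a $\tau$-preserving action of $\mathbb{R}$ on the space of $f$-ergodic components. As $\phi_s$ is a measure-theoretic conjugacy between $(M,f,\omega)$ and $(M,f,(\phi_s)_*\omega)$, we get $h_{(\phi_s)_*\omega}(f)=h_\omega(f)$, so the entropy function is invariant under this action. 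Finally the action is $\tau$-ergodic: an invariant set $E$ with $0<\tau(E)<1$ would, through its normalized barycenter, exhibit $\mu$ as a nontrivial convex combination of two $\phi_t$-invariant probabilities, distinct by the uniqueness of the ergodic decomposition, contradicting that the $\phi_t$-ergodic measure $\mu$ is an extreme point of the simplex of $\phi_t$-invariant measures. An invariant measurable function for an ergodic action is a.e.\ constant, which proves the claim. (Alternatively one argues directly on $M$: the function $x\mapsto h_{\omega_x}(f)$, where $\omega_x=\lim_n\frac1n\sum_{j<n}\delta_{f^j x}$ is the $f$-ergodic empirical measure at $x$, is $f$-invariant and $\phi_s$-invariant for each $s$; averaging over $s\in[0,1]$ yields a genuinely $\phi_t$-invariant function, hence $\mu$-a.e.\ constant, this function agrees $\mu$-a.e.\ with $x\mapsto h_{\omega_x}(f)$ by Fubini, and $\int_M h_{\omega_x}(f)\,d\mu(x)=h_\mu(f)$ identifies the constant.)

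I expect the main obstacle to be exactly this ergodicity statement and the measure-theoretic bookkeeping around it. One cannot reduce to the familiar picture of finitely many ergodic components cyclically permuted by some $\phi_{1/k}$, because a non-ergodic time-one map can genuinely have uncountably many ergodic components (for instance a periodic flow where $\phi_1$ is the identity, or a product of such a flow with a Bernoulli flow), so the argument has to run on the full, possibly uncountable, space of components. One must also check that entropy is a measurable function of the invariant measure and invoke the correct form of the ergodic-decomposition formula for entropy. The remaining steps are routine.
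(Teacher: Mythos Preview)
Your argument is correct, but the paper's proof is considerably more direct. Instead of working with the full ergodic decomposition $\mu=\int\omega\,d\tau(\omega)$ and proving that the induced $\mathbb{R}$-action on the space of components is $\tau$-ergodic, the paper simply observes that the single average
\[
\nu=\int_0^1(\phi_t)_*\tilde\mu\,dt
\]
is already a $\phi_t$-invariant probability absolutely continuous with respect to $\mu$, hence equals $\mu$ by flow-ergodicity. Since each $(\phi_t)_*\tilde\mu$ is $f$-invariant and conjugate to $(M,f,\tilde\mu)$ via $\phi_t$, all of them have $f$-entropy $h_{\tilde\mu}(f)$; affinity of entropy then gives $h_\mu(f)=\int_0^1 h_{(\phi_t)_*\tilde\mu}(f)\,dt=h_{\tilde\mu}(f)$ in one line. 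Your route establishes the stronger fact that \emph{any} measurable $(\phi_s)_*$-invariant function on the space of $f$-ergodic components is $\tau$-a.e.\ constant, which is nice to have in hand, but for this lemma it brings in extra machinery (the ergodic-decomposition formula for entropy, measurability of $\omega\mapsto h_\omega(f)$, and the ergodicity-of-the-action argument) that the direct averaging trick avoids entirely.
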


\begin{proof}
Observe that $\tilde{\mu}_t = (\phi_t)_*\tilde{\mu}$ is also an $f$-invariant measure and
$$
\mu = \int_{[0,1]}\tilde{\mu}_t\,dt.
$$
On the other hand, $h_{\tilde{\mu}_t}(f) = h_{\tilde{\mu}}(f)$ due to the following observation: for any partition
$\cA = \{A_1, \ldots , A_k\}$, write $\cA_t = \{\phi_t(A_1), \ldots , \phi_t(A_k)\}$, then $\tilde{\mu}(A_i) =\tilde{\mu}_t(\phi_t(A_i))$.
Since the  metric entropy is an affine function with  respect to the invariant measures, we get 
$$
h_\mu(\phi_t)  = h_\mu(f) = \int_{[0,1]}h_{\tilde{\mu}_t}(f)dt = \int_{[0,1]}h_{\tilde{\mu}}(f)dt = h_{\tilde{\mu}}(f) .
$$
\end{proof}

As a corollary of the previous lemma, we state the following two results regarding entropy expansiveness in a flow version: 
\begin{lemma}\cite{B72}\label{l.variationalprinciple}
If $\phi_t$ is entropy expansive then the metric entropy function is upper semi-continuous. In particular, there exists a measure of maximal entropy.
\end{lemma}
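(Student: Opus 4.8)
The plan is to reduce the statement entirely to the corresponding theorem of Bowen~\cite{B72} for the time-one map $f=\phi_1$. Recall that, by definition, ``$\phi_t$ is entropy expansive'' means that $f$ is $\varepsilon$-entropy expansive for some $\varepsilon>0$ (in particular $f$ is asymptotically $h$-expansive), and that $h_{top}(\phi_t)=h_{top}(f)$ while, for a $\phi_t$-invariant measure $\mu$, $h_\mu(\phi_t)=h_\mu(f)$. Bowen's theorem~\cite{B72} then asserts: for an $\varepsilon$-entropy expansive continuous map $f$ of a compact metric space, the map $\nu\mapsto h_\nu(f)$ is upper semi-continuous on the space $M_{inv}(f)$ of $f$-invariant Borel probabilities with the weak-$*$ topology, and $f$ admits a measure of maximal entropy.

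First I would establish the upper semi-continuity of $\mu\mapsto h_\mu(\phi_t)$. Since $f=\phi_1$, every $\phi_t$-invariant measure is $f$-invariant, so $M_{inv}(\phi_t)\subset M_{inv}(f)$; restricting the upper semi-continuous function $\nu\mapsto h_\nu(f)$ to the subset $M_{inv}(\phi_t)$ keeps it upper semi-continuous, and on that subset it coincides with $\mu\mapsto h_\mu(\phi_t)$. That is all that is needed. I would also note in passing that $M_{inv}(\phi_t)$ is a closed, hence compact, subset of the space of all Borel probabilities, being the common zero set of the weak-$*$ continuous functionals $\mu\mapsto\int g\circ\phi_s\,d\mu-\int g\,d\mu$, for $g$ continuous and $s$ real.

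Next, for the existence of a measure of maximal entropy, I would start from a measure of maximal entropy $\nu\in M_{inv}(f)$ for $f$ (which exists by Bowen), so $h_\nu(f)=h_{top}(f)=h_{top}(\phi_t)$, and average it along the flow: set $\bar\nu=\int_{[0,1]}(\phi_s)_*\nu\,ds$. Exactly as in the proof of Lemma~\ref{l.componententropy}, $\bar\nu$ is $\phi_t$-invariant; since $\phi_s$ commutes with $f$ it is a conjugacy of $f$ with itself, so $h_{(\phi_s)_*\nu}(f)=h_\nu(f)$ for every $s$, and by affineness of the metric entropy $h_{\bar\nu}(f)=h_\nu(f)$. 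Hence $h_{\bar\nu}(\phi_t)=h_{\bar\nu}(f)=h_{top}(\phi_t)$, so $\bar\nu$ is a measure of maximal entropy for $\phi_t$. (Alternatively, the same averaging applied to an arbitrary $\nu\in M_{inv}(f)$ yields the variational principle $h_{top}(\phi_t)=\sup_{\mu\in M_{inv}(\phi_t)}h_\mu(\phi_t)$, after which one simply maximizes the upper semi-continuous entropy function on the compact set $M_{inv}(\phi_t)$.)

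I do not expect a genuine obstacle here: the entire content sits in Bowen's theorem, which is applied to $f$. The only thing requiring care is the translation between the flow and its time-one map --- that $\phi_t$-invariant measures sit inside $f$-invariant ones, that flow entropy equals time-one entropy, and that the averaging operator $\nu\mapsto\int_{[0,1]}(\phi_s)_*\nu\,ds$ preserves metric entropy --- and each of these is elementary and already used in Lemma~\ref{l.componententropy}.
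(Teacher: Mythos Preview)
Your proposal is correct and matches the paper's approach: the paper does not give a proof but simply cites Bowen~\cite{B72} and presents the lemma as a ``flow version'' corollary of Lemma~\ref{l.componententropy}, which is precisely the reduction to the time-one map that you carry out. Your explicit averaging argument for the existence of a maximal-entropy measure is a clean way to fill in the detail the paper leaves implicit.
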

\begin{lemma}\cite[Lemma 2.3]{LVY}\label{l.uppersemicontinuity}
Let $\cU$ be a $C^1$ open set of flows which are $\varepsilon$-entropy expansive for some $\varepsilon  > 0$. Then the topological entropy varies in an upper
semi-continuous manner for flows in $\cU$.
\end{lemma}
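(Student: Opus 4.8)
The plan is to view this as the flow version of Bowen's theorem that topological entropy is upper semi-continuous on any family of systems that are entropy expansive at one common scale, and to argue through the variational principle rather than through direct orbit-counting. First I would reduce to time-one maps: since the topological entropy of a flow is by definition that of its time-one map, since $\psi\mapsto\psi_1$ is continuous from $C^1$ (indeed $C^0$) flows to $C^0$ self-maps of $M$, and since ``$\psi$ is $\varepsilon$-entropy expansive'' means exactly ``$\psi_1$ is $\varepsilon$-entropy expansive'', the statement reduces to the following assertion for maps: if $g_k\to g$ uniformly on $M$ and every $g_k$ is $\varepsilon$-entropy expansive, then $\limsup_k h_{top}(g_k)\le h_{top}(g)$.

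To prove this, pass to a subsequence (not relabeled) along which $h_{top}(g_k)\to L:=\limsup_k h_{top}(g_k)$. By the variational principle choose $g_k$-invariant probabilities $\mu_k$ with $h_{\mu_k}(g_k)>h_{top}(g_k)-1/k$, and by weak-$*$ compactness pass to a further subsequence with $\mu_k\to\mu$; since $g_k\to g$ uniformly, $\mu$ is $g$-invariant. Now fix once and for all a finite Borel partition $\cA$ of $M$ with $\diam\cA<\varepsilon$ and $\mu(\partial\cA)=0$ (there are only countably many ``bad'' radii, so this is possible). Since each $g_k$ is $\varepsilon$-entropy expansive and $\diam\cA<\varepsilon$, Bowen's partition theorem \cite[Theorem~3.5]{B72} gives $h_{\mu_k}(g_k)=h_{\mu_k}(\cA)$, the entropy of $\mu_k$ relative to $g_k$ and $\cA$; and by subadditivity of the static entropy $H_\nu(\cB)=-\sum_{B\in\cB}\nu(B)\log\nu(B)$, one has $h_{\mu_k}(\cA)\le\frac1N H_{\mu_k}\!\left(\bigvee_{i=0}^{N-1}g_k^{-i}\cA\right)$ for every fixed $N$.

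The main obstacle is the continuity of the finite-step entropy: $H_{\mu_k}\!\left(\bigvee_{i=0}^{N-1}g_k^{-i}\cA\right)\to H_\mu\!\left(\bigvee_{i=0}^{N-1}g^{-i}\cA\right)$ for each fixed $N$, the difficulty being that both the measure $\mu_k$ and the iterated partition move with $k$. I would prove $\mu_k(P_k)\to\mu(P)$ for each atom $P_k=\bigcap_{i=0}^{N-1}g_k^{-i}A_{j_i}$ (with limit $P=\bigcap_i g^{-i}A_{j_i}$) by sandwiching each indicator $\mathbf 1_{A_j}$ between continuous functions $f_j^-\le\mathbf 1_{A_j}\le f_j^+$ with $\int(f_j^+-f_j^-)\,d\mu$ arbitrarily small — this is exactly where $\mu(\partial A_j)=0$ is used — then forming the products over $i=0,\dots,N-1$ and passing to the limit on these continuous integrands using that $g_k^i\to g^i$ uniformly and $\mu_k\to\mu$ weak-$*$; the sandwich error is bounded by $N\sum_j\int(f_j^+-f_j^-)\,d\mu$, using $g$-invariance of $\mu$ together with the elementary estimate $|\prod_i a_i-\prod_i b_i|\le\sum_i|a_i-b_i|$ for $a_i,b_i\in[0,1]$. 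Since the partitions have a bounded number of atoms and $s\mapsto-s\log s$ is continuous (including at $s=0$), the convergence of $H$ follows.

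Finally I would assemble: for every fixed $N$,
$$
L=\lim_k h_{\mu_k}(g_k)=\lim_k h_{\mu_k}(\cA)\le\lim_k\frac1N H_{\mu_k}\!\left(\bigvee_{i=0}^{N-1}g_k^{-i}\cA\right)=\frac1N H_\mu\!\left(\bigvee_{i=0}^{N-1}g^{-i}\cA\right),
$$
and then let $N\to\infty$, so the right-hand side decreases to $h_\mu(\cA)$ (now relative to $g$); hence $L\le h_\mu(\cA)\le h_{top}(g)$ by the variational principle, which is the desired upper semi-continuity. I would also note that Bowen's original route via $(n,\varepsilon)$-spanning sets works in principle but is more delicate here, because concatenating finite-length spanning sets across long time windows requires a shadowing property that is not available at this level of generality; routing the argument through invariant measures avoids this.
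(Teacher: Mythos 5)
Your argument is correct, and it follows the route one would expect for this statement: reduce to the time-one map, pass to weak-$*$ limits of near-maximal-entropy measures, invoke Bowen's partition theorem (\cite[Theorem 3.5]{B72}) at a fixed scale $\diam\cA<\varepsilon$ that works uniformly on the $C^1$-open set of $\varepsilon$-entropy-expansive systems, and then prove convergence of the finite-step joint distributions. The paper itself gives no proof of this lemma (it simply cites \cite[Lemma 2.3]{LVY}), but your measure-theoretic argument is the standard one behind that citation, and the details you supply are sound.

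A few small remarks worth recording.
\begin{itemize}
\item The step $\mu_k(P_k)\to\mu(P)$ is indeed the crux, and your sandwiching argument handles it correctly. Two conditions are being used there that deserve to be stated explicitly: the approximants $f_j^\pm$ should be taken with values in $[0,1]$ so that the elementary inequality $\bigl|\prod_i a_i-\prod_i b_i\bigr|\le\sum_i|a_i-b_i|$ applies, and the passage from $\int|f_{j_i}^\pm\circ g^i-\mathbf 1_{A_{j_i}}\circ g^i|\,d\mu$ to $\int|f_{j_i}^\pm-\mathbf 1_{A_{j_i}}|\,d\mu$ uses $g$-invariance of $\mu$, which in turn was established from $g_k\to g$ uniformly and $\mu_k\to\mu$ weak-$*$. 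You flag both, so this is complete.
\item You correctly observe that the alternative route through $(n,\varepsilon)$-spanning counts is delicate: although a $\delta/3$-spanning set for $g$ is a $\delta$-spanning set for $g_k$ once the first $n$ iterates are uniformly close, this closeness degrades with $n$, so the naive $\limsup_n$ exchange does not go through. The variational-principle route you chose avoids this entirely, which is precisely why it is the standard proof.
\item One tiny point of hygiene: the partition $\cA$ depends on the limit measure $\mu$, which itself was obtained after passing to a subsequence; this is fine for the $\limsup$ argument since every subsequential limit obeys the same bound, but it is worth making the order of quantifiers explicit if this were to be written up.
\end{itemize}
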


The {\em linear Poincar\'e flow} $\psi_t$ is defined as following: denote the normal bundle
of $\phi_t$ over $\Lambda$ by 
$$
N_\Lambda = \bigcup_{x\in\Lambda\setminus\Sing(X)}N_x,
$$
where $N_x$ is the orthogonal complement of the flow direction $X(x)$, i.e.,
$$
N_x = \{v \in T_xM: v \perp X(x)\}.
$$
Denote the orthogonal projection of $T_xM$ to $N_x$ by $\pi_x$. Given $v \in N_x$ for a regular point $x \in
M \setminus  \Sing(X)$ and recall that $\Phi_t$ is the tangent flow,  we can define $\psi_t(v)$ as the  orthogonal projection of $\Phi_t(v)$ onto $N_{\phi_t(x)}$, i.e.,
$$
\psi_t(v) = \pi_{\phi_t(x)}(\Phi_t(v)) = \Phi_t(v) -\frac{< \Phi_t(v), X(\phi_t(x)) >}{\|X(\phi_t(x))\|^2}X(\phi_t(x)),
$$
where $< .,. >$ is the inner product on $T_xM$ given by the Riemannian metric. The following is the flow version of the Oseledets theorem:
\begin{proposition}
For $\mu$ almost every $x$, there exists $k = k(x) \in \NN$ and real numbers
$$
\hat{\lambda}_1(x) > \cdots > \hat{\lambda}_k(x)
$$
and a $\psi_t$ invariant measurable splitting on the normal bundle:
$$
N_x = \hat{E}^1_x \oplus \cdots\oplus \hat{E}^k_x,
$$
such that
$$
\lim_{t\to\pm\infty}\frac1t\log \|\psi_t(v_i)\| = \hat{\lambda}λ_i(x)\mbox{ for every non-zero vector }v_i \in \hat{E}^i_x.
$$
\end{proposition}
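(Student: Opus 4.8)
The plan is to recognize the statement as the invertible multiplicative ergodic theorem of Oseledets, applied not directly to the flow but to the time-one map $f=\phi_1$ together with the linear cocycle $\psi_1$ over $f$ acting on the normal bundle, and then to upgrade the resulting integer-time limits to continuous time. First I would restrict attention to $\Lambda_0:=\Lambda\setminus\Sing(X)$: since $\mu$ is non-trivial, $\mu(\Lambda_0)=1$, and $\Lambda_0$ is $f$-invariant because $\Sing(X)$ is. Over $\Lambda_0$ the normal bundle $N=\bigcup_{x\in\Lambda_0}N_x$ is a genuine continuous vector bundle of constant rank $d-1$ (as $X(x)\ne 0$ varies continuously there), and $\psi_1\colon N_x\to N_{f(x)}$ is a continuous linear cocycle over $f|_{\Lambda_0}$.

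Next I would verify the integrability hypothesis of Oseledets' theorem. Because $M$ is compact and $X$ is $C^1$, the tangent flow satisfies $C_0:=\sup_{x\in M}\max\{\|\Phi_1(x)\|,\|\Phi_{-1}(x)\|\}<\infty$; since each $\pi_y$ is an orthogonal projection and hence of norm at most $1$, we get $\|\psi_1(x)\|\le\|\Phi_1(x)\|\le C_0$, and from the cocycle identity $\psi_1(x)^{-1}=\psi_{-1}(f(x))$ one gets $\|\psi_1(x)^{-1}\|\le\|\Phi_{-1}(f(x))\|\le C_0$ as well. So $\log\|\psi_1^{\pm1}\|$ is bounded, in particular $\mu$-integrable, and the invertible multiplicative ergodic theorem (which requires no ergodicity of $\mu$) applies: for $\mu$-a.e. $x$ it produces $k=k(x)\in\NN$, exponents $\hat\lambda_1(x)>\cdots>\hat\lambda_k(x)$, and a measurable $\psi_1$-invariant splitting $N_x=\hat E^1_x\oplus\cdots\oplus\hat E^k_x$ with $\lim_{n\to\pm\infty}\frac1n\log\|\psi_n(v_i)\|=\hat\lambda_i(x)$ for $0\ne v_i\in\hat E^i_x$, $n\in\ZZ$.

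Then I would carry out two routine upgrades. To pass from integer to real time, write $t=n+s$ with $n=\lfloor t\rfloor$, $s\in[0,1)$; the cocycle identity gives $\psi_t(v)=\psi_s(\phi_n(x))\,\psi_n(v)$, and compactness of $M$ together with $s\in[0,1]$ furnishes a uniform $C_1\ge1$ with $\|\psi_s(y)^{\pm1}\|\le C_1$ for all $y\in\Lambda_0$, so $\big|\log\|\psi_t(v)\|-\log\|\psi_n(v)\|\big|\le\log C_1$ and dividing by $t$ yields the same limit as $t\to\pm\infty$. To obtain $\psi_t$-invariance of the splitting for all $t$ (not merely $\psi_1$-invariance), I would use $f\circ\phi_s=\phi_s\circ f$, which via the cocycle identity gives $\psi_n(\phi_s(x))=\psi_s(\phi_n(x))\,\psi_n(x)\,\psi_s(x)^{-1}$; thus $\psi_s(x)$ conjugates the $f$-cocycle based at $x$ to the one based at $\phi_s(x)$, so Oseledets regularity propagates along flow orbits (letting the full-measure regular set be chosen $\phi_t$-invariant), and uniqueness of the Oseledets decomposition forces $\psi_s(x)(\hat E^i_x)=\hat E^i_{\phi_s(x)}$ and $\hat\lambda_i(\phi_s(x))=\hat\lambda_i(x)$ for every $s$.

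I do not anticipate a serious obstacle. \emph{The only point requiring care} is that $N_x$ degenerates as $x\to\Sing(X)$, so the cocycle does not extend continuously to all of $\Lambda$; this is harmless, because non-triviality of $\mu$ lets us discard $\Sing(X)$, and on $\Lambda_0$ every norm bound used above comes purely from compactness of $M$ and the $C^1$ bound on $X$ — no lower bound on $\|X(x)\|$ is needed, the orthogonal projections having norm at most $1$. Everything beyond invoking the classical theorem is the standard reduction of a continuous-time cocycle to its time-one map.
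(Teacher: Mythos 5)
The paper presents this proposition without proof, simply labeling it ``the flow version of the Oseledets theorem,'' so there is no in-paper argument to compare against; I am therefore checking your proof on its own merits. Your reduction to the time-one map is the standard route to this statement, and the argument is correct: the identity $\psi_1(x)^{-1}=\psi_{-1}(f(x))$, together with the fact that each orthogonal projection $\pi_y$ has norm at most $1$, gives $\|\psi_1^{\pm1}\|\le\sup_{y\in M}\max\{\|\Phi_1(y)\|,\|\Phi_{-1}(y)\|\}$ uniformly on $\Lambda\setminus\Sing(X)$, so the (non-ergodic) invertible MET applies; the uniform bound $\|\psi_s^{\pm1}\|\le C_1$ for $s\in[0,1]$ lets you pass to real $t$; and the relation $\psi_n(\phi_s(x))=\psi_s(\phi_n(x))\,\psi_n(x)\,\psi_s(x)^{-1}$ shows the regular set is genuinely $\phi_t$-invariant (no measurable selection needed, since existence of the integer-time limits is preserved under this bounded conjugacy) and forces $\psi_s(x)\hat E^i_x=\hat E^i_{\phi_s(x)}$ by the usual characterization of Oseledets subspaces through growth rates. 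You also correctly single out the only delicate point: the normal bundle degenerates at $\Sing(X)$, but because $\mu$ gives zero mass to $\Sing(X)$ and the norm bounds above use only compactness of $M$ (no lower bound on $\|X\|$), this poses no obstruction. The proof is sound.
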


Now we state the relation between Lyapunov exponents and the Oseledets splitting
for $\psi_t$ and for $f=\phi_1$:
\begin{theorem}
For $\mu$ almost every $x$, denote by $\lambda_1(x) > \cdots > \lambda_k(x)$ the Lyapunov
exponents and  
$$
T_xM = E^1_x \oplus\cdots\oplus E^k_x 
$$
the Oseledets splitting of $\mu$ for $f$. Then
$$
N_x = \pi_x(E^1_x)\oplus\cdots\oplus \pi_x(E^k_x)
$$ 
is the Oseledets splitting of $\mu$ for the linear Poincar\'e flow $\psi_t$. Moreover, the Lyapunov
exponents of $\mu$ (counting multiplicity) for  $\psi_t$ is the subset of the exponents
for $f$ obtained by removing one of the zero exponent which comes from the flow direction.
\end{theorem}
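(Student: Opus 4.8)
The plan is to recognize $\psi_t$ as the cocycle induced by the tangent flow $\Phi_t=D\phi_t$ on the quotient $T_xM/\RR X(x)$, and to show that passing to this quotient merely deletes the zero exponent carried by the flow direction. Throughout I work on the full-measure set of regular points (so $X(x)\neq 0$ and $\pi_x$ is defined) that are also Oseledets-regular for $f$, equivalently for the cocycle $\Phi_t$.

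\emph{Step 1: the flow direction has exponent $0$.} Since $\Phi_t X(x)=X(\phi_t x)$ and $\|X\|$ is bounded on $M$, for $t\to+\infty$ one has $\tfrac1t\log\|\Phi_t X(x)\|\le\tfrac1t\log\|X\|_\infty\to 0$, and for $t\to-\infty$ the same bound gives $\tfrac1t\log\|\Phi_t X(x)\|\ge\tfrac1t\log\|X\|_\infty\to 0$. As $X(x)$ is a nonzero vector it lies in a single Oseledets space $E^{i_0}_x$ of $f$, along which $\tfrac1t\log\|\Phi_t v\|\to\lambda_{i_0}$ as $t\to\pm\infty$; the two estimates force $\lambda_{i_0}=0$. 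So $0$ occurs among $\lambda_1>\dots>\lambda_k$ and $X(x)\in E^{i_0}_x$. (This avoids having to know that $\log\|X\|\in L^1(\mu)$.)

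\emph{Step 2: the candidate splitting.} From the formula for $\psi_t$ one checks directly that $\psi_t\circ\pi_x=\pi_{\phi_t x}\circ\Phi_t$ (the flow-direction part of $\Phi_t v$ is killed by $\pi_{\phi_t x}$) and that $\psi_s\circ\psi_t=\psi_{s+t}$, so $\psi_t\colon N_x\to N_{\phi_t x}$ is invertible. Combined with $\Phi_t(E^i_x)=E^i_{\phi_t x}$ this gives $\psi_t(\pi_x(E^i_x))=\pi_{\phi_t x}(E^i_{\phi_t x})$, so each $\pi_x(E^i_x)$ is a $\psi_t$-invariant measurable subbundle of the normal bundle. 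Since $X(x)\in E^{i_0}_x$ and $\bigoplus_iE^i_x$ is direct, $E^i_x\cap\RR X(x)=\{0\}$ for $i\neq i_0$, hence $\pi_x|_{E^i_x}$ is injective and $\dim\pi_x(E^i_x)=\dim E^i_x$; while $\ker(\pi_x|_{E^{i_0}_x})=\RR X(x)$, so $\dim\pi_x(E^{i_0}_x)=\dim E^{i_0}_x-1$. The dimensions add up to $\dim N_x$, so $N_x=\bigoplus_i\pi_x(E^i_x)$ is a direct sum.

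\emph{Step 3: identifying the exponents.} For $i\neq i_0$ and $0\neq v\in E^i_x$, $\psi_t(\pi_x v)=\pi_{\phi_t x}(\Phi_t v)$ with $\Phi_t v\in E^i_{\phi_t x}$, and $\|\pi_{\phi_t x}(\Phi_t v)\|=\|\Phi_t v\|\sin\angle\big(\Phi_t v,X(\phi_t x)\big)$; since $X(\phi_t x)\in E^{i_0}_{\phi_t x}\subset\bigoplus_{j\neq i}E^j_{\phi_t x}$, this angle is at least $\angle\big(E^i_{\phi_t x},\bigoplus_{j\neq i}E^j_{\phi_t x}\big)$, which is subexponential in $t$ by the Oseledets theorem. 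Together with $\|\pi_{\phi_t x}(\Phi_t v)\|\le\|\Phi_t v\|$ this yields $\tfrac1t\log\|\psi_t(\pi_x v)\|\to\lambda_i$ as $t\to\pm\infty$, so $\pi_x(E^i_x)$ is the $\psi_t$-Oseledets space for $\lambda_i$ with multiplicity $\dim E^i_x$. For $i=i_0$ the same inequality only gives $\limsup_{t\to+\infty}\tfrac1t\log\|\psi_t(\pi_x v)\|\le 0$. To fix the exponents on $\pi_x(E^{i_0}_x)$, write $\Phi_t$ in block form relative to the orthogonal splittings $T_xM=\RR X(x)\oplus N_x$ and $T_{\phi_t x}M=\RR X(\phi_t x)\oplus N_{\phi_t x}$: it is upper triangular with diagonal blocks $\|X(\phi_t x)\|/\|X(x)\|$ and $\psi_t$, so $\det\psi_t=\det\Phi_t\cdot\|X(x)\|/\|X(\phi_t x)\|$. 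Dividing by $t$, letting $t\to\infty$, and using Step 1, the sum (with multiplicity) of the $\psi_t$-exponents equals $\sum_i(\dim E^i_x)\lambda_i=\sum_{i\neq i_0}(\dim E^i_x)\lambda_i$, exactly the amount already contributed by the spaces $\pi_x(E^i_x)$, $i\neq i_0$; hence the exponents on $\pi_x(E^{i_0}_x)$ sum to $0$, and being all $\le 0$ they are all $0$. This identifies $N_x=\bigoplus_i\pi_x(E^i_x)$ as the Oseledets splitting for $\psi_t$ and shows its exponent list is that of $f$ with one zero exponent removed.

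\emph{Main obstacle.} The real work is the zero-exponent block: the angle argument used for $i\neq i_0$ collapses when $\Phi_t v$ and $X(\phi_t x)$ both lie in $E^{i_0}$, and the determinant identity is what lets us conclude that no cancellation hides a nonzero exponent in $\pi_x(E^{i_0}_x)$. A lesser subtlety is establishing the zero exponent of the flow direction without integrability of $\log\|X\|$, which the boundedness of $X$ together with two-sided Oseledets settles.
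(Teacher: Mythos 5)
The paper states this theorem as background in Section 2 without supplying a proof, so there is no in-paper argument to compare against; what follows is therefore a correctness review of your proposal on its own terms. Your proof is correct, and the block-triangular form of $\Phi_t$ with respect to $\RR X(x)\oplus N_x$ is exactly the right organizing observation: Step 1's two-sided estimate pins the flow exponent at $0$, Step 2's dimension count establishes that the pushed-forward spaces give a direct decomposition of $N_x$, and Step 3's angle argument identifies the exponents outside the flow block. Two small remarks. In Step 1 the sentence ``As $X(x)$ is a nonzero vector it lies in a single Oseledets space'' is not automatic for an arbitrary nonzero vector; what actually forces it is the very pair of estimates that follow (the $t\to+\infty$ bound caps the largest exponent carried by $X(x)$ at $0$, the $t\to-\infty$ bound floors the smallest at $0$), so the single-space membership and $\lambda_{i_0}=0$ are reached simultaneously — the logic is sound but the phrasing inverts it. In Step 3, the determinant identity, while correct and instructive, is not actually needed: since $X(x)\in E^{i_0}_x$ one has $\pi_x(E^{i_0}_x)=E^{i_0}_x\cap N_x$, so for $w$ in this space $\Phi_t w\in E^{i_0}_{\phi_t x}$ and $\|\psi_t w\|\le\|\Phi_t w\|$; reading that inequality for $t\to+\infty$ \emph{and} $t\to-\infty$, exactly as in Step 1, directly squeezes every $\psi_t$-exponent on $\pi_x(E^{i_0}_x)$ to $0$ without any global bookkeeping of $\det\psi_t$. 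Either route is valid, but the two-sided version mirrors Step 1 and avoids the implicit appeal to the trace/determinant part of Oseledets for $\psi_t$.
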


\begin{definition}
$\mu$ is called a {\em hyperbolic measure} for the flow $\phi_t$ if it is an ergodic
measure of $\phi_t$ and all the Lyapunov exponents for the linear Poincar\'e flow $\psi_t$ are non-vanishing. In other words, if we view $\mu$ as an invariant measure for the time-one map $f$, then $\mu$ has exactly one exponent which is zero, given by the flow direction.  We call the number of the
negative exponents of $\mu$, counting multiplicity, its \emph{index}.
\end{definition}

%%%%%%%%%%%%%%%%%%%%%%%%%%%%%%%%%%%%%%%%%%
%%%%%%%%%%%   SECTION: ENTROPY EXPANSIVE
%%%%%%%%%%%%%%%%%%%%%%%%%%%%%%%%%%%%%%%%%%
\section{Positive topological entropy and Entropy expansiveness}\label{s.extropyexpansive}
In this section we prove Theorem~\ref{m.h-expansive} and Theorem~\ref{m.positiveentropy}.

%%%%%%%%%%%%%%%%%%%%%%%%%%%%%%%%%%%%%%%%%%
%%%%%%%%%%%   SUBSECTION: ENTROPY EXPANSIVE
%%%%%%%%%%%%%%%%%%%%%%%%%%%%%%%%%%%%%%%%%%
\subsection{Entropy expansiveness}\label{ss.expansive}
We will prove the following theorem, which is a stronger version of Theorem~\ref{m.h-expansive}.
\begin{main}\label{m.almostexpansive}
Let $\Lambda$ be a compact invariant set that is sectional hyperbolic for a $C^1$ flow $\phi_t$, with all the singularities in $\Lambda$ hyperbolic. Then there exists $\delta>0$ such that the set
$$
\Gamma_\delta: = \{x: \mbox{there exists $\tilde\delta(x)>0$ such that }B_\infty(x,\delta) \subset \phi_{(-\tilde\delta(x),\tilde\delta(x))}(x)\} 
$$
satisfies $\mu(\Gamma_\delta) = 1$ for every ergodic, invariant measure $\mu$. Furthermore, the same holds for all $\tilde\phi_t$ in a $C^1$ neighborhood of $X$ and for the maximal invariant set $\tilde{\Lambda}$ of $\tilde\phi_t$ in a small neighborhood of $\Lambda$.
 
\end{main}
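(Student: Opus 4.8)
The plan is to analyze the time-one map $f=\phi_1$ near the sectional hyperbolic set $\Lambda$ using the dominated splitting $E^s\oplus F^{cu}$ and the fake foliations, treating regular points and singularities somewhat differently, and then to apply the ergodic-theoretic machinery to conclude that the bad set is null.

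First I would fix $\delta>0$ small enough that the cone fields $C_a(E^s)$, $C_a(F^{cu})$ and the fake foliations $\widehat{\mathcal W}^s$, $\widehat{\mathcal W}^{cu}$ are defined on a neighborhood $U$ of $\Lambda$ (and persist, with uniform estimates, for all $\tilde\phi_t$ in a $C^1$ neighborhood of $X$ and the maximal invariant set $\tilde\Lambda\subset U$), and small enough that Lemmas~\ref{l.invariantcone}, \ref{l.boundedvolumebowen} apply. The key structural input, to be established as the lemmas advertised in the introduction (Lemma~\ref{l.bowenball}, Corollary~\ref{c.saturation}, Lemma~\ref{l.expansion.near}), is: (i) for $x\in\tilde\Lambda$, the two-sided Bowen ball $B_\infty(x,\delta)$ lies inside the fake center-stable leaf $\widehat{\mathcal W}^{cu}(x)$ intersected with the fake stable leaf, i.e. inside a disk tangent to the $F^{cu}$-cone through $x$; (ii) on that disk, iterates of $f$ expand volume at a definite rate coming from sectional expansion, except possibly inside a fixed neighborhood $V$ of $\Sing(X)$; and (iii) near each hyperbolic singularity $\sigma$, using the local product structure of the fake foliations one gets an expanding/contracting splitting of the center-stable direction transverse to the flow, so that a point $y\in B_\infty(x,\delta)$ whose orbit stays near $\sigma$ forever must lie on the flow orbit of $x$.

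The main argument then runs as follows. Let $\mu$ be an ergodic invariant measure. If $\mu=\delta_\sigma$ for a singularity, the claim is immediate from (iii) (or trivially, as $B_\infty(\sigma,\delta)$ is a single orbit by hyperbolicity and the local product structure). Otherwise $\mu$ is non-trivial, $\mu(\Sing)=0$, and by Poincaré recurrence and the pointwise ergodic theorem, for $\mu$-a.e.\ $x$ the forward and backward orbit of $x$ spends a positive fraction of time outside the singular neighborhood $V$; more precisely the orbit returns to $\tilde\Lambda\setminus V$ with positive frequency in both time directions. Fix such an $x$ and suppose $y\in B_\infty(x,\delta)$. By (i), $y$ lies on a $\dim F^{cu}$-disk $D\ni x$ tangent to the local $F^{cu}$-cone, and $f^n(D\cap B_\infty(x,\delta))$ stays $\delta$-close to $f^n(x)$ for all $n\in\ZZ$. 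By Lemma~\ref{l.boundedvolumebowen} the volume of $f^n(D\cap B_\infty(x,\delta))$ is bounded by $K$ for all $n$, in both time directions. On the other hand, combining (ii) with the positive-frequency returns to $\tilde\Lambda\setminus V$, the volume of the piece of $D$ through $y$ and $x$ would grow without bound unless that piece is degenerate — i.e.\ unless $y$ and $x$ lie on a common fake-leaf sub-disk of zero dimension transverse to the flow, which by Corollary~\ref{c.saturation} (the flow saturates the fake foliation on the Bowen ball) forces $y\in\phi_{(-\tilde\delta(x),\tilde\delta(x))}(x)$. Hence $x\in\Gamma_\delta$, so $\mu(\Gamma_\delta)=1$. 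The same estimates, being uniform in a $C^1$ neighborhood, give the statement for all nearby $\tilde\phi_t$ and $\tilde\Lambda$.

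The hard part will be step (iii) and its interaction with (ii): controlling what happens to the transverse expansion when an orbit spends a long (possibly unbounded) stretch of time inside the neighborhood $V$ of a singularity, without recourse to linearization or to $C^{1+}$ regularity or eigenvalue conditions. The whole point of using the fake foliations is to get a genuine $f$-invariant center foliation near $\sigma$ whose leaves carry a contraction/expansion dichotomy purely from domination plus sectional expansion; making the volume-growth bookkeeping along the disk $D$ insensitive to these long singular excursions — i.e.\ showing the expansion accumulated outside $V$ is never cancelled inside $V$ — is the crux, and is exactly where Lemma~\ref{l.expansion.near} must do its work. A secondary technical point is ensuring all constants ($\delta$, the cone widths, the volume bound $K$, the expansion rate) can be chosen uniformly over a $C^1$-neighborhood and over the maximal invariant set $\tilde\Lambda$, which follows from the robustness of dominated splittings (\cite[Appendix B]{BDV}) and the openness of the sectional-expansion estimate, but must be stated carefully.
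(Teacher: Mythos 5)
Your step (i) agrees with the paper (Lemma~\ref{l.bowenball}: $B_\infty(x,\delta)\subset\cF^{cu}_x(x)$), but the way you propose to exploit sectional expansion has a genuine gap that cannot be papered over, even away from singularities. You argue that, since $f^n(D\cap B_\infty(x,\delta))$ has $\dim F^{cu}$-dimensional volume bounded by $K$ for all $n$ (Lemma~\ref{l.boundedvolumebowen}) while sectional expansion forces the $\dim F^{cu}$-volume of any disk tangent to the $F^{cu}$-cone to grow, ``the piece of $D$ through $y$ and $x$ would grow without bound unless that piece is degenerate.'' But degeneracy of the $\dim F^{cu}$-volume is a strictly weaker conclusion than $y\in\phi_{(-\tilde\delta,\tilde\delta)}(x)$. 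The Bowen ball is expected to be a one-dimensional flow arc, and every one-dimensional (or, more generally, Lebesgue-null) subset of the fake $cu$-leaf already has zero $\dim F^{cu}$-volume, so the volume comparison produces no contradiction whatsoever if the Bowen ball is, say, a transverse curve or any other null set larger than the flow arc. In short, a top-dimensional volume estimate cannot distinguish ``$B_\infty(x,\delta)$ is the orbit arc'' from ``$B_\infty(x,\delta)$ is some other null set'', which is exactly the distinction Theorem~\ref{m.almostexpansive} requires.

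The paper avoids this by working pointwise rather than with $\dim F^{cu}$-volume: it introduces the projection $P_x$ of $B_\infty(x,\delta)$ onto the normal plane $N_x$ inside the fake $cu$-leaf and studies the scalar quantity $d^*_x(x,P_x(y))$, showing (Lemma~\ref{l.expansiong.away}) that one application of $f$ multiplies this quantity by at least $b_0\,\|X(x)\|/\|X(x_1)\|$ via a two-dimensional (parallelogram-area) sectional-expansion estimate, and then (Lemma~\ref{l.expansion.near}) that a full traversal of a singular neighborhood yields a definite factor $L>1$ despite the unbounded transit time. Iterating along the decomposition of the orbit into ``away'' and ``near'' segments gives $d^*_{x_{T'_n}}(x_{T'_n},P_{x_{T'_n}}(y_{T'_n}))\ge b_0^n\,d^*_{x_{T_1}}(x_{T_1},P_{x_{T_1}}(y_{T_1}))$, and since the left-hand side is bounded, $d^*_{x_{T_1}}$ must vanish, which directly forces $y$ onto the flow arc of $x$. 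This one-dimensional transverse-distance argument is the mechanism, not volume growth; your proposal needs to be rebuilt around it. Two secondary points: Lemma~\ref{l.boundedvolumebowen} as stated controls only forward iterates, so invoking it ``in both time directions'' needs justification; and the decomposition for measures charging singular neighborhoods must track the flow-speed ratios $\|X(x)\|/\|X(\cdot)\|$ carefully (the constant $D'$ in the paper's \eqref{n1}), which your positive-frequency-recurrence sketch does not yet do.
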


To see that Theorem~\ref{m.almostexpansive} implies Theorem~\ref{m.h-expansive}, we use a  new criterion for entropy expansiveness, given in~\cite{LVY} as Proposition 2.4. For that purpose, we make the following definition:
\begin{definition}
Let $g$ be a homeomorphism on $M$. For $\varepsilon > 0$, we say that $g$ is {\em $\varepsilon$-almost entropy expansive} if for every  $g$-invariant, ergodic
measure $\mu$ and for $\mu$ almost every point $x$, we have $$h_{top}(g, B_\infty(x, \varepsilon)) = 0.$$
\end{definition}
Then~\cite[Proposition 2.4] {LVY} states that:
\begin{lemma}\label{l.almostexpansive}
g is $\varepsilon$-almost entropy expansive if and only if it is $\varepsilon$-entropy expansive.
\end{lemma}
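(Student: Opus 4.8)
The direction ``$\varepsilon$-entropy expansive $\Rightarrow$ $\varepsilon$-almost entropy expansive'' is immediate: if $\sup_{x\in M}h_{top}(g,B_\infty(x,\varepsilon))=0$, then $h_{top}(g,B_\infty(x,\varepsilon))=0$ for \emph{every} $x$, hence for $\mu$-a.e.\ $x$ and every $g$-invariant ergodic measure $\mu$.

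For the converse I would argue by contraposition: suppose $g$ is not $\varepsilon$-entropy expansive and fix $x_0$ with $c_0:=h_{top}(g,B_\infty(x_0,\varepsilon))>0$; the goal is to produce a $g$-invariant ergodic $\nu$ with $h_{top}(g,B_\infty(x,\varepsilon))>0$ on a set of positive $\nu$-measure. The key elementary fact is that, $g$ being a homeomorphism, the infinite Bowen balls are permuted by $g$: $g\bigl(B_\infty(x,\varepsilon)\bigr)=B_\infty(gx,\varepsilon)$, so $h_{top}(g,B_\infty(\cdot,\varepsilon))$ is $g$-invariant; together with monotonicity of $h_{top}(g,\cdot)$ under inclusion this shows that the compact $g$-invariant set $Y:=\overline{\bigcup_{n\in\ZZ}B_\infty(g^nx_0,\varepsilon)}$ satisfies $h_{top}(g|_Y)\ge h_{top}(g,B_\infty(x_0,\varepsilon))=c_0>0$.

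The heart of the argument is to convert this positive \emph{topological} entropy near $\Orb(x_0)$ into a measure that genuinely detects Bowen balls, using Misiurewicz's realization of topological entropy by invariant measures. Pick $\delta>0$ small enough that $\limsup_n\tfrac1n\log\Card S_n>0$ along some $(n,\delta)$-separated sets $S_n\subset B_\infty(x_0,\varepsilon)$ (possible since $h_{top}(g,B_\infty(x_0,\varepsilon))=c_0>0$). Let $\sigma_n$ be the uniform probability on $S_n$ and $\mu_n=\tfrac1n\sum_{k=0}^{n-1}g^k_*\sigma_n$; a weak-$*$ subsequential limit $\mu$ is $g$-invariant with $h_\mu(g)>0$ by the usual separated-set construction in the variational principle. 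Each $\mu_n$ is carried by $V:=\bigcup_{n\in\ZZ}B_\infty(g^nx_0,\varepsilon)$, since $g^k_*\sigma_n$ lives on $g^k\bigl(B_\infty(x_0,\varepsilon)\bigr)=B_\infty(g^kx_0,\varepsilon)$; checking that no mass escapes to $Y\setminus V$ gives $\mu(V)=1$, and this is the step where one handles the $G_\delta$ nature of the Bowen balls with some care. As $V$ is $g$-invariant and Borel, in the ergodic decomposition $\mu=\int\nu\,dP(\nu)$ one has $\nu(V)=1$ for $P$-a.e.\ $\nu$, so we may take $\nu$ with $\nu(V)=1$ and $h_\nu(g)>0$. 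Finally, for $z\in B_\infty(g^nx_0,\varepsilon)$ the triangle inequality gives $B_\infty(g^nx_0,\varepsilon)\subset B_\infty(z,2\varepsilon)$, whence $h_{top}(g,B_\infty(z,2\varepsilon))\ge h_{top}(g,B_\infty(g^nx_0,\varepsilon))=c_0>0$ for every $z\in V$, in particular for $\nu$-a.e.\ $z$. Hence $g$ fails to be $2\varepsilon$-almost entropy expansive.

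I expect the main obstacle to be matching the scale exactly. The triangle-inequality step costs a factor comparable to $2$, and because $B_\infty(x,\varepsilon)$ is a \emph{two-sided}, all-times condition, knowing that $z$ is $\varepsilon$-close to $\Orb(g^nx_0)$ at one instant is far weaker than $z\in B_\infty(g^nx_0,\varepsilon)$; so there is no cheap way to replace $2\varepsilon$ by $\varepsilon$ in the conclusion. For the applications here this is harmless, since Theorem~\ref{m.almostexpansive} produces almost entropy expansiveness at \emph{every} sufficiently small scale, so one may run the argument with $\varepsilon/2$ in place of $\varepsilon$. For the literal same-scale statement I would reorganize everything around the compact product system $\Gamma_\varepsilon=\{(x,y)\in M\times M:d(g^nx,g^ny)\le\varepsilon\text{ for all }n\in\ZZ\}$ under the diagonal map, whose first-coordinate projection $\pi$ is a factor map onto $(M,g)$ with fibers the closed Bowen balls; $\varepsilon$-almost entropy expansiveness says the fiberwise topological entropy vanishes a.e.\ for every invariant measure on $M$, and what must be shown — that it then vanishes \emph{uniformly} in $x$ — is precisely the lemma in disguise. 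The Ledrappier--Walters relative variational principle provides a convenient framework, but the genuine step is still the Misiurewicz construction above, now carried out inside the compact invariant sets $\pi^{-1}\bigl(\overline{\Orb(x_0)}\bigr)$, together with the routine comparison between the closed Bowen balls and the open ones of the definitions.
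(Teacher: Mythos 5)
The paper does not prove this lemma at all; it is imported verbatim from \cite[Proposition 2.4]{LVY}, so there is no in-paper argument to compare against, and your proof must stand on its own. The easy direction is fine. The converse, however, has a genuine gap at the assertion $\mu(V)=1$. Each $\mu_n$ is carried on $V=\bigcup_{n\in\ZZ}B_\infty(g^n x_0,\varepsilon)$ (indeed on the compact set $\bigcup_{k=0}^{n-1}g^k(S_n)$), but $V$ is not closed, and weak-$*$ convergence gives $\mu(F)\ge\limsup_n\mu_n(F)$ only for closed $F$. Nothing in the Misiurewicz construction prevents the limit from concentrating on $\overline V\setminus V$---for instance on a compact invariant subset of the limit set of $\Orb(x_0)$ that meets no $B_\infty(g^n x_0,\varepsilon)$. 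The difficulty is not the Borel class of $V$ (your ``$G_\delta$ with some care'' remark), it is that mass genuinely escapes under weak-$*$ limits, and I do not see how your framework rules this out. Without $\mu(V)=1$ the triangle-inequality step $B_\infty(g^n x_0,\varepsilon)\subset B_\infty(z,2\varepsilon)$ has no $\nu$-typical $z$ to which to apply itself. Moreover, even granting the concentration, your output is failure of $2\varepsilon$-almost expansiveness, i.e.\ only the implication ``$2\varepsilon$-almost expansive $\Rightarrow$ $\varepsilon$-entropy expansive''; as you note this suffices for the paper's applications, but it is not the asserted equivalence at one scale.

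Your pair-system reformulation via Ledrappier--Walters is indeed the natural setting, but it conceals the same obstruction in different clothing. By the relative variational principle, what must be shown positive is the conditional entropy $h_\nu(g\times g)-h_{\pi_*\nu}(g)$, since this quantity is bounded above by $\int h_{top}(g,B_\infty(x,\varepsilon))\,d(\pi_*\nu)$, and its positivity is what forces the integral (hence a positive-measure set of fat Bowen balls) to be positive. Seeding Misiurewicz with separated sets in the fiber over $x_0$ inside $\Gamma_\varepsilon$ gives $h_\nu(g\times g)\ge c_0$, but says nothing about $h_{\pi_*\nu}(g)$; the projection $\pi_*\nu$ is an empirical-measure limit along $\Orb(x_0)$ whose entropy may well exceed $c_0$, rendering the bound vacuous. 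Producing a measure $\nu$ with positive \emph{relative} entropy over the base---not merely positive total entropy---is precisely the missing step, and neither of your sketches supplies it.
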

Note that every compact flow segment has zero topological entropy since their length is bounded under the iteration of $\phi_t$. Therefore if Theorem~\ref{m.almostexpansive} holds, then for every ergodic, invariant measure $\mu$ and for $\mu$ almost every $x$, the topological entropy of $B_\infty(x,\delta)$ must be zero. Combining with Lemma~\ref{l.almostexpansive}, we see that $\phi_t$ must be robustly $h$-expansive.

The rest of this subsection is dedicated to the proof of Theorem~\ref{m.almostexpansive}. To this end, we assume that $\Lambda$ is sectional hyperbolic for the $C^1$ flow $\phi_t$ and $f=\phi_1$ is the time-one map. We also assume, as in Theorem~\ref{m.almostexpansive}, that all the singularities in $\Lambda$ are hyperbolic. We take $U$ a small neighborhood of $\Lambda$, such that the maximal invariant set $\tl$ for $\phi_t|_U$ is also sectional hyperbolic: on $T_{\tl} M$ there is a dominated splitting $E^s\oplus F^{cu}$ such that $\Phi_t$ on $E^s$ is uniformly contracting.  Moreover, there is $0<\lambda_0<1$ such that for any subspace $V_x\subset F^{cu}_x$ with dimension at least $2$, we have\footnote{Here we may take $f = \phi_{N_0}$ for some $N_0>0$ large enough if necessary. Equivalently we can replace $X$ by $cX$ for some $c>0$ large enough. Either way, it will affect the definition of $B_\infty(x,\delta)$ since it is defined using the time-one map. However, by continuity we have $B_\infty(x, \delta', f)\subset B_\infty(x,\delta, f^{N_0})$  for $\delta' = \delta'(\delta, N_0)$ small enough. Therefore if Theorem~\ref{m.almostexpansive} is proven for $f = \phi_{N_0}$ then it also holds for $\phi_1$ by taking a smaller $\delta$.}
$$
\det (Df|_{V_x}) > \frac{1}{\lambda_0}.
$$ 
Enlarging $\lambda_0$ if necessary, we can assume that the above inequality holds for any two-dimensional subspace $V_x$ in the cone $C_a(F^{cu})$, for $a$ small enough. Since all the singularities in $\Lambda$ are hyperbolic and thus isolated, we can take $U$ small enough so that 
\begin{equation}\label{e.singularities}
\Sing(\phi_t|_U)= \Sing(\phi_t|_{\Lambda}).
\end{equation}

Note that  for a $C^1$ flow $\tilde\phi_t$ close to $\phi_t$, the maximal invariant set of $\tilde\phi_t$ in $U$ is still sectional hyperbolic, with all singularities hyperbolic. Below we will only show the $\delta$-entropy expansiveness for the flow $\phi_t$. For the robustness, one can easily check that the choice of $\delta$ depends only on the fake foliation which is continuous with respect to the system (see \cite{LVY}), the flow speed, the hyperbolicity of the singularities and the volume expanding rate $\lambda_0$ (see Propositions~\ref{p.measure.away} and~\ref{p.measure.near} below), thus can be made uniform for nearby flows.

\subsubsection{Structure of the proof}\label{sss1}
Before getting into details, we  briefly explain the structure of our proof of Theorem~\ref{m.almostexpansive}. First we  introduce the fake foliations for maps with dominated splitting. As we will see later, these fake foliations are $f$-invariant but generally not $\phi_t$ invariant for non-integers $t$. In particular, the $cu$ fake leaves are not saturated by the flow orbits. However, there is a weak form of saturation for points in the infinite Bowen ball, as observed in Corollary~\ref{c.saturation}.

To prove Theorem~\ref{m.almostexpansive}, the key observation is that the infinite Bowen ball $B_\infty(x,\delta)$ must be contained in the fake $cu$-leaf of $x$ (Lemma~\ref{l.bowenball}). Moreover, for $\mu$ almost every $x$, the distance between $x$ and $y$ are exponentially expanding under the flow. Thus the $(\infty,\delta)$-Bowen ball of $x$ must be contained in the orbit of $x$.

The key result here  is Lemma~\ref{l.expansion.near}, which builds up the expanding property on the normal direction, for orbits that pass through the $\varepsilon$ neighborhood of a singularity (this lemma also gives the choice of $\delta$). Then we consider the following two families of measures:
\begin{itemize}
\item  measures whose supports are $\varepsilon$ away from any singularity. For typical points of such measures, the orbits stay away from singularities, thus the flow speed is bounded from below. In this case, the sectional hyperbolicity guarantees that there is enough expansion along fake $\cF^{cu}$ foliation (Lemma~\ref{l.expansiong.away}).
\item measures whose supports intersect the $\varepsilon$ neighborhood of some singularity. The orbit of typical points will pass through the $\varepsilon$ neighborhood of singularities infinitely many times. We use Lemma~\ref{l.expansion.near} to establish the expansion behavior for each time a point gets close to a singularity. This method allows us to bypass linearization near singularities.  
\end{itemize}

From now on, to simplify notation, we will write $x_t = \phi_t(x)$. In particular, 
$$
x_n = f^n(x) = \phi_n(x).
$$

%%%%%%%%%%%%%%%%%%%%%%%%%%%%%%%%%%%%%
%%%%%%%%%%%%%%%%%%%%%%%%%%%%%%%%%%%%%

\subsubsection{Fake foliations and infinite Bowen balls}

The following lemma is borrowed from \cite[Lemma 3.3]{LVY} (see also \cite[Proposition 3.1]{BW}), which shows that one can always construct local fake foliations. Moreover, these fake foliations have local product structure, which is preserved by the dynamics.

\begin{lemma}\label{l.fakefoliation}
Let $K$ be a compact invariant set of $f$. Suppose $K$ admits a dominated splitting $T_K M = E^1 \oplus E^2 \oplus E^3$. Then for every $a>0$ there are $\rho > r_0 > 0$, such that the neighborhood $B_\rho(x)$ of every $x \in K $ admits foliations $\cF^1_x, \cF^2_x, \cF^3_x, \cF^{12}_x$ and $\cF^{23}_x$, such that for every $y \in B_{r_0}(x)$ and $* \in \{1, 2, 3, 12, 23\}:$
\begin{enumerate}[label=(\roman*)]
\item $\cF^*_x(y)$ is $C^1$ and tangent to the respective cone $C_a(E_x^*)$ (we write $E^{12} = E^1\oplus E^2$, similarly for $E^{23}$).
\item Forward and backward invariance: $f(\cF^*_x(y, r_0)) \subset \cF^*_{f(x)}(f(y))$, and\\ $f^{-1}(\cF^*_x(y, r_0)) \subset \cF^*_{f^{-1}(x)}(f^{-1}(y))$.
\item $\cF^1_x$ and $\cF^2_x$ sub-foliate $\cF^{12}_x$; $\cF^2_x$ and $\cF^3_x$ sub-foliate $\cF^{23}_x$.
\end{enumerate}
\end{lemma}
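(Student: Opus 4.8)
The plan is to follow the construction of fake foliations due to Burns and Wilkinson, adapted to the setting of a three-way dominated splitting $E^1\oplus E^2\oplus E^3$ on the compact $f$-invariant set $K$. First I would lift everything to the tangent bundle: for each $x\in K$ use the exponential map $\exp_x$ to identify a ball $B_\rho(x)\subset M$ with a ball in $T_xM$, and define the lifted dynamics $\tilde f_x = \exp_{f(x)}^{-1}\circ f\circ\exp_x$, which is a diffeomorphism from a neighborhood of $0\in T_xM$ into $T_{f(x)}M$. By compactness of $K$ and $C^1$-continuity of $f$, the maps $\tilde f_x$ are uniformly $C^1$-close to the linear maps $D f_x$, and the latter preserve the splitting $T_xM = E^1_x\oplus E^2_x\oplus E^3_x$ with domination. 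The key point is that each $D f_x$, being a dominated linear cocycle, acts as a partially hyperbolic-type linear map on each of the five sub-bundles $E^1, E^2, E^3, E^{12}=E^1\oplus E^2, E^{23}=E^2\oplus E^3$, in the sense that $E^*$ is "dominated from one side" by its complement; this is exactly the hypothesis needed to run the Hadamard--Perron graph transform.

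Next I would apply the standard graph transform / normally hyperbolic foliation machinery to each of the five families of sub-bundles simultaneously. Concretely, for $*\in\{1,2,3,12,23\}$, one seeks, for each small $y$ in $T_xM$, a $C^1$ submanifold through $y$ that is tangent to the cone $C_a(E^*_x)$ and is sent by $\tilde f_x$ into the corresponding cone over $f(x)$ (and similarly for $\tilde f_x^{-1}$). The existence, uniqueness, $C^1$-smoothness and tangency come from a fixed-point argument in the space of cone-tangent graphs, using that the splitting is dominated so that the graph transform is a uniform contraction on the relevant function space; the domination constant governs how small $a$, $\rho$ and $r_0$ must be chosen. Forward and backward invariance, i.e. item (ii), is built into the construction: $f(\cF^*_x(y,r_0))\subset\cF^*_{f(x)}(f(y))$ holds by the defining property of the graph transform, provided $r_0$ is small enough that the image stays inside the chart of radius $\rho$ at $f(x)$ (this is where one needs $\rho>r_0$, with the ratio controlled by $\|f\|_{C^1}$). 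The sub-foliation statements in item (iii) follow because $E^1\subset E^{12}$ and $E^3\subset E^{23}$ with domination within those sub-bundles, so the $\cF^1$- and $\cF^2$-leaves, restricted to an $\cF^{12}$-leaf, again arise as graph transforms and hence coherently partition it; the same argument gives $\cF^2,\cF^3$ sub-foliating $\cF^{23}$.

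The main obstacle, and the only place requiring genuine care, is keeping the five families of leaves and their charts globally coherent over all of $K$ at once, with a single pair of radii $\rho>r_0>0$ working uniformly. The issue is that the charts $\exp_x$ distort the linear picture by an amount controlled by $\rho$ and the geometry of $M$, and the cone fields $C_a(E^*_x)$ must be invariant not just under the linear parts but under the genuine nonlinear maps $\tilde f_x$; this forces $a$ and $\rho$ to be taken small in a way that depends on the domination constant $L$ from the definition of dominated splitting and on $\|f\|_{C^2}$-type bounds coming from the manifold — but since we only claim $C^1$ leaves we can get away with $C^1$ bounds plus a Lipschitz estimate on $Df$ along orbits, exactly as in~\cite{BW}. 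Once the uniform choice of $a,\rho,r_0$ is fixed, items (i)--(iii) are read off from the graph-transform construction. I would present the argument by reducing to~\cite[Proposition 3.1]{BW} (or~\cite[Lemma 3.3]{LVY}), indicating the three-bundle bookkeeping explicitly, rather than reproving the graph transform from scratch.
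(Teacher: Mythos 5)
Your proposal correctly outlines the Burns--Wilkinson graph-transform construction and, crucially, ends by reducing the statement to \cite[Proposition 3.1]{BW} and \cite[Lemma 3.3]{LVY}. This matches the paper exactly: the paper gives no independent proof of this lemma, but simply states that it is "borrowed from \cite[Lemma 3.3]{LVY} (see also \cite[Proposition 3.1]{BW})", which is the same approach.
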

Now we take $K=\tl$, and consider the fake foliations $\cF^s$ and $\cF^{cu}$ given by the dominated splitting $E^s\oplus F^{cu}$. Note that the forward and backward invariance above may not hold for the flow, i.e., the fake foliation
may not be preserved by $\phi_t$ when $t \notin \ZZ$. Moreover, the flow orbits may not even locally
saturate $\cF^{cu}$ leaves. This is because the fake foliations depend on the extension of the
dynamics in the tangent bundle (see the proof of \cite[Proposition 3.1]{BW}), which is in general not
preserved by the flow.

For the convenience of our readers, we provide a list of parameters that will be used in this section:

\begin{enumerate}
\item $\varepsilon_0$: the scale within which the exponential map is well-defined.
\item $\lambda_0<1$: $\frac{1}{\lambda_0}$ is the volume expanding rate on $F^{cu}$.
\item $0<r_0<\rho$: the fake foliations are defined and invariant within $r_0$ neighborhood of every $x\in\tl$, which foliate $\rho$ neighborhoods. Also we have $B_\infty(x,r_0) \subset \cF^{cu}_x(x)$ by Corollary~\ref{c.saturation} below. 
\item Let $D_0$ be the maximal flow speed and $r_1=\frac{r_0}{2D_0}$. 
\item Fixed some $\delta_1>0$ small, we get $L>1$ whose precise definition is in the proof of Proposition~\ref{p.measure.near}, then $\varepsilon$, $\delta>0$ is given by Lemma~\ref{l.expansion.near}. Note that both $\varepsilon$ and $\delta$ are chosen to be much smaller than $r_0$, the size of the fake foliations.
\end{enumerate}

The next lemma is taken from the proof of~\cite[Theorem 3.1]{LVY} and gives an important observation on infinite Bowen balls. The proof easily follows from the local product structure of the fake foliations, and the uniform contraction on $\cF^s$. Unlike in~\cite{LVY}, here we do not need the Pliss Lemma. 

\begin{lemma}\label{l.bowenball} Let $r_0>0$ be given as in Lemma~\ref{l.fakefoliation}.
Then for every $x \in \tl$, $B_\infty(x,r_0) \subset \cF^{cu}_x(x)$.
\end{lemma}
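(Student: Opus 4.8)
\textbf{Proof proposal for Lemma~\ref{l.bowenball}.}

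The plan is to exploit the local product structure of the fake foliations together with the uniform contraction along $\cF^s$, following the argument in the proof of~\cite[Theorem 3.1]{LVY} but, as remarked above, without the Pliss Lemma since we are looking at the two-sided Bowen ball rather than at a one-sided orbit segment. Fix $x\in\tl$ and let $y\in B_\infty(x,r_0)$; by shrinking $r_0$ if necessary we may assume $r_0$ is small enough that the local product structure holds and that $f^n(y)\in B_{r_0}(f^n(x))$ for all $n\in\ZZ$. The local product structure furnished by Lemma~\ref{l.fakefoliation}(iii) gives, for each $n$, a unique point $z_n$ in the intersection $\cF^{cu}_{x_n}(x_n)\cap\cF^s_{x_n}(y_n)$ lying inside $B_{r_0}(x_n)$, where $x_n=f^n(x)$ and $y_n=f^n(y)$. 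What I want to show is that $z_0=y$, i.e. that $y$ already lies on $\cF^{cu}_x(x)$, equivalently that the $\cF^s$-coordinate of $y$ relative to $x$ is trivial.

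The key step is to transport this along the dynamics. By the forward invariance in Lemma~\ref{l.fakefoliation}(ii), $f$ maps $\cF^{cu}_{x_n}(x_n,r_0)$ into $\cF^{cu}_{x_{n+1}}(x_{n+1})$ and $\cF^s_{x_n}(y_n,r_0)$ into $\cF^s_{x_{n+1}}(y_{n+1})$, so by uniqueness of the intersection point we get $f(z_n)=z_{n+1}$ as long as everything stays within the domains where the foliations are defined and invariant — which is guaranteed by $y\in B_\infty(x,r_0)$. Thus $\{z_n\}_{n\in\ZZ}$ is a genuine $f$-orbit with $z_n\in\cF^s_{x_n}(y_n)$ and $d(z_n,x_n)<r_0$ for every $n$. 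Now I use the uniform contraction of $\Phi_t$ on $E^s$, which by the domination and the cone estimates translates into uniform contraction of $f$ along the leaves of $\cF^s$ (the leaves are tangent to the $E^s$-cone): there are $C>0$ and $\tau<1$ with $d_{\cF^s}(f^n(p),f^n(q))\le C\tau^n d_{\cF^s}(p,q)$ for $p,q$ on a common local $\cF^s$-leaf, for all $n\ge 0$, and correspondingly exponential expansion of $\cF^s$-distances under $f^{-1}$. Since $y$ and $z_0$ lie on the common leaf $\cF^s_x(y)=\cF^s_x(z_0)$, applying $f^{-n}$ would expand $d_{\cF^s}(y,z_0)$ exponentially unless $d_{\cF^s}(y,z_0)=0$; but both $y_{-n}$ and $z_{-n}$ stay within $B_{r_0}(x_{-n})$ for all $n\ge 0$, so the $\cF^s$-distance between them stays bounded. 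Hence $d_{\cF^s}(y,z_0)=0$, i.e. $y=z_0\in\cF^{cu}_x(x)$, as claimed.

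The main obstacle, and the point requiring care, is controlling the domains: the invariance statements in Lemma~\ref{l.fakefoliation}(ii) only hold for leaves of radius $r_0$ inside $B_\rho$, so I must verify that the points $z_n$ (and $y_n$) never leave the region where the product structure and the invariance are valid, and that the $\cF^s$-distances and $\cF^{cu}$-distances involved never exceed the scale $r_0$ at which the estimates are stated. This is exactly where the hypothesis $y\in B_\infty(x,r_0)$ is used, together with the fact that the product-structure neighborhood can be taken of a definite size depending only on the dominated splitting. A secondary technical point is the passage from contraction of $\Phi_t$ on the bundle $E^s$ to metric contraction of $f$ along the (merely $C^1$, fake) leaves of $\cF^s$; this follows from the leaves being tangent to the narrow cone $C_a(E^s)$ and a standard mean-value/graph-transform estimate, and can be cited from~\cite{BW,LVY}. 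Once these domain and metric bookkeeping issues are settled, the argument above closes.
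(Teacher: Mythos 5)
Your proof is correct and follows essentially the same route as the paper's. The only cosmetic difference is the choice of anchor in the local product structure: you compare $y$ against $z_0 = \cF^{cu}_x(x)\cap\cF^s_x(y)$ and rule out $y\neq z_0$ by backward expansion along $\cF^s$, whereas the paper compares $x$ against $y'=\cF^s_x(x)\cap\cF^{cu}_x(y)$ and rules out $y'\neq x$ by forward contraction along $\cF^s$ applied to the pulled-back leaves; these are dual formulations of the same contraction argument, and both reduce $B_\infty(x,r_0)\subset\cF^{cu}_x(x)$ to the uniform hyperbolicity of $E^s$ plus the $f$-invariance of the fake foliations.

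One small bookkeeping point worth tightening (the paper glosses over it too): from $y_{-n}\in B_{r_0}(x_{-n})$ the local product structure gives $z_{-n}$ within distance $Cr_0$ of $x_{-n}$, where $C$ depends only on the angle between the $E^s$ and $F^{cu}$ cones, not literally within $B_{r_0}(x_{-n})$ as you wrote; this is harmless because one can take the product-structure constants so that $Cr_0\le\rho$ (or simply shrink $r_0$), but it is worth stating, since it is exactly the domain control you flag as the delicate step. Your observation that $\{z_n\}$ is a genuine $f$-orbit, obtained from the uniqueness of the transverse intersection together with items (ii) and (iii) of Lemma~\ref{l.fakefoliation}, is a clean way to organize that bookkeeping.
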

\begin{proof}
Let $y\in B_{\infty}(x,r_0)$ and write $y'$ the unique point of intersection of $\cF^s_x(x)$ with $\cF^{cu}_x(y)$. We claim that $x=y'$, which implies that $y\in \cF^{cu}_x(x)$. 

Since $f^{-n}(y)\in B_{r_0}(f^{-n}(x))$ for all $n\ge 0$, by the invariance of the fake foliation, we have $f^{-n}(y')\in \cF^s_{f^{-n}(x)}(f^{-n}(x), r_0)$ for all $n\ge 0$, which means that
$$
y' \in \cap_{n\ge 0} f^n\left(\cF^s_{f^{-n}(x)}(f^{-n}(x), r_0)\right)
$$

On the other hand, since $E^s$ is uniformly contracted by $Df$, we can take $a$ small enough such that vectors in $C_a(E^s)$ are contracted by some $\lambda'<1$ under the iteration of $Df$.
This shows that 
$$
y'\in \cF^{s}_{x}(x, r_0(\lambda')^n )
$$
for all $n\ge 0$. As a result  we must have $y' = x$, as claimed.
\end{proof}
As an immediately corollary of Lemma~\ref{l.bowenball}, we obtain a type of weak saturated
property for $cu$ fake leaf $\cF^{cu}_x(x)$:
\begin{corollary}\label{c.saturation}
For $r_1 = r_0/(2D_0)$ and for every $x\in M$, $y \in B_\infty(x, r_0/2)$, we have $y_t \in \cF^{cu}_x(x)$
for $|t| \le r_1$.
\end{corollary}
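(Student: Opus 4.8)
The plan is to deduce this directly from Lemma~\ref{l.bowenball}, with no new ingredient beyond the commutation $f^n\circ\phi_t = \phi_t\circ f^n$ and a triangle inequality. Fix $y\in B_\infty(x,r_0/2)$ and $t$ with $|t|\le r_1 = r_0/(2D_0)$, and write $y_t = \phi_t(y)$. I claim that $y_t\in B_\infty(x,r_0)$; once this is established, Lemma~\ref{l.bowenball} applied at scale $r_0$ gives immediately $y_t\in\cF^{cu}_x(x)$, which is the desired conclusion.

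To prove the claim, fix $n\in\ZZ$. Since the time-one map commutes with the flow, $f^n(y_t) = f^n(\phi_t(y)) = \phi_t(f^n(y))$, so $f^n(y)$ and $f^n(y_t)$ lie on a single flow orbit segment of length at most $D_0|t|$, where $D_0$ is the maximal flow speed; hence $d(f^n(y), f^n(y_t))\le D_0|t|\le D_0 r_1 = r_0/2$. On the other hand $d(f^n(x),f^n(y)) < r_0/2$ because $y\in B_\infty(x,r_0/2)$. By the triangle inequality, $d(f^n(x),f^n(y_t)) < r_0$ for every $n\in\ZZ$, that is, $y_t\in B_\infty(x,r_0)$, as claimed.

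There is essentially no obstacle here; the only point needing attention is the factor $2$ in the definition $r_1 = r_0/(2D_0)$, which is exactly what lets the flow displacement $D_0|t|\le r_0/2$ be absorbed into the gap between the scale $r_0/2$ at which $y$ two-sided-shadows $x$ and the scale $r_0$ at which Lemma~\ref{l.bowenball} provides saturation of the fake $cu$-leaf. Alternatively, instead of invoking Lemma~\ref{l.bowenball} as a black box one can repeat its proof verbatim with $y_t$ in place of $y$: the estimate above shows $f^{-n}(y_t)\in B_{r_0}(f^{-n}(x))$ for all $n\ge 0$, so the intersection point $y'$ of $\cF^s_x(x)$ with $\cF^{cu}_x(y_t)$ is forced to equal $x$ by the uniform contraction along the fake stable foliation $\cF^s$, whence $y_t\in\cF^{cu}_x(x)$.
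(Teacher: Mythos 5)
Your argument is exactly the paper's: commute $f^n$ with $\phi_t$ to bound $d(f^n(y),f^n(y_t))$ by $D_0|t|\le r_0/2$, add this to $d(f^n(x),f^n(y))<r_0/2$ via the triangle inequality to conclude $y_t\in B_\infty(x,r_0)$, and then invoke Lemma~\ref{l.bowenball}. Correct and identical in substance.
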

\begin{proof}
Denote $D_0= \max_{x\in M} \|X(x)\|$ and $r_1 =\frac{r_0}{2D_0}$. Then for any $|t| \le r_1$ and every $n\in\ZZ$, the segment of flow orbit between $f^n(y_t)$ and $f^n(y)$ has length bounded by $r_1D_0 \le r_0/2$.
Hence, for every $n\in\ZZ$,
$$
d(f^n(x), f^n(y_t)) \le d(f^n(x), f^n(y)) + d(f^n(y), f^n(y_t)) \le r_0,
$$
which shows that $y_t \in B_\infty(x, r_0) \subset \cF_x^{cu}(x)$ for $|t| \le r_1$.

\end{proof}

\subsubsection{Expanding property on $cu$-leaves}
In this section we will present two lemmas (\ref{l.expansiong.away} and~\ref{l.expansion.near}) which establish the expanding property between different flows lines for points in $B_\infty(x,\delta)$, for some $\delta \ll r_0$. The proof of these lemmas will be postponed to the end of this section.
First we introduce the following definition:
\begin{definition}
For any $x\in U\setminus\Sing(X)$ and $0<\delta<\varepsilon_0$, write $B^\perp_{\delta}(x) = \exp_x(N_x\cap B_\delta(0))$ for the image of local $N_x$ under the exponential map. Denote by $d_x^*$ the distance between $x$ and $y$ in the submanifold $B^\perp_{\delta}(x)\cap\cF^{cu}_x(x)$. Also write $P_x$ for the projection along the flow:
$$
P_x:B_\delta(x) \to B^\perp_{\delta}(x),
$$
which is well-defined in a small neighborhood of $x$ (though the size of the neighborhood depends on $x$), and sends every point $y$ along the flow direction to the normal plane $B^\perp_\delta(x)$. For points $y$ in  this neighborhood, write $t_x(y)$ the time for which 
$$
y_{t_x(y)} = P_x(y).
$$
\end{definition}
Note that $t_x$ becomes unbounded as $x$ gets closer to some singularity. On the other hand, for every fixed $\varepsilon>0$, we can take $\delta$ small enough (depending on $\varepsilon$), such that for every $x\notin B_\varepsilon(\Sing(X))$, $t_x$ is uniformly small inside $B_\delta(x)$. 
In view of Corollary~\ref{c.saturation}, we take $\delta$ small such that $|t_x(y)|<r_1$ for $y\in B_\delta(x)$ and $x\notin B_\varepsilon(\Sing(X))$. As a result, for $y\in B_\infty(x,\delta)$, $P_x(y)$ is still contained in $\cF_x^{cu}(x).$
The proof of the next lemma is straightforward and thus omitted.
\begin{lemma}\label{l.comparedistance}
For any $\varepsilon>0$ and $L_0>1$, there is $\delta>0$ such that for every $x\in\tl\setminus B_\varepsilon(\Sing(X))$ and $y\in B^\perp_{\delta}(x)\cap\cF^{cu}_x(x)$, we have
$$
d(x,y)\le d^*_x(x,y) \le L_0d(x,y).
$$
\end{lemma}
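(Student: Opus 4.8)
The plan is to treat the two inequalities separately. The left inequality $d(x,y)\le d^*_x(x,y)$ is soft and holds for any $C^1$ submanifold: since $d^*_x$ is the intrinsic (length) distance of $S_x:=B^\perp_\delta(x)\cap\cF^{cu}_x(x)$, every $C^1$ path lying inside $S_x$ has the same length measured in $S_x$ and in $M$, so its $M$-length is at least $d(x,y)$; taking the infimum over such paths joining $x$ to $y$ — which exist because, for $\delta$ small enough, $S_x$ is a connected $C^1$ disk through $x$ — yields the bound. This step uses only that $S_x$ is an honest $C^1$ submanifold, which follows from the transversality recorded in the next step.

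For the right inequality I would realize $S_x$ near $x$ as a $C^1$ graph of uniformly small slope over its tangent plane at $x$. The underlying linear algebra is: for a regular point $x$ one has $X(x)\in F^{cu}_x$ (sectional hyperbolicity, cf.\ the Remark following the lemma relating $\phi_t$- and $f$-dominated splittings), while $X(x)\perp N_x$; since $X(x)\ne 0$ this forces $F^{cu}_x+N_x=T_xM$, so $\cF^{cu}_x(x)$ and $B^\perp_\delta(x)$ meet transversally at $x$, with $T_xS_x=T_x\cF^{cu}_x(x)\cap N_x=:W_x$ (a plane close to $F^{cu}_x\cap N_x$). Working in the chart $\exp_x^{-1}$, for every $\eta>0$ there should be $\delta=\delta(\eta,\varepsilon)>0$, \emph{independent of} $x$, such that $\exp_x^{-1}(S_x\cap B_\delta(x))$ is the graph of a $C^1$ map $h_x\colon B_\delta(0)\cap W_x\to W_x^\perp$ with $h_x(0)=0$, $\|Dh_x\|\le\eta$, and (after shrinking $\delta$) also $\|D\exp_x\|\le 1+\eta$ near $0$. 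Then for $y\in S_x$ with $\exp_x^{-1}(y)=v+h_x(v)$, $v\in W_x$, the curve $\gamma(t)=\exp_x(tv+h_x(tv))$, $t\in[0,1]$, lies in $S_x$, joins $x$ to $y$, and has length $\le(1+\eta)^2\|v\|$, while $d(x,y)\ge\frac{1-\eta}{1+\eta}\|v\|$; hence $d^*_x(x,y)\le\length(\gamma)\le\frac{(1+\eta)^3}{1-\eta}\,d(x,y)$. It then suffices to fix $\eta$ so small that $\frac{(1+\eta)^3}{1-\eta}\le L_0$ and take the associated $\delta$.

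The only delicate point — and presumably why the authors call the argument straightforward rather than trivial — is the \emph{uniformity} of the graph data $(\delta,h_x)$ over $x\in\tl\setminus B_\varepsilon(\Sing X)$. This is exactly where staying $\varepsilon$ away from the singularities enters: on the compact set $\tl\setminus B_\varepsilon(\Sing X)$ one has $\|X(x)\|\ge c(\varepsilon)>0$, so $x\mapsto N_x$ is continuous there, and together with the uniform $C^1$ cone-tangency of the fake leaves from Lemma~\ref{l.fakefoliation}(i) the transversality angle between $N_x$ and $T_x\cF^{cu}_x(x)$ is bounded below uniformly; a compactness argument then produces a single $\delta$ making all the estimates above hold simultaneously. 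Everything else is the routine first-order comparison of the intrinsic and extrinsic metrics on a $C^1$ submanifold, so I do not expect a genuine obstacle.
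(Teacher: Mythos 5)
The paper omits this proof (``straightforward and thus omitted''), so there is nothing to compare against; your argument fills the gap correctly. The split into the soft left inequality (intrinsic $\ge$ extrinsic distance on a $C^1$ submanifold) and the quantitative right inequality via a graph representation with controlled slope is exactly the standard route, and the linear-algebra point that $X(x)\in F^{cu}_x$ while $X(x)\perp N_x$ forces $F^{cu}_x+N_x=T_xM$ (so $\cF^{cu}_x(x)$ meets $B^\perp_\delta(x)$ transversally) is the correct source of transversality, with the uniform lower bound on $\|X\|$ away from $B_\varepsilon(\Sing X)$ giving uniformity of the transversality angle.

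One point you should make explicit: the bound $\|Dh_x\|\le\eta$ uniformly over $z\in S_x\cap B_\delta(x)$ and over $x$ does not follow from transversality plus $C^1$-ness of the leaves alone, since the leaves of a $C^1$ fake foliation need not have a uniform $C^1$ modulus \emph{a priori}. What actually gives you the small slope is the cone parameter $a$ in Lemma~\ref{l.fakefoliation}: the tangent planes $T_z\cF^{cu}_x(x)$ all lie in $C_a(F^{cu}_x)$, hence are within angle $O(a)$ of each other, and since $\cF^{cu}_x(x)$ meets $N_x$ at a uniformly bounded angle, the resulting graph slope over $W_x$ is $O(a)$. Since Lemma~\ref{l.fakefoliation} lets one take $a$ as small as one likes (at the cost of shrinking $r_0$), you simply choose $a$ small enough, in terms of $L_0$, before choosing $\delta$. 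With that one sentence added the proof is complete.
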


The following lemma considers points {\em{whose orbit stay $\varepsilon$-away from all singularities}}:
\begin{lemma}\label{l.expansiong.away}
For $\varepsilon > 0$ and $1 < b_0 < \frac{1}{\lambda_0}$, there is $\delta' > 0$ such that for any $x$
satisfying $(x_t)_{t\in[0,1]} \subset \tl \setminus B_\varepsilon(\Sing(X))$, and for any $y \in B_\infty(x, \delta')$:
$$
d^*_{x_1}(x_1, P_{x_1}(y_1)) > b_0\frac{\|X(x)\|}{\|X(x_1)\|}d^*_x(x, P_x(y)).
$$
\end{lemma}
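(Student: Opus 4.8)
The plan is to transfer the sectional volume expansion on $F^{cu}$, which controls determinants over two-dimensional subspaces, into an expansion estimate for the single direction inside $\cF^{cu}_x(x)$ that connects $x$ to $P_x(y)$. The key point is that the fake leaf $\cF^{cu}_x(x)$ has dimension $\dim F^{cu} \ge 2$, and it contains both the flow line through $x$ (by Corollary~\ref{c.saturation}, since $y_t\in\cF^{cu}_x(x)$ for small $t$) and the point $P_x(y)$. So inside the leaf we have (at least) a $2$-plane spanned by the flow direction $X(x)$ and the ``transverse'' direction pointing from $x$ toward $P_x(y)$.

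First I would set up the infinitesimal version. Fix $x$ with $(x_t)_{t\in[0,1]}\subset\tl\setminus B_\varepsilon(\Sing(X))$; since the orbit segment stays $\varepsilon$-away from singularities, the flow speed $\|X(x_t)\|$ is bounded below by some $c_\varepsilon>0$ and above by $D_0$, and the unit normal direction $u_x = $ (unit tangent to $B^\perp_\delta(x)\cap\cF^{cu}_x(x)$ at $x$) varies continuously. Let $V_x = \operatorname{span}\{X(x), u_x\}\subset F^{cu}_x$ (after shrinking the cone constant $a$ so that the tangent space of the fake $cu$-leaf lies in $C_a(F^{cu})$, this is a genuine $2$-plane in the cone, so the determinant estimate $\det(Df|_{V_x})>1/\lambda_0$ applies, and in fact $b_0 < 1/\lambda_0$ gives us room). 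Now $D\phi_1$ maps $X(x)$ to $X(x_1)$ exactly, and it maps $u_x$ to some vector $w$; decomposing $w$ along $X(x_1)$ and along $u_{x_1}$, the component along $u_{x_1}$ has norm $\|D\phi_1|_{V_x}\| \cdot |{\det(D\phi_1|_{V_x})}| / (\|X(x_1)\| \cdot \|w\|)$-type expression — more cleanly: the area of the parallelogram $\phi_1(X(x)\wedge u_x)$ equals $|\det(D\phi_1|_{V_x})|\cdot\|X(x)\wedge u_x\| = |\det(D\phi_1|_{V_x})|\cdot\|X(x)\|$ (since $u_x\perp X(x)$), while this same area equals $\|X(x_1)\|\cdot(\text{height of } \phi_1(u_x) \text{ over } X(x_1))$. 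The height is exactly $\|\psi_1(u_x)\|$ (the linear Poincaré flow!), so
$$
\|\psi_1(u_x)\| = \frac{|\det(D\phi_1|_{V_x})|\,\|X(x)\|}{\|X(x_1)\|} > \frac{1}{\lambda_0}\cdot\frac{\|X(x)\|}{\|X(x_1)\|} > b_0\,\frac{\|X(x)\|}{\|X(x_1)\|},
$$
which is precisely the desired inequality at the infinitesimal level, with $d^*$ replaced by its derivative along $u$.

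Next I would integrate this to get the statement for the actual distances $d^*_x(x,P_x(y))$. This requires two things: (a) the curve $\gamma$ inside $\cF^{cu}_x(x)\cap B^\perp_\delta(x)$ joining $x$ to $P_x(y)$ has its image $\phi_1(\gamma)$ comparable (via the projection $P_{x_1}$) to a curve inside $\cF^{cu}_{x_1}(x_1)\cap B^\perp_\delta(x_1)$ joining $x_1$ to $P_{x_1}(y_1)$ — here invariance of the fake foliation (Lemma~\ref{l.fakefoliation}(ii)), the weak saturation (Corollary~\ref{c.saturation}), and the distortion bound Lemma~\ref{l.comparedistance} do the work; and (b) a uniform continuity/compactness argument: the infinitesimal estimate holds with a definite gap ($1/\lambda_0$ vs.\ $b_0$), the quantities $\|X\|$, $u_x$, $D\phi_1$, and the holonomy $P$ all vary continuously on the compact set $\tl\setminus B_\varepsilon(\Sing(X))$, so for $\delta'$ small enough the finite-distance version inherits the infinitesimal inequality, at the cost of the slack between $1/\lambda_0$ and $b_0$ absorbing all the error terms. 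I would phrase this as: choose $\delta'$ so small that for $y\in B_\infty(x,\delta')$ the segment of the $cu$-leaf from $x$ to $P_x(y)$ stays in the region where $\|\psi_1\|$ restricted to its tangent directions exceeds $b_0\|X(x)\|/\|X(x_1)\|$ (using that $b_0$ is strictly less than the true rate), then the mean value theorem along the leaf gives the conclusion.

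The main obstacle is step (b): controlling the accumulated nonlinearity so that a strictly-larger infinitesimal rate $1/\lambda_0$ degrades only to $b_0$ over a genuine (if small) distance, while simultaneously tracking the point $P_x(y)$ correctly through the composition of $\phi_1$ with the two flow-projections $P_x$, $P_{x_1}$ — the projections are not canonical and their derivatives contribute factors of $\|X(x)\|/\|X(x_1)\|$ which must be the ones appearing in the statement and not spurious ones. Making the bookkeeping of these flow-speed ratios exact (rather than up to a constant) is the delicate part; everything else is compactness plus the determinant identity above.
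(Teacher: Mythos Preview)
Your approach is essentially the paper's: both hinge on the parallelogram identity
\[
\|\psi_1(u)\|\,\|X(x_1)\| = \bigl|\det(D\phi_1|_{V_x})\bigr|\,\|X(x)\|,\qquad V_x=\operatorname{span}\{X(x),u\},
\]
which converts sectional volume expansion on $F^{cu}$ into the transverse expansion factor with exactly the flow-speed ratio appearing in the statement. The one adjustment you should make is the \emph{direction} of the integration step. Pushing forward a minimizing curve $\gamma$ from the $x$ side produces an expanded curve whose length only \emph{upper}-bounds the endpoint distance at $x_1$, which is the wrong inequality for a lower bound on $d^*_{x_1}$. The paper instead takes the minimizing curve $l_1\subset\cF^{cu}_{x_1}(x_1)\cap B^\perp(x_1)$ at the $x_1$ end and pulls it back via the flow-holonomy $H$ to a curve $l$ joining $x$ to $P_x(y)$; the same area computation then gives $\|DH\|\le\lambda_0 C^5\,\|X(x_1)\|/\|X(x)\|$ with $C=(\lambda_0 b_0)^{-1/6}$, hence $d(x,P_x(y))\le\length(l)\le\lambda_0 C^5(\|X(x_1)\|/\|X(x)\|)\,d^*_{x_1}$, which rearranges to the desired bound after one more factor of $C$ from Lemma~\ref{l.comparedistance}. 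The paper also observes that $l=P_x(f^{-1}(l_1))$ need not lie in $\cF^{cu}_x(x)$, since Corollary~\ref{c.saturation} only saturates Bowen-ball points --- harmless in this direction because any curve joining $x$ to $P_x(y)$ bounds $d(x,P_x(y))$ from above.
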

In other words, Lemma \ref{l.expansiong.away} states that for points $y\in B_\infty(x, \delta')$ , one sees an expansion by a factor of $b_0\frac{\|X(x)\|}{\|X(x_1)\|}$ along the normal direction under the iteration of $f$, as long as the orbit $\{\phi_t(x)\}_{t\in[0,1]}$ stays $\varepsilon$ away from all singularities.

To estimate the expanding property for points travelling near a singularity, we take $\delta_0>0$ small enough (the choice of $\delta_0$ will be made clear in Remark~\ref{r.fakefoliation}). 
For each $\sigma\in\Sing(\phi_t|_\Lambda)$ and $\delta \ll \delta_0$, we consider the set:
$$
J_\delta(\sigma) = \{y: y\in B_{\delta_0}(\sigma)\mbox{ and } \|X(y)\| \le \delta.\}
$$
Next lemma establishes the expansion within $B_\infty(x,\delta'')$ for points travelling through the $\varepsilon$ neighborhood of a singularity. 

\begin{lemma}\label{l.expansion.near}
For every $L>1$ and $\delta_1>0$ small enough, there are constants $\varepsilon$, $\delta''>0$ with $\varepsilon \ll \delta_1$, such that for each singularity $\sigma$, $x\in B_{\delta_0}(\sigma)\cap\tl$ and $T>0$ that satisfies
\begin{itemize}
\item $\{x_t\}_{t\in[0,T]} \subset B_{\delta_0}(\sigma)$ and $\{x_t\}_{t\in[0,T]} \cap B_\varepsilon(\sigma)\ne\emptyset$; 
\item $x,x_T \notin J_{\delta_1}(\sigma), \{x_t\}_{t\in[0,1]}\cap J_{\delta_1}(\sigma) \ne \emptyset$ and $\{x_t\}_{t\in[T-1,T]}\cap J_{\delta_1}(\sigma) \ne \emptyset$,
\end{itemize}
then for every $y\in B_\infty(x,\delta'')$, we have
$$
d^*_{x_T}(x_T,P_{x_T}(y_T))>Ld^*_x(x,P_x(y)).
$$
\end{lemma}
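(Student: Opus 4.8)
The strategy is to reduce the near-singularity estimate to the away-from-singularity estimate of Lemma~\ref{l.expansiong.away}, and to control the ``wasted'' portion of the orbit (the part inside $B_\varepsilon(\sigma)$, where the flow is slow and Lemma~\ref{l.expansiong.away} gives nothing) by exploiting the hyperbolicity of $\sigma$ together with the fake center foliation. Since $\sigma$ is hyperbolic, for $\delta_0$ small the dynamics inside $B_{\delta_0}(\sigma)$ is a $C^1$-small perturbation of the linear saddle $D X(\sigma)$; in particular the strong stable direction at $\sigma$ lies in $E^s$, and the remaining eigenvalues lie in $F^{cu}$. The key point I want to extract is that the flow, while slow near $\sigma$, has a definite \emph{volume} expansion rate on any $2$-plane in the $F^{cu}$ cone (this is the sectional hypothesis $\det(Df|_{V})>1/\lambda_0$, valid also on the cone $C_a(F^{cu})$), and this volume expansion must be \emph{realized on the normal bundle} along the orbit segment, because the flow direction $X$ is contained in $F^{cu}$ and $\|X\|$ is small precisely near $\sigma$: shrinking the flow-speed factor $\|X(x)\|/\|X(x_T)\|$ forces the transverse expansion to compensate.

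Here is the plan in order. First, I set up coordinates: using the hyperbolicity of $\sigma$, the entry point $x$ and exit point $x_T$ both satisfy $\|X(x)\|,\|X(x_T)\|\ge c(\delta_1)>0$ (this is exactly what $x,x_T\notin J_{\delta_1}(\sigma)$ buys us), while somewhere in between $\|X(x_t)\|$ can be as small as we like once $\varepsilon$ is small, since the orbit enters $B_\varepsilon(\sigma)$. Second, I decompose $[0,T]=[0,s_1]\cup[s_1,s_2]\cup[s_2,T]$ where $[0,s_1]$ and $[s_2,T]$ are the ``incoming/outgoing'' segments on which the orbit stays $\varepsilon$-away from $\sigma$ (so Lemma~\ref{l.expansiong.away} applies on each unit-time subinterval), and $[s_1,s_2]$ is the ``core'' inside $B_\varepsilon(\sigma)$. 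Third, on the outer segments, iterating Lemma~\ref{l.expansiong.away} gives
\[
d^*_{x_{s_1}}(x_{s_1},P_{x_{s_1}}(y_{s_1})) \ge b_0^{\,s_1}\,\frac{\|X(x)\|}{\|X(x_{s_1})\|}\,d^*_x(x,P_x(y)),
\]
and similarly on $[s_2,T]$; the product of the two flow-speed ratios telescopes so that the only surviving speed factors are $\|X(x)\|/\|X(x_{s_1})\|$ and $\|X(x_{s_2})\|/\|X(x_T)\|$, and since $x,x_T\notin J_{\delta_1}(\sigma)$ these are bounded below by $c(\delta_1)$ times $1/\|X(x_{s_1})\|$, $\|X(x_{s_2})\|$ respectively. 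Fourth — the heart of the argument — on the core segment $[s_1,s_2]$ I must show
\[
d^*_{x_{s_2}}(x_{s_2},P_{x_{s_2}}(y_{s_2})) \ \ge\ c'\,\frac{\|X(x_{s_1})\|}{\|X(x_{s_2})\|}\cdot(\text{something}\ge 1)\cdot d^*_{x_{s_1}}(x_{s_1},P_{x_{s_1}}(y_{s_1})).
\]
The factor $\|X(x_{s_1})\|/\|X(x_{s_2})\|$ combines with the leftover speed factors from step three to produce a clean ratio $\|X(x)\|/\|X(x_T)\|\ge c(\delta_1)^2$, bounded below. For the ``something $\ge 1$'' part I argue as follows: $y\in B_\infty(x,\delta'')\subset\cF^{cu}_x(x)$ by Lemma~\ref{l.bowenball}, and by Corollary~\ref{c.saturation} the projected points stay in the fake $cu$-leaf; the $2$-plane spanned by $X$ and the direction from $x_t$ to the projected $y_t$ inside $\cF^{cu}$ has determinant growth $\ge (1/\lambda_0)^{s_2-s_1}$ by sectional expansion, while the $X$-component of that growth is exactly $\|X(x_{s_2})\|/\|X(x_{s_1})\|$; dividing, the transverse (normal, $d^*$) factor grows by at least $(1/\lambda_0)^{s_2-s_1}\cdot\|X(x_{s_1})\|/\|X(x_{s_2})\|$, and since the orbit spends a long time $s_2-s_1$ in $B_\varepsilon(\sigma)$ (long because $\|X\|$ is tiny there and $\delta_0$ is fixed) this factor is enormous — in particular $\ge L/(c(\delta_1)^2 b_0^{\text{boundary terms}})$ once $\varepsilon$ is chosen small. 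Finally I collect constants: given $L$ and $\delta_1$, choose $\varepsilon\ll\delta_1$ so that $s_2-s_1$ is forced large enough, then choose $\delta''$ small enough that Lemma~\ref{l.expansiong.away} applies with parameter $\delta'$ on every unit subinterval of the outer segments and that $P_{x_t}(y_t)$ never leaves the fake leaf (using Corollary~\ref{c.saturation}); this yields the claimed $d^*_{x_T}(x_T,P_{x_T}(y_T))>L\,d^*_x(x,P_x(y))$.

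The main obstacle I anticipate is making rigorous the bookkeeping that turns ``volume expansion on a $2$-plane in $F^{cu}$'' into ``metric expansion on the normal line inside the fake $cu$-leaf, \emph{after} dividing out the flow direction,'' uniformly along an orbit segment whose length $T$ is unbounded and along which $\|X\|$ varies wildly. Concretely: the $d^*$ distance is measured in the normal slice $B^\perp_\delta(x_t)\cap\cF^{cu}_{x_t}(x_t)$, which is \emph{not} $f$-invariant, so one cannot simply iterate; one has to track how $P_{x_t}$ distorts lengths (controlled by Lemma~\ref{l.comparedistance} as long as the orbit is $\varepsilon$-away, but near $\sigma$ the distortion $t_{x_t}$ blows up) and pair this with the Jacobian estimate in a way that the error terms stay multiplicatively bounded. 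I expect this to require the fake center foliation $\cF^c$ (the flow direction inside $\cF^{cu}$, via the subfoliation structure of Lemma~\ref{l.fakefoliation} applied to a three-way splitting $E^s\oplus\langle X\rangle\oplus$(rest)) and a careful comparison of $d^*$ with the intrinsic metric on $\cF^{cu}/\cF^c$ — essentially a local product structure near $\sigma$ that replaces the linearization used in the three-dimensional theory. Establishing that product structure cleanly (presumably the content of a lemma like~\ref{l.expansion.near}'s companion, and flagged in the excerpt by ``this lemma also gives the choice of $\delta$'') is where the real work lies.
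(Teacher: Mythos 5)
Your overall intuition is sound: the expansion must come from the sectional volume growth on $2$-planes in $F^{cu}$ after dividing out the flow-direction growth, and the hypotheses $x,x_T\notin J_{\delta_1}(\sigma)$ are exactly what pin $\|X(x)\|/\|X(x_T)\|$ to a bounded ratio, so the surviving factor is the long core exponent. You are also right to flag, at the end, where the real work is. But that flagged step is a genuine gap, and it is precisely what the paper's argument supplies. The trouble with your core estimate is that the flow holonomy from the normal slice $B^\perp(x_{s_1})\cap\cF^{cu}_{x_{s_1}}(x_{s_1})$ to $B^\perp(x_{s_2})\cap\cF^{cu}_{x_{s_2}}(x_{s_2})$ runs for a time $s_2-s_1$ that is unbounded (it blows up as $\varepsilon\to 0$), and the fake $cu$-leaves are invariant only under the time-one map $f$, not under $\phi_t$; so you cannot simply flow the connecting curve from $\tilde{l}_{s_1}$ to $\tilde{l}_{s_2}$ and claim the intermediate curve stays in a controlled foliation (or that the holonomy is even well-defined on the whole curve, not just at the two endpoints lying on $\Orb(x)$ and $\Orb(y)$). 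Without that, the area-to-normal-length conversion is not legitimate along the passage.

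The paper closes this gap with a construction you did not reach. It does not split the segment into far/near/far at all; it works with the whole interval $[0,T]$ and sets up a local product structure \emph{inside} $\cF^{cu}_x(x)$ near $\sigma$: it blends the two families of fake foliations to define the $1$-dimensional center leaves $\bcf^c_x(z)=\cF^{cs}_S(z)\cap\cF^{cu}_x(x)$, and transports a $u$-disk $\cD^u_n\subset\cF^{cu}_{x_n}(x_n)$ by taking components of $f^n(\cD^u)\cap\cF^{cu}_{x_n}(x_n)$; the resulting bracket $[z,x_n]=\bcf^c(z)\pitchfork\cD^u_n$ is $f$-equivariant. It then decomposes the arc from $x_T$ to $P_{x_T}(y_T)$ into a $\bcf^c$-piece $l^c_T$ and a $\cD^u$-piece $l^u_T$, pulls both back by $f^{-T}$ \emph{within the fake leaves} (so the iteration is always well-defined), and only flow-projects to normal slices at the two times $0$ and $T$, where $\|X\|>\delta_1$ makes the projection time small. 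The $u$-piece is killed outright by the uniform backward contraction of the $E^u$ cone, with no area argument at all; the $c$-piece is the one handled by your parallelogram computation, and the transversality needed there is automatic because near the exit the flow direction lies in the $E^u$ cone while $l^c_T$ is tangent to the $E^c$ cone. So your linear-algebra step is the right one but applies cleanly only to the $c$-component; you need the $c/u$ splitting and the $f$-invariant bracket to control the holonomy, and that is where the argument you sketched but did not carry out actually lives.
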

See Figure~\ref{f.2} on Page~\pageref{f.2}. Roughly speaking, this lemma states the following: given $\delta_1>0$, one can always take $\varepsilon\ll\delta_1$ small enough, such that if the orbit of $x$ starts with $\|X(x)\|\approx \delta_1$, gets $\varepsilon$-close to a singularity, then leaves the $\varepsilon$-neighborhood of the singularity to a position $x_T$ where $\|X(x_T)\|\approx\delta_1$ (note that $\|X(x)\|$ and $\|X(x_T)\|$ are ``comparable'', and the flow speed in $B_\varepsilon(\sigma)$ is much smaller than $\delta_1$), then on the orbit segment from $x$ to $x_T$, one picks up an expanding factor of $L>1$ on the normal direction inside the $\cF^{cu}$ leaf.

\subsubsection{Measures $\varepsilon-$away from singularities}
For now we will assume that Lemma~\ref{l.expansiong.away} and~\ref{l.expansion.near} hold. Fix $L>1$, $\delta_1>0$ and take $\varepsilon$ according to Lemma~\ref{l.expansion.near}, and
consider ergodic measures whose supports are $\varepsilon$ away from any singularity. We will prove that for $\delta'>0$ given by Lemma~\ref{l.expansiong.away}, and for {\em every} points of such measures, the $(\infty,\delta')-$Bowen ball is degenerate. Recall that all the singularities of $\phi_t$ in $U$ are exactly those contained in $\Lambda$, all of which are hyperbolic, and thus isolated. Write $\Sing(\phi_t|_{\Lambda}) = \{\sigma_i\}_{i=1}^k$.
\begin{proposition}\label{p.measure.away}
There is $\delta',$ $K_1>0$ and $\tilde{\delta}>0$, such that if $\mu$ is an invariant, ergodic measure which satisfies
$$
\mu(B_\varepsilon(\sigma_i)) = 0 \mbox{ for all } i=1,2,\ldots,k,
$$
then for every $x\in\supp\mu$, $B_\infty(x,\delta') \subset \phi_{(-\tilde{\delta}, \tilde{\delta})}(x)$ with length bounded by $K_1$.
\end{proposition}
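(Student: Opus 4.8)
The plan is to combine the two structural facts already established: that $B_\infty(x,\delta')$ lies inside the fake leaf $\cF^{cu}_x(x)$ (Lemma~\ref{l.bowenball}) together with the weak saturation of Corollary~\ref{c.saturation}, and that along orbits staying $\varepsilon$-away from the singularities one picks up a definite expansion rate $b_0>1$ on the normal direction at each unit time step (Lemma~\ref{l.expansiong.away}). First I would fix $1<b_0<\lambda_0^{-1}$ and let $\delta'>0$ be small enough that Lemma~\ref{l.expansiong.away} applies, that $\delta'<r_0/2$ so Lemma~\ref{l.bowenball} and Corollary~\ref{c.saturation} apply, and (using Lemma~\ref{l.comparedistance} with some $L_0$ close to $1$) that the projected distance $d^*_{x_n}(x_n,P_{x_n}(y_n))$ is comparable to the ambient distance $d(x_n,y_n)$. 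Given $x\in\supp\mu$ with $\mu(B_\varepsilon(\sigma_i))=0$ for all $i$, ergodicity of $\mu$ forces (by Poincaré recurrence applied to the set $\supp\mu\setminus\bigcup_i B_\varepsilon(\sigma_i)$, which has full measure) that $\supp\mu$ itself is disjoint from $\bigcup_i B_\varepsilon(\sigma_i)$; hence the entire orbit $\{x_t\}_{t\in\RR}$ stays $\varepsilon$-away from all singularities, so Lemma~\ref{l.expansiong.away} can be iterated at every integer time, forward and backward.

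Next, take $y\in B_\infty(x,\delta')$. By Lemma~\ref{l.bowenball}, $y\in\cF^{cu}_x(x)$, and since $t_x(y)$ is uniformly small (as $x$ is $\varepsilon$-away from singularities and $\delta'$ is small) Corollary~\ref{c.saturation} guarantees $P_x(y)\in\cF^{cu}_x(x)$, so $d^*_x(x,P_x(y))$ is defined; the same holds at every $x_n$. Iterating Lemma~\ref{l.expansiong.away} forward gives, by telescoping the factors $b_0\,\|X(x_{n})\|/\|X(x_{n+1})\|$,
$$
d^*_{x_n}(x_n,P_{x_n}(y_n)) > b_0^{\,n}\,\frac{\|X(x)\|}{\|X(x_n)\|}\,d^*_x(x,P_x(y)),
$$
and since $\|X\|$ is bounded above by $D_0$ and below by a positive constant on the compact set $\supp\mu$ (which avoids the singularities), the ratio $\|X(x)\|/\|X(x_n)\|$ is bounded below by a positive constant uniformly in $n$. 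If $d^*_x(x,P_x(y))>0$ this forces $d^*_{x_n}(x_n,P_{x_n}(y_n))\to\infty$, contradicting the fact that $y_n\in B_{\delta'}(x_n)$ and hence $d^*_{x_n}(x_n,P_{x_n}(y_n))\le L_0\delta'$ is uniformly bounded. Therefore $d^*_x(x,P_x(y))=0$, i.e.\ $P_x(y)=x$, which means $y$ lies on the flow orbit of $x$: $y=\phi_{s}(x)$ for some $s=-t_x(y)$. Running the same argument backward (iterating Lemma~\ref{l.expansiong.away} in negative time, which is legitimate since the whole orbit avoids the $\varepsilon$-neighborhoods) rules out nothing new but confirms consistency; the point is simply that $B_\infty(x,\delta')\subset\phi_\RR(x)$.

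Finally, for the length bound: since $y\in B_\infty(x,\delta')$ we have $d(\phi_n(y),\phi_n(x))<\delta'$ for all $n\in\ZZ$, and with $y=\phi_s(x)$ this says the flow segment of $\Orb(x)$ of time-length $|s|$ has both endpoints within $\delta'$ of $x$ after every integer iterate; because the flow speed is bounded below by a positive constant $c_0$ on $\supp\mu$, a flow segment of time-length exceeding $\delta'/c_0$ cannot stay inside a ball of radius $\delta'$, so $|s|< K_1 := \delta'/c_0$. This shows $B_\infty(x,\delta')\subset\phi_{(-\tilde\delta,\tilde\delta)}(x)$ with $\tilde\delta=K_1$ and flow-length bounded by $K_1$, completing the proof. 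The main obstacle I anticipate is the bookkeeping in passing between the ambient distance $d$, the intrinsic fake-leaf distance $d^*$, and the flow-direction projection $P_x$ — in particular checking that $P_{x_n}(y_n)$ stays in the fake leaf $\cF^{cu}_{x_n}(x_n)$ for all $n$ so that $d^*$ is meaningful throughout the iteration — but Corollary~\ref{c.saturation} and Lemma~\ref{l.comparedistance} are exactly tailored to handle this, so it should be routine rather than deep.
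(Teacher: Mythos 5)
Your proof takes the same route as the paper: it iterates Lemma~\ref{l.expansiong.away} to get the telescoping estimate $d^*_{x_n}(x_n,P_{x_n}(y_n)) > b_0^n \frac{\|X(x)\|}{\|X(x_n)\|} d^*_x(x,P_x(y))$, notes that the ratio of flow speeds is uniformly bounded away from zero since $\supp\mu$ avoids the singularities, and forces $d^*_x(x,P_x(y))=0$ because the left side is bounded. Two small remarks. First, the appeal to Poincar\'e recurrence is not needed to see that $\supp\mu$ avoids the open balls $B_\varepsilon(\sigma_i)$: if $\mu(B_\varepsilon(\sigma_i))=0$, then $M\setminus B_\varepsilon(\sigma_i)$ is a closed set of full measure, hence contains $\supp\mu$ by definition of support. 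Second, the length-bound argument at the end is slightly off as phrased: ``a flow segment of time-length exceeding $\delta'/c_0$ cannot stay inside a ball of radius $\delta'$'' is not literally true for an arbitrary curve with speed bounded below (long arcs can loop inside a small ball), and needs either local near-linearity of the flow or, more cleanly, the observation already built into the setup: $P_x(y)=x$ means $y=\phi_{-t_x(y)}(x)$, and the domain of $P_x$ was chosen so that $|t_x(y)|<r_1$ uniformly for $x$ outside the $\varepsilon$-neighborhood of $\Sing(X)$, so $\tilde\delta=r_1$ works directly and $K_1\le D_0 r_1$ bounds the arc length. With these cosmetic fixes the argument is exactly the paper's.
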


\begin{proof} 
Since $\supp\mu$ is $\varepsilon$ away from all singularities, so are all points $x\in\supp\mu$. For every $x\in\supp\mu$, apply Lemma~\ref{l.expansiong.away} on $\{x_i:i\in\NN\}$ yields
$$
d^*_{x_n}(x_n,P_{x_n}(y_n))>b_0^n\frac{\|X(x)\|}{\|X(x_n)\|}d^*_x(x,P_x(y)),
$$
for every $y\in B_\infty(x,\delta')$ and every $n>0$.

The  term $\frac{\|X(x)\|}{\|X(x_n)\|}$ is bounded from above and below since $\{x_t\}$ stays away from singularities. $d^*_{x_n}(x_n,P_{x_n}(y_n))$ is also bounded since $y\in B_\infty(x,\delta')$. Therefore $d^*_x(x,P_x(y))$ must be $0$, so is $d(x,P_x(y))$, which shows that $y$ is indeed contained in the local orbit of $x$. One can take  $\tilde{\delta}>0$ and $K_1>0$ uniform in $x\in\supp\mu$, such that every connected component of $\Orb(x)\cap B_{\delta'}(x)$ has length bounded by $K_1$ and flow time bounded by $\tilde{\delta}$. This concludes the proof of Proposition~\ref{p.measure.away}
\end{proof}

%%%%%%%%%%%%%%%%%%%%%%%%%%%%%%%%%
%%%%%%%%%%%%%%%%%%%%%%%%%%%%%%%%%

\subsubsection{Measures near singularities and proof of Theorem~\ref{m.almostexpansive}}\label{sss.singularity}

We have shown that if $\mu$ is a measure whose support is $\varepsilon$ away from singularities, then every point $x$ in the support of $\mu$ has degenerate  infinite Bowen ball,  i.e., the infinite Bowen ball is reduced to an orbit segment. It remains to consider measures whose support intersects the $\varepsilon$ neighborhood of some singularity.

Recall that $\delta'$ is given by Lemma~\ref{l.expansiong.away} and $\delta''$ by Lemma~\ref{l.expansion.near}. The main proposition in this subsection is the following:

\begin{proposition}\label{p.measure.near}
Let $\delta=\min\{\delta',\delta''\}$. For every invariant, ergodic measure $\mu$ on $\tl$ with $B_\varepsilon(\Sing(X))\cap \supp\mu\ne\emptyset$ and for $\mu$ almost every point $x$, the infinite Bowen ball $B_\infty(x, \delta)$ is a segment of the orbit of $x$.
\end{proposition}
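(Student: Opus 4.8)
The plan is to follow the same strategy as in Proposition~\ref{p.measure.away}: show that for $\mu$-typical $x$, every $y \in B_\infty(x,\delta)$ has $d^*_{x_n}(x_n, P_{x_n}(y_n))$ growing without bound unless $d^*_x(x, P_x(y)) = 0$, so that by Corollary~\ref{c.saturation} the point $y$ must lie on the orbit of $x$. The difference is that now the orbit of $x$ enters $B_\varepsilon(\sigma)$ for some singularity $\sigma$ infinitely often, so we cannot use Lemma~\ref{l.expansiong.away} uniformly; instead we will alternate between the two estimates.

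First I would fix $\mu$ with $B_\varepsilon(\Sing(X)) \cap \supp\mu \neq \emptyset$ and, using that the $\sigma_i$ are hyperbolic and hence that $\mu(\{\sigma_i\}) = 0$ (so $\mu$ is non-trivial), decompose a $\mu$-typical orbit $\{x_t\}$ into \emph{excursions near singularities} and \emph{excursions away from singularities}. Precisely, since the flow speed is continuous and vanishes only at the $\sigma_i$, the set $J_{\delta_1}(\sigma)$ separates the two regimes: a typical orbit segment that visits $B_\varepsilon(\sigma)$ must first cross $J_{\delta_1}(\sigma)$ on the way in and cross it again on the way out (this is where I would need a short argument that the orbit cannot stay forever inside $B_{\delta_0}(\sigma) \setminus J_{\delta_1}(\sigma)$, using that such a region contains no singularity and invariance would force it into $\Sing$; alternatively, Poincar\'e recurrence plus $\mu(\{\sigma\})=0$ gives that the orbit returns to the complement of any neighborhood of $\sigma$). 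Label the successive times $0 = s_0 < s_1 < s_2 < \cdots$ at which the orbit passes through $\bigcup_i J_{\delta_1}(\sigma_i)$ in the appropriate ``entering'' or ``leaving'' sense, so that on each interval $[s_{2j}, s_{2j+1}]$ the orbit performs a singularity excursion of the type covered by Lemma~\ref{l.expansion.near} (with $T = s_{2j+1}-s_{2j}$), and on each interval $[s_{2j+1}, s_{2j+2}]$ the orbit stays $\varepsilon$-away from all singularities, where we can iterate Lemma~\ref{l.expansiong.away} over the integer steps it contains.

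Next, for $y \in B_\infty(x,\delta)$ with $\delta = \min\{\delta', \delta''\}$, I would concatenate the estimates. Across a singularity excursion of length $T$, Lemma~\ref{l.expansion.near} multiplies $d^*(x, P_x(y))$ by at least $L > 1$. Across an away-from-singularity stretch, Lemma~\ref{l.expansiong.away} applied step by step multiplies by at least $b_0^m \frac{\|X(\cdot)\|}{\|X(\cdot)\|}$ where $m$ is the number of integer steps and the flow-speed ratio telescopes. The key bookkeeping point is that the flow-speed factor $\frac{\|X(x)\|}{\|X(\cdot)\|}$ picked up on the way \emph{into} $J_{\delta_1}$ and then \emph{out} of $J_{\delta_1}$ essentially cancels — both endpoints of an excursion sit near the level set $\|X\| \approx \delta_1$ by the hypotheses of Lemma~\ref{l.expansion.near} — so the accumulated distortion from flow speed stays bounded. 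Hence, reading off $d^*_{x_{s_{2j}}}(x_{s_{2j}}, P_{x_{s_{2j}}}(y_{s_{2j}}))$ at the entering times, we get a factor of at least $L^j \cdot (\text{bounded})$ times $d^*_x(x, P_x(y))$, while the left side is bounded because $y \in B_\infty(x,\delta)$ and $P_{x_{s_{2j}}}(y_{s_{2j}}) \in \cF^{cu}_{x_{s_{2j}}}(x_{s_{2j}})$ lies in a fixed-size fake leaf. If $x$ has infinitely many singularity excursions (a full-measure condition, since $\mu(B_\varepsilon(\sigma)) > 0$ for some $\sigma$ and Birkhoff gives infinitely many returns), then $j \to \infty$ forces $d^*_x(x, P_x(y)) = 0$, so $P_x(y) = x$, and by the weak saturation of Corollary~\ref{c.saturation} together with $|t_x(y)| < r_1$, $y$ lies on the orbit segment of $x$ through $B_\delta(x)$. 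A uniform bound on the length of $B_\infty(x,\delta)$ as an orbit segment follows exactly as in Proposition~\ref{p.measure.away}, away from singularities; near a singularity one uses instead that $B_\infty(x,\delta) \subset \cF^{cu}_x(x)$ has small $d^*_x$-radius and the flow is injective there.

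The main obstacle I expect is the combinatorial/analytic control of the away-from-singularity stretches whose length (number of integer steps $m$) can be as small as zero or one: one must check that a short hop between two consecutive singularity excursions does not \emph{contract} $d^*$ by more than a bounded amount, so that it can be absorbed into the constants without killing the $L^j$ growth. This is handled by noting that over any single unit of time the distortion of $d^*$ is bounded by a constant depending only on $\|X\|_{C^1}$ and the geometry of the fake foliation, and there is at most one ``short'' stretch between two excursions attached to the same singularity; combined with the cancellation of the flow-speed factors this keeps the product of all the ``bad'' constants bounded by a geometric series, while the good factors $L^j$ diverge. A secondary subtlety is ensuring the hypotheses of Lemma~\ref{l.expansion.near} (namely $x, x_T \notin J_{\delta_1}(\sigma)$ and the segments of length $1$ at both ends meet $J_{\delta_1}(\sigma)$) are genuinely met at the chosen cutting times $s_{2j}, s_{2j+1}$; this is arranged by defining the cutting times as the \emph{last} entry into $J_{\delta_1}$ before the orbit reaches $B_\varepsilon(\sigma)$ and the \emph{first} exit afterward, and choosing $\delta_0, \delta_1$ so that the flow speed on $\partial J_{\delta_1}(\sigma)$ is exactly the prescribed size.
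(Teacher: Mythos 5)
Your proposal follows essentially the same strategy as the paper's proof: decompose a $\mu$-typical orbit into alternating near-singularity excursions (handled by Lemma~\ref{l.expansion.near}) and away-from-singularity stretches (handled by Lemma~\ref{l.expansiong.away}), concatenate the expansion estimates along the $\cF^{cu}$ leaf, and conclude from the unbounded growth that $d^*_x(x,P_x(y))=0$. The decomposition into times $\{s_j\}$ is the same, up to relabeling, as the paper's sequence $T_1\le T_1'<T_2<T_2'<\cdots$.

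The place where you correctly identify a delicate point but leave it vague is the bookkeeping of flow-speed ratios and "short hops." The paper resolves both cleanly and you should compare your heuristics with what is actually done there. First, rather than arguing that the flow-speed factors "essentially cancel," the paper sets
$D'=\inf\{\|X(x)\|/\|X(y)\|\}$ over the relevant boundary pairs near $\partial J_{\delta_1}$, notes $D'\le 1$, and \emph{chooses} $L=b_0/D'$ in Lemma~\ref{l.expansion.near}. Then the away-stretch from $T_n'$ to $T_{n+1}$ contributes a factor $\ge b_0^{\,T_{n+1}-T_n'}D'$ and the near-excursion from $T_n$ to $T_n'$ contributes a factor $\ge L=b_0/D'$, so each full cycle gives a clean factor $\ge b_0$ with the $D'$ terms cancelling exactly; there is no ``bounded but accumulating'' error to worry about. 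Second, the issue of arbitrarily short away-stretches does not arise at all, because before starting the argument the paper shrinks $\delta_1$ and $\varepsilon$ so that the exit time from $J_{\delta_1}(\sigma_i)$ and the next entry time into any $J_{\delta_1}(\sigma_j)$ are separated by more than $3$; your proposed geometric-series patch (which relies on the unjustified claim that at most one short stretch occurs per singularity) is not needed and would be harder to make rigorous. Finally, a small overstatement: the proposition only asserts that $B_\infty(x,\delta)$ is an orbit segment for $\mu$-a.e.\ $x$; a uniform length bound in the style of Proposition~\ref{p.measure.away} is not claimed here and is not available near singularities where flow time on a fixed-length segment is unbounded.
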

This will conclude the proof of Theorem~\ref{m.almostexpansive}, and Theorem~\ref{m.h-expansive} will follow from Lemma~\ref{l.almostexpansive} with $\delta = \min\{\delta',\delta''\}$. Note that $\delta'$ and $\delta''$ only depend on the hyperbolicity of the singularities and the sectional volume expanding rate, and thus can be made continuous with respect to the flow $X$. This  gives the robust $\delta$-entropy expansiveness.

\begin{proof}[Proof of Proposition~\ref{p.measure.near}]

Recall that $1<b_0<\frac1{\lambda_0}$. We can shrink $\delta_1$ and $\varepsilon$ if necessary, such that for every $1\le i, j \le k$ ($i$ and $j$ could be equal) and for each $x\in B_{\varepsilon}(\sigma_i)$, if we write $T = \inf \{t>0: \phi_T(x)\notin J_{\delta_1}(\sigma_i)\}$ and $T' = \inf \{t>0: \phi_t(x)\in J_{\delta_1}(\sigma_j)\}$ then we have $T' - T > 3$.

Let 
\begin{align*}\numberthis\label{n1}
D' =\inf_{x,y}\Bigg\{ &\frac{\|X(x)\|}{\|X(y)\|}: \mbox{ there are singularities } \sigma, \sigma' \mbox{ such that } \\
&x \in \{\phi_t(\partial(J_{\delta_1}(\sigma)))\}_{t\in[0,1]} \mbox{ and }y \in \{\phi_t(\partial(J_{\delta_1}(\sigma')))\}_{t\in[-1,0]}\Bigg\}.
\end{align*}
Note that $D'\le 1$. Let $L = \frac{b_0}{D'}>1$ and   $\varepsilon,\delta'$ be given by Lemma~\ref{l.expansion.near}. Let $\delta''$ be given by Lemma~\ref{l.expansiong.away} using the $\varepsilon$ above, and take $\delta = \min\{\delta',\delta''\}$. We verify that $\delta$ satisfies Proposition~\ref{p.measure.near}.

Let $\mu$ be a non-trivial ergodic measure on $\tl$ such that $B_\varepsilon(\Sing(X))\cap \supp\mu\ne\emptyset$. Let $x$ be a typical point of $\mu$. Then the orbit of $x$ must visit the $\varepsilon$ neighborhood  of singularities infinitely many times. The idea of the proof is very simple: we use Lemma~\ref{l.expansion.near} to get expansion for each time the orbit travels through the $\varepsilon$-neighborhood of $\Sing(X)$, and use Lemma~\ref{l.expansiong.away} to control the expansion in-between.

To this end we define a sequence $0\le T_1\le T_1'<T_2<T_2'<\ldots$, such that for each $n$, we have 
\begin{enumerate}
\item The time interval $[T_n,T_n']$ contains the orbit segment travelling through the $\varepsilon$-neighborhood:
$\{x_t\}_{[T_n,T_n']}\subset B_{\delta_0}(\sigma)$ and $\{x_t\}_{[T_n,T_n']}\cap B_\varepsilon(\sigma)\ne\emptyset$, for some singularity $\sigma$.
\item The start and end point of $\{x_t\}_{[T_n,T_n']}$ have ``comparable" flow speed:\\ $x_{T_n},x_{T_n'} \notin J_{\delta_1}(\sigma), \{x_t\}_{t\in[T_n,T_n+1]}\cap J_{\delta_1}(\sigma) \ne \emptyset$ and $\{x_t\}_{t\in[Tn'-1,Tn']}\cap J_{\delta_1}(\sigma) \ne \emptyset$.
\item The orbit segment corresponding to the time interval $[T_n',T_{n+1}]$ are  outside $\varepsilon$-neighborhood of singularities: $\{x_t\}_{t\in[T'_n,T_{n+1}]} \cap B_\varepsilon(\Sing(X)) = \emptyset$.
\end{enumerate}
By Lemma~\ref{l.expansiong.away} we obtain for  $y\in B_\infty(x,\delta)$,
\begin{align*}
d^*_{x_{T_{n+1}}}(x_{T_{n+1}}, P_{x_{T_{n+1}}}(y_{T_{n+1}}))&>b_0^{T_{n+1}-T_n'}\frac{\|X(x_{T_n'})\|}{\|X(x_{T_{n+1}})\|}d^*_{x_{T'_{n}}}(x_{T'_{n}}, P_{x_{T'_{n}}}(y_{T'_{n}}))\\
&>b_0^{T_{n+1} - T_n'} D' d^*_{x_{T'_{n}}}(x_{T'_{n}}, P_{x_{T'_{n}}}(y_{T'_{n}})).
\end{align*}
Applying Lemma~\ref{p.measure.near} on the time interval $[T_n,T_n']$ yields
$$
 d^*_{x_{T'_{n}}}(x_{T'_{n}}, P_{x_{T'_{n}}}(y_{T'_{n}})) > \frac{b_0}{D'} d^*_{x_{T_{n}}}(x_{T_{n}}, P_{x_{T_{n}}}(y_{T_{n}})).
$$
Inductively we get:
$$
 d^*_{x_{T'_{n}}}(x_{T'_{n}}, P_{x_{T'_{n}}}(y_{T'_{n}}))> b_0^n d^*_{x_{T_{1}}}(x_{T_{1}}, P_{x_{T_{1}}}(y_{T_{1}})),
$$
but the left hand side is bounded since $y\in B_\infty(x,\delta)$, thus $d^*_{x_{T_{1}}}(x_{T_{1}}, P_{x_{T_{1}}}(y_{T_{1}}))=0$, that is, $y$ belongs to the local orbit of $x$.

This proves Proposition~\ref{p.measure.near}, and Theorem~\ref{m.almostexpansive} now follows from Lemma~\ref{l.almostexpansive}, Proposition~\ref{p.measure.away} and~\ref{p.measure.near}.
\end{proof}

%%%%%%%%%%%%%%%%%%%%%%%%%%%%%%%%%
%%%%%%%%%%%%%%%%%%%%%%%%%%%%%%%%%

\subsubsection{Proof of Lemma~\ref{l.expansiong.away} and~\ref{l.expansion.near}}
\begin{proof}[Proof of Lemma~\ref{l.expansiong.away}]

The idea of the proof is very simple: we consider the `parallelogram' generated by $X(x)$  and the vector joining $x$ and $P_x(y)$, whose area is approximately  $\|X(x)\|\cdot d^*_x(x, P_x(y))$. Then we compare it with the `parallelogram' generated by $X(x_1)$ and the vector joining $x_1$ and $P_{x_1}(y_1)$, which has area approximately $\|X(x_1)\|\cdot d^*_{x_1}(x_1, P_{x_1}(y_1)) $. The expanding factor $b_0$ is then given by the sectional hyperbolicity on $F^{cu}$.

To this end write $C = (\frac{1}{\lambda_0b_0})^{1/6}$ and take $0 < t_0 < \min\{r_0/2,r_1/2\}$ small enough, such that for any $2$-dimensional subspace $\Sigma$ in the tangent space, we have $1/C < \Jac(\Phi_t|_\Sigma) < C$ for any $|t| < t_0$.

For two vectors $u, v \in T_xM$, denote by $\cP[u, v]$ the parallelogram defined by these two vectors and $A(u, v)$ its area. For $\delta > 0$ sufficiently small, for $y \in B_\infty(x, \delta)$, we have  $|t_x(y)|, |t_{x_1}(y_1)| < t_0< r_1$. Then by Corollary~\ref{c.saturation}, $P_x(y) \in \cF^{cu}_x(x) \cap B^\perp_{r_0}(x)$ and similar relation holds for $P_{x_1}(y_1)$.

There is $0 < \varepsilon_1 \ll r_0/2$ depending on $\varepsilon$ and $b_0$, such that for any $z \in B^\perp_{\varepsilon_1}(x_t)$, the following conditions are satisfied:
\begin{enumerate}
\item  $\frac1C <\frac{\|X(z)\|}{\|X(x_t)\|} < C$.
\item For $v \in T_zB^\perp_{\varepsilon_1}(x_t)$, we have 
$\frac1C\|v\|\|X(z)\| \le A(v, X(z)) \le C\|v\|\|X(z)\|$.
\end{enumerate}
We may further suppose that $\delta$ is sufficiently small, such that 
$d^*_x(x, P_x(y)) < \varepsilon_1,d^*_{x_1}(x_1, P_{x_1}(y_1)) < \varepsilon_1 $, and by Lemma~\ref{l.comparedistance}:
$$
d(x, P_x(y)) < d^*_x(x, P_x(y)) < Cd(x, P_x(y)),
$$
and the same holds for $x_1$ and $y_1$. See Figure \ref{f.1}.
\begin{figure}[h]
	\centering
	\includegraphics[scale=1]{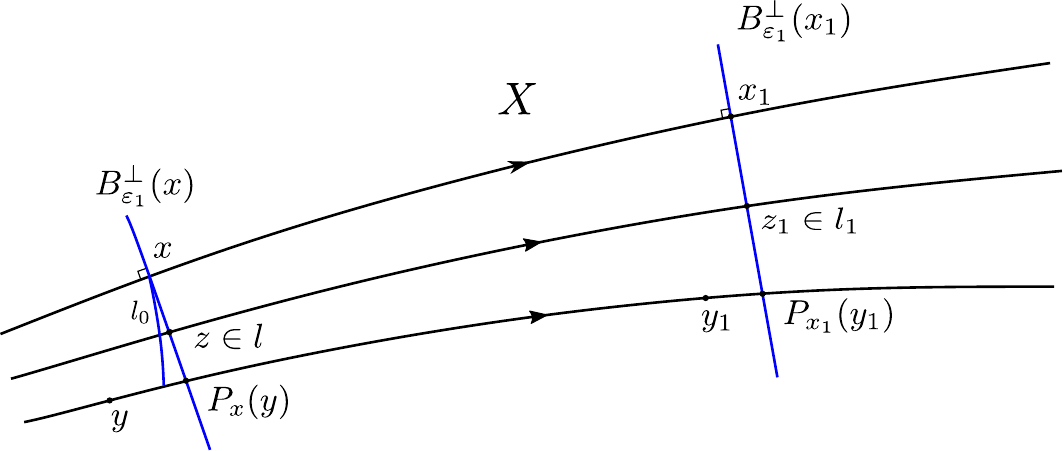}
	\caption{}
	\label{f.1}
\end{figure}

Take a curve $l_1 \subset  B^\perp_{\varepsilon_1}(x_1) \cap \cF^{cu}_{x_1}(x_1)$ which links $x_1$ and $P_{x_1}(y_1)$ with $\length(l_1) = d^*_{x_1}(x_1, P_{x_1}(y_1)))$, and write $l_0 = f^{-1}(l_1) $ and $l = P_x(l_0)$, we may suppose
$|t_x|_{l_0}| \le t_0$ and $l \subset B^\perp_{\varepsilon_1}(x)$. Then $l$ is a curve contained in $B^\perp_{\varepsilon_1}(x)$ which connects $x$ and $P_x(y)$. We note that although $l_0\subset \cF^{cu}_x(x)$ by the local invariance of fake leaves, $l$ is not necessarily contained in $\cF^{cu}_x(x)$ (recall that the saturation property only applies to points in the Bowen ball). Denote $H$ the smooth holonomy map between $l_1$ and $l$ which is induced by flow, then $H(P_{x_1}(y_1)) = P_x(y)$. We claim that
$$
\|DH\| \le \lambda_0C^5\frac{\|X(x_1)\|}{\|X(x)\|},
$$
which implies that
$$
d(x, P_x(y)) \le \length(l) \le \lambda_0C^5\frac{\|X(x_1)\|}{\|X(x)\|}d^*_{x_1}(x_1, P_{x_1}(y_1)).
$$
Then the lemma follows by changing  $d(x, P_x(y))$ to $d^*_{x}(x, P_{x}(y))$, resulting in an extra power of $C$, and our choice of $C = (\frac{1}{\lambda_0b_0})^{1/6}$.

It remains to prove this claim. 

For any $z_1 \in l_1 $, denote by $v_1$ a tangent vector of $l_1$, write $z_0=f^{-1}(z_1)$ and $z  = \phi_{t_x(z_0)}(z_0) \in l$. Also write $v_0 = DH^{-1}(v_1)$.

Then by sectional-hyperbolic,
$$
A(\Phi_{-1}(v_1), X(z_0)) \le \lambda_0 A(v_1, X(z_1)).
$$

Because $|t_x|_{l_0}| < t_0$, by the assumption on $t_0$,

\begin{equation}\label{e.area1}
A(\Phi_{t_x(z_0)}\left(\cP[\Phi_{-1}(v_1), X(z_0)]\right))\le  C\lambda_0A(v_1, X(z_1)).
\end{equation}
Since $\Phi_{t_x(z_0)}\Phi_{-1}(v_1)= \Phi_{-1+t_x(z_0)}(v_1)$ and $\Phi_{t_x(z_0)}(X(z_0)) = X(z)$, we have\begin{equation}\label{e.area2}
A(\Phi_{t_x(z_0)}\left(\cP[\Phi_{-1}(v_1), X(z_0)]\right))= A(\Phi_{-1+t_x(z_0)}(v_1),X(z)).
\end{equation}
Note that $v_0=DH^{-1}(v_1)$ is the projection of $\Phi_{-1+t_x(z_0)}(v_1)$ along $X(z)$ on $T_zB^\perp(x)$,
combine equations (\ref{e.area1}), (\ref{e.area2}), we obtain
\begin{equation}
A(v_0, X(z)) = A(\Phi_{-1+t_x(z_0)}(v_1),X(z))\le C\lambda_0A(v_1, X(z_1)).
\end{equation}
By the assumptions (1) and (2) above on $\varepsilon_1$, we get
$$
C^{-2}\|v_0\|\|X(x)\| \le C^3\lambda_0\|v_1\|\|X(x_1)\|,
$$
which implies the  desired claim:
$$
\|DH\| \le \frac{v_0}{v_1} \le C^5 \lambda_0 \frac{\|X(x_1)\|}{\|X(x)\|}.
$$
\end{proof}

To prove Lemma~\ref{l.expansion.near}, we start by showing that there is a finer dominated splitting on $\Sing(\phi_t|_U)$. Recall that we take $U$ small enough, such that $\Sing(\phi_t|_U)= \Sing(\phi_t|_{\Lambda}) = \{\sigma_1,\ldots,\sigma_k\}$. 

\begin{lemma}\label{l.domination.singularity}
For every $j=1,\ldots,k$, $Df|_{F^{cu}_{\sigma_j}}$ has exactly one eigenvalue with norm less than one. As a result, there is a hyperbolic splitting $E^s\oplus E^c\oplus E^u$ on $\Sing(\phi_t|_U)$, with $E^c \oplus E^u = F^{cu}$.
\end{lemma}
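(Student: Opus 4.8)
The plan is to exploit the sectional-expanding property of $F^{cu}$ restricted to the invariant subspace $F^{cu}_{\sigma_j}$ at a singularity, combined with the fact that the linear cocycle over a fixed point is just the iteration of a single linear map, so Lyapunov exponents are literally the moduli of eigenvalues. First I would fix $j$ and write $A = Df|_{F^{cu}_{\sigma_j}} = \Phi_1|_{F^{cu}_{\sigma_j}}$, a linear isomorphism of the vector space $F^{cu}_{\sigma_j}$ of dimension $m = \dim F^{cu} \ge 2$. List the eigenvalues of $A$ (with multiplicity, over $\mathbb{C}$) as $\mu_1,\dots,\mu_m$ ordered so that $|\mu_1|\le |\mu_2|\le\cdots\le|\mu_m|$. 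The claim is exactly that $|\mu_1|<1\le|\mu_2|$, i.e.\ precisely one eigenvalue strictly inside the unit circle and none on it among the remaining ones — which then yields the hyperbolic splitting $E^s\oplus E^c\oplus E^u$ at $\sigma_j$ with $E^c$ the (one-dimensional, generalized) eigenspace of the smallest eigenvalue $\mu_1$ and $E^u$ the sum of the others, all contained in $F^{cu}$ since $E^s$ is already split off by sectional hyperbolicity.

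The heart of the argument is a volume estimate. For any $2$-dimensional subspace $V\subset F^{cu}_{\sigma_j}$, Definition~\ref{d.sectionalhyperbolic} gives $|\det D\phi_t(\sigma_j)|_V| \ge Ce^{\lambda t}$ for all $t>0$; taking $t=n\in\NN$ and using that at a singularity $D\phi_n(\sigma_j)=A^n$, we get $|\det(A^n|_V)|\ge Ce^{\lambda n}\to\infty$. Hence for every $2$-plane $V$, the product of the two singular values of $A$ restricted (appropriately) to the cocycle along $V$ grows exponentially; more cleanly, choosing $V$ to be a $\Phi_t$-invariant $2$-plane realizing the two smallest Lyapunov exponents (e.g.\ a real invariant plane associated to $\mu_1$ together with the next eigendirection, or spanned by real/imaginary parts of a complex eigenvector when $\mu_1,\mu_2$ are a complex-conjugate pair), the growth rate of $|\det(A^n|_V)|$ equals $n(\log|\mu_1|+\log|\mu_2|)$ up to $o(n)$. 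Comparing with $Ce^{\lambda n}$ forces
$$
\log|\mu_1| + \log|\mu_2| \ge \lambda > 0.
$$
This immediately gives $|\mu_2|>1$ (since the smallest possible value of $|\mu_1|$ contribution still needs the sum positive, and $|\mu_1|\le|\mu_2|$ forces $2\log|\mu_2|\ge\log|\mu_1|+\log|\mu_2|>0$), hence in fact $1<|\mu_2|\le\cdots\le|\mu_m|$. So at most one eigenvalue can have modulus $\le 1$.

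It remains to show that at least one eigenvalue has modulus $<1$, equivalently $|\mu_1|<1$. For this I would use the domination $E^s \oplus F^{cu}$ together with the uniform contraction of $E^s$: domination at the fixed point $\sigma_j$ means that the weakest expansion/contraction rate on $F^{cu}_{\sigma_j}$ strictly dominates the strongest rate on $E^s_{\sigma_j}$, and since $X(\sigma_j)=0$ the flow direction gives no exponent to interfere. Concretely, if all eigenvalues of $A$ had modulus $\ge 1$ then $F^{cu}_{\sigma_j}$ would contain no contracting direction; but then, recalling the general fact (cf.\ the Remark after the transitivity lemma and Lemma~\ref{l.domination.singularity}'s context) that a sectional hyperbolic set's center-unstable bundle over a hyperbolic singularity must contain the strong stable direction of that singularity if the singularity is to be accumulated in the relevant way — more robustly, I would argue directly: $\sigma_j$ is a hyperbolic singularity of the flow by hypothesis, so $T_{\sigma_j}M$ splits into $DX(\sigma_j)$-stable and unstable parts; the stable part is nontrivial (a hyperbolic singularity of a flow on a manifold of dimension $\ge 2$ lying in a sectional hyperbolic set that is not just the singularity has a nontrivial stable index — indeed $\dim E^s\ge 1$ already, but one needs the stable index to be $\ge 2$), and by domination the part of the stable subspace not contained in $E^s_{\sigma_j}$ must lie in $F^{cu}_{\sigma_j}$, contributing at least one eigenvalue of $A$ with modulus $<1$. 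The cleanest packaging: $E^s_{\sigma_j}$ has some dimension $s$, and the hyperbolic splitting of $DX(\sigma_j)$ has stable dimension $\operatorname{ind}(\sigma_j)\ge s$; if $\operatorname{ind}(\sigma_j)=s$ then $F^{cu}_{\sigma_j}$ is entirely the unstable subspace of $\sigma_j$, all eigenvalues of $A$ have modulus $>1$, and one checks this contradicts being accumulated by regular orbits of $\Lambda$ (the classical "singular hyperbolic singularities are Lorenz-like" obstruction) — so $\operatorname{ind}(\sigma_j) = s+1$, giving exactly one eigenvalue of $A=Df|_{F^{cu}_{\sigma_j}}$ of modulus $<1$.

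The main obstacle I expect is the lower bound $|\mu_1|<1$: the upper bound (at most one eigenvalue inside the unit disk) is a clean consequence of sectional expansion, but ruling out the case where $F^{cu}_{\sigma_j}$ is purely expanding requires knowing that the singularity genuinely has a "partially" stable behavior within $F^{cu}$ — this is where one must invoke that $\Lambda$ contains regular orbits accumulating the singularity (or that $\Lambda \ne \{\sigma_j\}$ and is chain transitive near $\sigma_j$) so that the $\lambda$-lemma / domination forces the strong stable foliation of $\sigma_j$ to sit inside $F^{cu}$; making this precise in the $C^1$, possibly non-isolated, high-dimensional setting without linearization is the delicate point, and I suspect the authors handle it via the fake foliations and the local product structure (Lemma~\ref{l.fakefoliation}) rather than a classical $\lambda$-lemma argument.
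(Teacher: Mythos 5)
Your structure is right: you split the claim into (i) at most one eigenvalue of $Df|_{F^{cu}_{\sigma_j}}$ has modulus $\le 1$, which you derive cleanly from sectional expansion applied to a $\Phi_t$-invariant $2$-plane in $F^{cu}_{\sigma_j}$, and (ii) at least one eigenvalue has modulus $<1$. You also correctly identify (ii) as the crux and correctly intuit the contradiction: if $F^{cu}_{\sigma_j}$ were wholly expanding then $W^s(\sigma_j)$ would be tangent to the $E^s$-cone, which should conflict with $\sigma_j$ being accumulated by regular orbits of $\tl$. But you leave exactly that step as a conjecture (``one checks this contradicts being accumulated by regular orbits\dots the delicate point''), and your guess that the paper completes it via the fake foliations of Lemma~\ref{l.fakefoliation} is wrong — that is the genuine gap here.

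The paper's argument for (ii) is short and does not touch fake foliations, linearization, or a $\lambda$-lemma. Assuming $F^{cu}_{\sigma_j}$ is purely expanding, $W^s_{\loc}(\sigma_j)$ is a graph over $E^s_{\sigma_j}$ and hence tangent to the extended, backward-invariant $E^s$-cone. A last-exit-time argument (take $x_n\in\tl$ and $t_n\to\infty$ with $\phi_{t_n}(x_n)\to\sigma_j$; let $y^n=\phi_{t'_n}(x_n)$ be the last entry point into $B_\varepsilon(\sigma_j)$; pass to a subsequential limit $y^0\in\partial B_\varepsilon(\sigma_j)$) produces a regular point $y^0\in\tl\cap W^s(\sigma_j)$. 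Its full orbit lies in $W^s(\sigma_j)$, so the flow segment $l=\{\phi_t(y^0)\}_{|t|\le\delta}$ is a curve tangent to the $E^s$-cone, which $Df^{-1}$ expands uniformly; hence $\length(f^{-n}l)/\length(l)\to\infty$. On the other hand $f^{-n}l$ is again a flow segment of the same time-length $2\delta$, so
$$
\length(f^{-n}l)\le \frac{\max_{x\in\Lambda}\|X(x)\|}{\min_{y\in l}\|X(y)\|}\,\length(l),
$$
a bound uniform in $n$ — contradiction. So the missing ingredient in your proposal is the elementary observation that a flow segment cannot stretch faster than the ratio of flow speeds, which kills the purely-expanding case without any product structure; the same device reappears in the appendix (Lemmas~\ref{l.centersegment} and~\ref{l.top.contracting}).
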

\begin{proof}
The proof is quite standard. Suppose that for some $\sigma \in \Sing(\phi_t|_U)$, all the eigenvalues of $Df|_{F^{cu}_{\sigma}}$ are positive (recall that we assume all singularities to be hyperbolic), we claim that $\Lambda\cap W^s(\sigma)\setminus\{\sigma\} \ne\emptyset$.

To prove this claim, we take $x_n \in \tl$ and $t_n\to\infty$, such that $\phi_{t_n}(x_n)\to\sigma$ as $n\to\infty.$ Fix some $\varepsilon>0$ small enough so that there is no singularity in $B_\varepsilon(\sigma)$ other than $\sigma$, and let $y^n = \phi_{t_n'}(x_n)$ where $t_n'<t_n$  is the last time the orbit of $x_n$ enters $B_\varepsilon(\sigma)$. Taking subsequence if necessary, we may assume that $y^n \to y^0\in\tl \setminus \Sing(X)$. Since $\phi_t(y^0)\in B_\varepsilon(\sigma)$ for all $t>0$, this shows that $y^0\in W^s(\sigma)\setminus \{\sigma\}$.

By the invariance of $W^s(\sigma)$, $\phi_t(y^0) \in W^s(\sigma)$ for all $t\in \RR$; in particular, $X(y^0)$ is contained in the $E^s$ cone. Fix some $\delta>0$ small enough and consider the orbit segment $l=\{\phi_t(y^0)\}_{t\in[-\delta,\delta]}$, it follows that $l$ is tangent to the $E^s$ cone. Since $f^{-1}$ expands vectors in $E^s$ cone, we have $\length(f^{-n}(l))/\length(l) \to \infty$, which contradicts the a priori estimate:
$$
\length(f^{-n}(l)) \le \frac{\max\{\|X(x)\|:x\in\Lambda\}}{\min\{\|X(y)\|:y\in l\}}\length(l),
$$
which is bounded.
\end{proof}

\begin{remark}\label{r.fakefoliation}
In the previous section we defined the fake foliations $\cF^s, \cF^{cu}$ using the dominated splitting $E^s\oplus F^{cu}$ on $\tl$. These two foliations are defined around the $r_0$ ball at every $x\in \tl$, in particular, around singularities. On the other hand, the hyperbolic splitting $E^s\oplus E^c\oplus E^u$ (extended to a small neighborhood of singularities) gives fake foliations $\cF_S^i$, $i=s,c,u,cs,cu$, which only exists near a neighborhood of singularities. We take $\delta_0>0$ small enough such that for every $\sigma\in\Sing(\phi_t|_\Lambda)$,
\begin{itemize}
\item $\sigma$ is the only singularity within $B_{\delta_0}(\sigma)$.
\item $\delta_0 \ll r_0/2$, such that both the fake foliations $\cF^s,\cF^{cu}$ and the (also fake) foliations $\cF_S^i$, $i=s,c,u,cs,cu$ are well-defined within $B_{\delta_0}(\sigma)$.
\end{itemize}
The reason that we still need fake foliations $\cF^{cu}$ for points in $B_{\delta_0}(\sigma)$ is due to Lemma~\ref{l.bowenball}.
\end{remark}

\begin{proof}[Proof of Lemma~\ref{l.expansion.near}]
First let us quickly explain the structure of the proof. The fake foliations $\cF^*$ and $\cF_S^*$ give a local product structure within  $B_{\delta_0}(\sigma)$, which allows us to consider the $u$ and $c$ distance between two points. If we take such points close to $W^u(\sigma)$ (that is, their orbits are about to leave $J_{\delta_1}(\sigma)$), then the $u$-distance will be contracted by $\phi_{-t}$ exponentially fast, as the $E^u(\sigma)$ bundle is uniformly expanding. On the other hand, the $c$-distance seems to be expanding under backward iteration, due to the $E^c(\sigma)$ bundle having negative exponent. However, when projected to the normal plane $N_x$, the $c$-segment will be contracting under $\phi_t$ because of the sectional hyperbolicity.  This shows that the distance on the normal plane is contracted under backward iteration. See Figure \ref{f.2}, and note that the entire figure is contained in $B_{\delta_0}(\sigma)$ which is a much bigger ball.

\begin{figure}[h]
	\centering
    \includegraphics[scale=0.4]{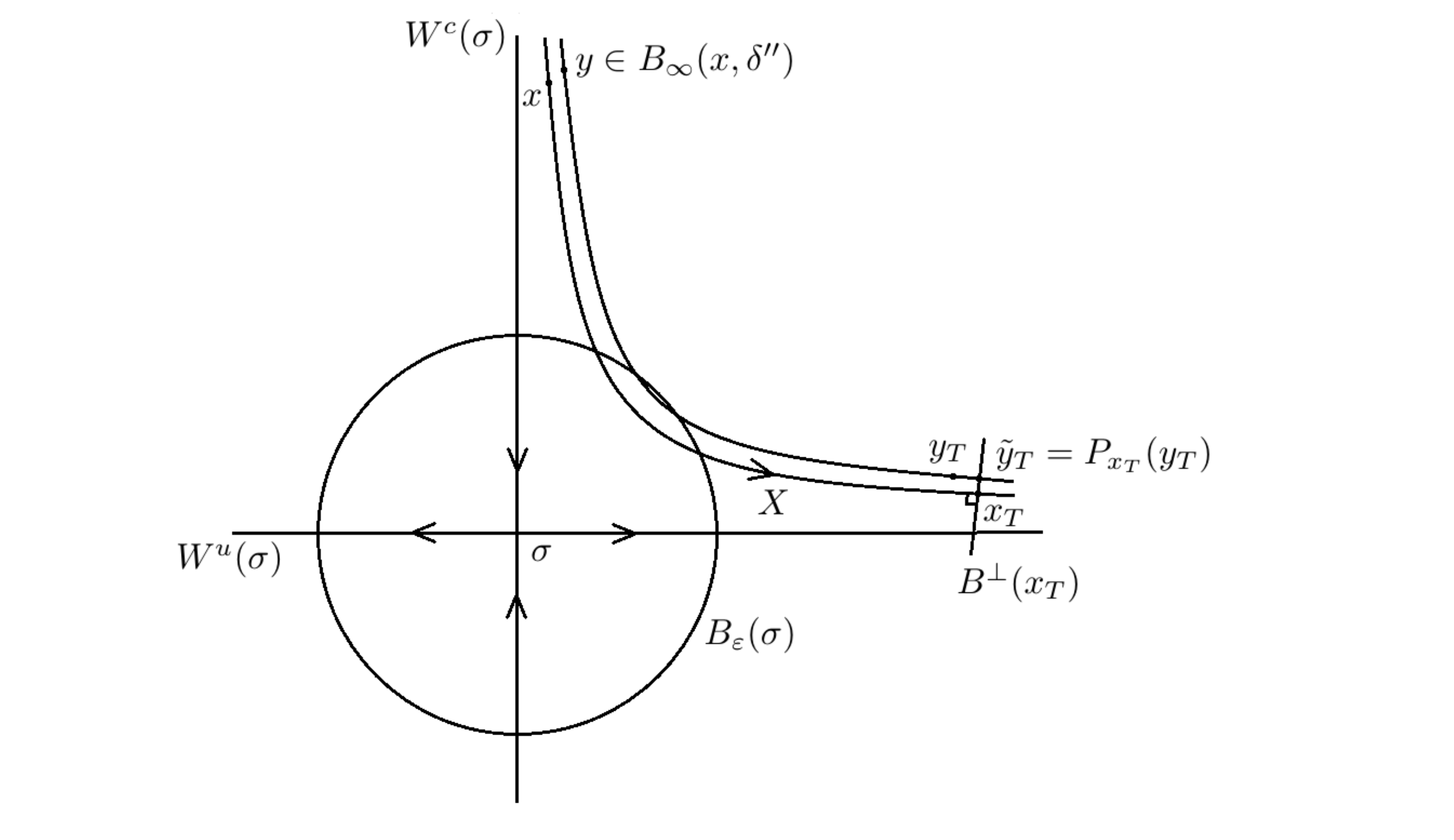}
    \caption{The orbit segment $\{x_t\}_{t\in [0,T]}$.}
    \label{f.2}
\end{figure}
From now on, without loss of generality we will assume that $T$ is an integer. 

To prove expansion for $y\in B_\infty(x,\delta'') \subset \cF^{cu}_x(x)$, one of the main difficulties is that, although we have the foliation $\cF^c_S$ within $B_{\delta_0}(\sigma)$, $\cF^c_S$ may not sub-foliate $\cF^{cu}$ since $\cF^c_S$ and $\cF^{cu}$ are given by different dominated splittings (and extended to neighborhoods in different ways). To solve this issue, we blend these two families of fake foliations and define
$$
\bcf^c_x(z) = \cF^{cs}_S(z)\cap\cF^{cu}_x(x), \mbox{ for every } z \in \cF^{cu}_x(x).
$$
$\bcf^c$ is a new $1$-dimensional center fake foliation, which is locally invariant for $f$ since both $\cF^{cs}_S(z)$ and $\cF^{cu}_x(x)$ are invariant, and sub-foliates $\cF^{cu}$.

Next we construct a local product structure around $x_n = f^n(x)$ in the following way. We take $\cD^u$ with $x\in \cD^u \subset \cF^{cu}$ a disk with dimension $\dim E^u_\sigma$, and tangent to the $E^u$ cone. For positive integers $n\le T$, we denote by 
$$
\cD^u_0 = \cD^u \mbox{, and } \cD^u_n \mbox{ the component of } f^n(\cD^u) \cap \cF^{cu}_{x_n}(x_n) \mbox{ containing }x.
$$
Then each $\cD^u_n$ is still tangent to the $E^u$ cone. For each $z\in \cF^{cu}_{x_n}(x_n)$, there is a unique transverse intersection between $\cD^u_n$ and $\bcf^c(z)$, which we write 
$$
[z,x_n] = \bcf^c(z)\pitchfork\cD^u_n.
$$
Note that this local product structure is preserved by $f$, due to the invariance of $\bcf^c$ and the definition of $\cD^u_n$. To be more precise, for each $y\in B_\infty(x,r_0)$ we have 
$$
f^n([y,x]) = [y_n,x_n],\mbox{ for }0\le n \le T.
$$

Recall that $P_z$ is the projection along to flow to $B^\perp_{\delta}(z)$, which is well-defined for $\delta$ small enough. Furthermore, we can make $\delta$ small such that for some $r_\delta>0$,  the time for the projection, which we denoted by $t_z$, satisfies $|t_z|_{B_\delta}| \ll r_\delta\ll r_1$ for every $z \in B_{\delta_0}(\sigma)\setminus J_{\delta_1}(\sigma)$.

To simplify notation, for $y\in B_\infty(x,\delta)$ we take $\tilde{y} = y_{t_0}$ for some $|t_0|<r_\delta$, such that 
$f^T(\tilde{y})=\tilde{y}_T  = P_{x_T}(y_T)$. As a result of Corollary~\ref{c.saturation}, we have 
$$
\tilde{y} \in \cF^{cu}_x(x) \cap B_\infty(x,\delta+r_\delta D_0),
$$ 
where $D_0 = \max\|X\|$ as before.

\begin{figure}[h]
    \centering
    \includegraphics[scale=0.4]{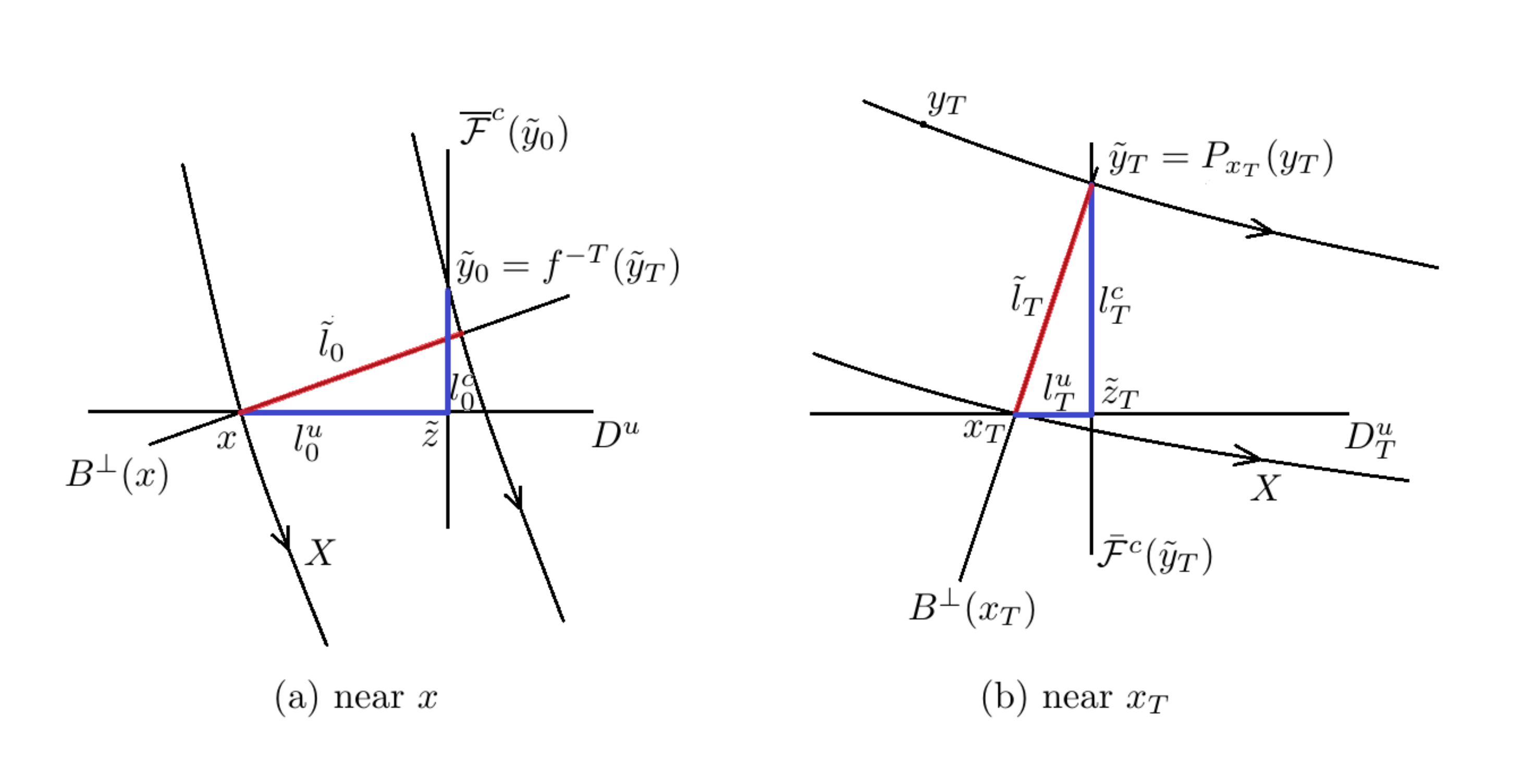}
	\caption{$l^u, l^c$ and $\tilde{l}$.}
	\label{f.3}
\end{figure}
Let $\tilde{z} = [\tilde{y},x]$, then the invariance of $[\cdot,\cdot]$ gives $\tilde{z}_n = f^n(\tilde{z}) = [\tilde{y}_n,x_n]$ for all $n\le T$. By the local product structure,  we take:
\begin{itemize}
\item $l^c_T \subset \bcf^c(\tilde{y}_T) $ joining $\tilde{y}_T$ and $\tilde{z}_T$;
\item $l^u_T \subset \cD^u_T$ the shortest curve (in submanifold metric) connecting $\tilde{z}_T$ and $x_T$.
\end{itemize}
Then $l_T = l^c_T\cup l^u_T$ is a piecewise smooth curve connecting $x_T$ and $\tilde{y}_T$.
Now using the invariance of $\bcf^c$ and the definition of $\cD^u_n$, we can write 
\begin{itemize}
\item $l^c_0 = f^{-T}(l^c_T )\subset \bcf^c(\tilde{y}) $ a curve joining $\tilde{y}$ and $\tilde{z}$;
\item $l^u_0 = f^{-T}(l^u_T) \subset \cD^u_0$ a curve  connecting $\tilde{z}$ and $x$.
\end{itemize}
and $l_0 = l^c_0\cup l^u_0$ is a piecewise smooth curve connecting $x$ and $\tilde{y}$. See Figure~\ref{f.3}.

By the (uniform) transversality between $\bcf^c(\tilde{y}_n)$ and $\cD^u_n$ and the uniform contracting of $f^{-1}$ within $E^u$ cone, there exists $C>1$ and $\lambda_1>1$ such that
\begin{equation}\label{e.length.T}
\max\{\length(l^u_T),\length(l^c_T) \} \le Cd(x_T,\tilde{y}_T),
\end{equation}
and 
\begin{equation}\label{e.length}
\length(l^c_0)\le C(\delta+r_\delta D_0), \mbox{ and } \length(l^u_0)\le \lambda_1^{-T}\length(l^u_T).
\end{equation}

Now we shrink $\delta$ one last time, denote by $\delta''$, such that:
\begin{enumerate}
\item $P_x(l_0) = \tilde{l}_0$ is well-defined, with $|t_x|_{l_0}|\le t_0\ll r_1$, where $t_0$ is small enough such that $\frac{1}{C}\le |\Jac(\Phi_t|_{\Sigma})|\le C$ for $0\le t<t_0$ and any two-dimensional subspace $\Sigma\subset T_xM$.
\item $\delta$ satisfies Lemma~\ref{l.comparedistance}:
$$
d(x,y)\le d^*_x(x,y) \le Cd(x,y),
$$
and the same holds for $x_T$ and $y_T$.
\end{enumerate}

Let $\tilde{l}^u_0 = P_x(l^u_0)$ and  $\tilde{l}^c_0 = P_x(l^c_0)$. Then we have 
\begin{equation}\label{e.length.0}
d^*_x(x,P_x(y)) \le Cd(x,P_x(y)) \le C(\length(\tilde{l}^u_0) + \length(\tilde{l}^c_0)).
\end{equation}
It remains to estimate the  length of $\tilde{l}^u_0$ and  $\tilde{l}^c_0$.

Note that $l^u_0$ is tangent to the $E^u$ cone, and thus transverse to the flow direction (which is tangent to the $E^s \oplus E^c$ cone if $\varepsilon$ is small). On the other hand, $\tilde{l}^u_0 $ is perpendicular to the flow directions. Since  both $\tilde{l}^u_0$ and $l^u_0$ are transverse to the flow directions, there is $C_1>0$ such that  
$$
\frac{1}{C_1} \le \frac{\length(\tilde{l}^u_0)}{\length(l^u_0)}\le C_1.
$$
This together with (\ref{e.length}) shows that 
\begin{equation}\label{e.length.u}
\length(\tilde{l}^u_0) \le C_2\lambda_1^{-T}\length(l^u_T).
\end{equation}

Next we estimate $\length(\tilde{l}^c_0)$ using the sectional hyperbolicity.

Let $H$ be the holonomy map from $\tilde{l}^c_T: =P_{x_T}(l^c_T)$ to $\tilde{l}^c_0$ induced by the flow. Similar to the $l_0^u$ case, $\tilde{l}^c_T$ is perpendicular to the flow direction, while $l^c_T$ is tangent to the $E^c$ cone (well-defined in a neighborhood of $\sigma$) but the flow direction is tangent to the $E^u$ cone for $\varepsilon$ small.  This shows that  
$$
\frac{1}{C_1} \le \frac{\length(\tilde{l}^c_T)}{\length(l^c_T)}\le C_1.
$$
Recall that $\cP[u,v]$ is the parallelogram generated by vectors $u$ and $v$.
If we take $v$ a tangent vector of $\tilde{l}^c_T$ at $z\in \tilde{l}^c_T$ and consider $H\cP[v,X(z)] = \cP[DH^{-1}v, X(H^{-1}(z))]$, sectional hyperbolicity of $f$ implies that 
$$
\|DH^{-1}v\|\|X(H^{-1}(z))\| \le C_3\lambda_1^{-T}\|v\|\|X(z)\|,
$$
which shows that
\begin{equation}
\length(\tilde{l}^c_0) \le C_3'\lambda_1^{-T}\frac{\|X(x_T)\|}{\|X(x)\|}\length(l^c_T).
\end{equation}
Combine this with (\ref{e.length.T}), (\ref{e.length.0}) and (\ref{e.length.u}), we have
\begin{align*}
d^*_x(P_x(y)) \le&  C(\length(\tilde{l}^u_0) + \length(\tilde{l}^c_0))\\
\le & C(C_2\lambda_1^{-T}\length(l^u_T)+C_3'\lambda_1^{-T}\frac{\|X(x_T)\|}{\|X(x)\|}\length(l^c_T))\\
\le & C_4\lambda_1^{-T}(\frac{D_0}{\delta_1}+1)d(x_T,\tilde{y}_T)\\
\le & C_4\lambda_1^{-T}(\frac{D_0}{\delta_1}+1) d^*_{x_T}(x_T,P_{x_T}(y_T)).
\end{align*}
Since $T\to\infty$ as $\varepsilon\to 0$, for any given $L>1$, we can take $\varepsilon$ small enough such that $C_4\lambda_1^{-T}(\frac{D_0}{\delta_1}+1) < \frac1L$. This finishes the proof of Lemma~\ref{l.expansion.near}.
\end{proof}

%%%%%%%%%%%%%%%%%%%%%%%%%%%%%%%%%%%%%%%%%%
%%%%%%%%%%%   SUBSECTION: POSITIVE ENTROPY
%%%%%%%%%%%%%%%%%%%%%%%%%%%%%%%%%%%%%%%%%%
\subsection{Positive entropy}\label{ss.positiveentropy} 
In this section we will prove Theorem~\ref{m.positiveentropy}. The proof consists of two steps. First we prove that in every small neighborhood $U$ of $\Lambda$, the topological entropy of $f|_U$ is bounded from below by the volume expanding rate along $F^{cu}$ bundle. Then we take a sequence of such neighborhoods shrinking to $\Lambda$, and use the upper semi-continuity of the metric entropy to obtain an lower bound for $h_{top}(f|_\Lambda)$.

First we introduce the volume expansion rate on a bundle.
\begin{definition}
Let $D$ be a disk tangent to  the $F$ cone. The {\em volume expansion
of $D$}, which we denote by  $v_F(D)$, is defined by
$$
\limsup_n\frac1n\log(\Leb(g^n(D))).
$$
The {\em volume expansion $v_F$ of bundle $F$} is defined by:
$$
v_F = \sup\{v_F(D): D\mbox{ is tangent to the } F \mbox{ cone}\}.
$$
\end{definition}

The positivity for the topological entropy of $f|_U$ relies on the following theorem:
\begin{theorem}\label{t.positiveentropy}
Suppose $g$ is a diffeomorphism which admits a dominated splitting $E \oplus F$. Then $h_{top}(g) \ge v_F$.
\end{theorem}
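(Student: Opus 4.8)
The plan is to show that a single disk $D$ tangent to the $F$-cone with large volume growth already forces many distinguishable orbits, i.e., $h_{top}(g)\ge v_F(D)$, and then take the supremum over such disks. Fix $\varepsilon<\varepsilon_0/\|g\|_{C^1}$ so that Lemmas~\ref{l.boundedvolume}, \ref{l.invariantcone} and \ref{l.boundedvolumebowen} apply. Given $D$ tangent to the $F$-cone, subdivide it (using the last remark before Lemma~\ref{l.boundedvolume}) into finitely many sub-disks tangent to the \emph{local} $F$-cone; since a finite union is harmless for the exponential growth rate, I may assume $D$ itself is tangent to a local $F$-cone. The key point is the standard volume-vs-entropy counting: let $E_n$ be a maximal $(n,\varepsilon)$-separated subset of $D$, so the balls $B_n(y,\varepsilon/2)$, $y\in E_n$, are pairwise disjoint while $\{B_n(y,\varepsilon)\}_{y\in E_n}$ covers $D$.

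First I would get a lower bound on $\#E_n$. Cover $D$ by the Bowen balls $B_n(y,\varepsilon)$ with $y\in E_n$; then $g^n(D)$ is covered by $g^n(D\cap B_n(y,\varepsilon))$, each of which, by Lemma~\ref{l.boundedvolumebowen}, has volume at most $K$. Hence
\begin{equation}\label{e.count}
\Leb(g^n(D)) \le \sum_{y\in E_n}\Leb\big(g^n(D\cap B_n(y,\varepsilon))\big) \le K\cdot\#E_n,
\end{equation}
so $\#E_n \ge \Leb(g^n(D))/K$. Therefore
\begin{equation}\label{e.sep-growth}
\limsup_{n\to\infty}\frac1n\log\#E_n \ge \limsup_{n\to\infty}\frac1n\log\Leb(g^n(D)) = v_F(D).
\end{equation}
Since a maximal $(n,\varepsilon)$-separated set is $(n,\varepsilon)$-spanning, $\#E_n\ge r_n(D,\varepsilon)$ is not what I want — rather, the number of elements of any $(n,\varepsilon/2)$-spanning set of $D$ is at least $\#E_n$ (each Bowen ball $B_n(z,\varepsilon/2)$ of a spanning point contains at most one point of an $(n,\varepsilon)$-separated set). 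Hence $r_n(D,\varepsilon/2)\ge \#E_n$, and since the $(n,\varepsilon/2)$-spanning number of $D\subset M$ is at most that of $M$, we get $r_n(M,\varepsilon/2)\ge \#E_n$. Combining with \eqref{e.sep-growth},
\begin{equation*}
r(M,\varepsilon/2) = \limsup_n\frac1n\log r_n(M,\varepsilon/2) \ge v_F(D),
\end{equation*}
and letting $\varepsilon\to0$ gives $h_{top}(g)\ge v_F(D)$. Taking the supremum over all disks $D$ tangent to the $F$-cone yields $h_{top}(g)\ge v_F$.

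The only genuinely delicate point is the chain of inequalities relating separated sets of the subset $D$, spanning sets of $D$, and spanning sets of $M$ — one must be careful that entropy computed on the non-invariant set $D$ bounds $h_{top}(g,M)$ from below, which is exactly why I phrase everything in terms of $(n,\varepsilon)$-spanning numbers $r_n(M,\cdot)$ of the whole space and use that any spanning set of $M$ restricts to (covers) $D$. A minor technical nuisance is the passage from "tangent to the $F$-cone" to "tangent to a local $F$-cone"; this is handled by the finite subdivision remark, noting $\limsup_n\frac1n\log(\sum_i a_i^{(n)}) = \max_i \limsup_n \frac1n\log a_i^{(n)}$ for finitely many nonnegative sequences, so some sub-disk realizes the growth rate $v_F(D)$. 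Everything else is a routine application of Lemma~\ref{l.boundedvolumebowen}.
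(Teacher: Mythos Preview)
Your proof is correct and follows the same approach as the paper: cover $D$ by Bowen balls and apply Lemma~\ref{l.boundedvolumebowen} to bound the volume of each image piece by $K$, yielding the required cardinality lower bound. The paper works directly with a minimal $(n,\varepsilon)$-spanning set of $D$ (so your separated-set detour and the passage to $\varepsilon/2$ are unnecessary) and glosses over the local-cone subdivision that you handle explicitly, but otherwise the arguments are identical.
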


Theorem~\ref{t.positiveentropy} was first stated in~\cite{LVY}, where the bundle $F$ is required to be  uniformly expanding. The main reason is that, in the proof of \cite[Proposition 2]{LVY}, they need $f^n(D \cap B_n(x, \varepsilon))$ to be `almost' a ball, which occurs only when the bundle $F$ is uniformly expanding. In general, this set may not even be connected.
\begin{proof}[Proof of Theorem~\ref{t.positiveentropy}]
For any $\delta > 0$, let $D$ be a disk which is tangent to the $F$ cone and satisfies 
$v_F (D) \ge v_F - \delta$, that is:
$$
\limsup_n\frac1n\log(\Leb(g^n(D))) \ge v_F - \delta.
$$
Take $\varepsilon > 0$ and 
$\Gamma_n = \{x_1, \ldots , x_{r_n(D,\varepsilon)}\}$
an $(n, \varepsilon)$-spanning set of $D$. 
By Lemma~\ref{l.boundedvolumebowen},
$$ 
\Leb(g^n(D \cap B_n(x, \varepsilon))) \le K.
$$
Since that $\{B_n(x, \varepsilon)\}_{x\in\Gamma_n}$ is a cover of $D$, we get
$$
\Leb(f^n(D)) \le \sum_{i=1}^{r_n(D,\varepsilon)}\Leb(f^n(D \cap B_n(x_i, \varepsilon))) \le Kr_n(D, \varepsilon).
$$
Taking logarithm and divide by $n$, we obtain

\begin{align*}
\limsup_n\frac1n\log(r_n(D, \varepsilon)) &\ge \limsup\frac1n\log(\Leb(f^n(D))/K)\\&= \limsup \frac1n\log(\Leb f^n(D)) \\&\ge v_F - \delta,
\end{align*}
which implies that $r(D,\varepsilon) \ge v_F - \delta$. Since $\delta>0$ is arbitrary, we
conclude the proof of Theorem~\ref{t.positiveentropy}.
\end{proof}
\begin{remark}
Similar to Lemmas~\ref{l.boundedvolume}, \ref{l.invariantcone} and \ref{l.boundedvolumebowen}, if $g$ has a dominated splitting $E\oplus F$ on an invariant set $\Lambda$, one can extend the invariant cone field $C_a(F)$ to a  neighborhood of $\Lambda$, such that the extension is still forward invariant. It is easy to check that the proof of Theorem~\ref{t.positiveentropy} applies to points in the neighborhood.
\end{remark}

Now we are ready to show that every Lorenz-like class has positive topological entropy.
\begin{proof}[Proof of Theorem~\ref{m.positiveentropy}]
Recall that $f$ is the time-one map of a $C^1$ flow $\phi_t$, and $\Lambda$ is a compact invariant set of $\phi_t$ that is sectional hyperbolic.  We take a forward invariant $F^{cu}$ cone on $\Lambda$ along the bundle $F^{cu}$ which is assumed to be sectional-expanding.

Let $\{U_i\}^\infty_{i=1}$ be a sequence of neighborhoods of $\Lambda$ as in Definition~\ref{d.lorenzclass}, i.e., they
satisfy:
\begin{itemize}
\item $U_1 \supset U_2\supset\cdots$ and $\bigcap_iU_i = \Lambda$;
\item for each $i \ge 1$, $\phi_t(U_{i+1}) \subset U_i$ for any $t \ge 0$.
\end{itemize}
Assume $U_1$ is taken small enough, such that the $F^{cu}$ cone can be extended to $U_1$ (thus to $U_i$ for every $i\ge 1$) and is still forward invariant. Then for any disk $D \subset U_1 $which is tangent to the $F$ cone, $f^n(D)$ is tangent to the $F$ cone. 

Since $F^{cu}|_\Lambda$ is volume expanding (which also holds in $U$), there exist $C > 0$ and $ \lambda > 0$ such that
$$
\Leb(f^n(D)) \ge Ce^{\lambda n}.
$$
This implies $v_F(D) \ge \lambda$. Theorem~\ref{t.positiveentropy} applied to $f|_{U_i}$ yields
$$
h_{top}(f|_{U_i}) \ge \lambda.
$$
By the variational principle, there are $f$ ergodic invariant measures $\mu_i$ supported within $U_i$, with 
$$
h_{\mu_i} \ge \lambda -\frac1i.
$$
Passing to a subsequence, we get $\mu_i \to \mu_0$ in the weak-* topology, where $\mu_0$ must be supported on $\Lambda$. By Theorem~\ref{m.h-expansive}, $f|_{U_i}$ is entropy expansive, thus the metric entropy is upper semi-continuous. This shows that 
$$
h_{\mu_0}\ge \lambda>0.
$$
We conclude that
$$
h_{top}(\phi_t|_\Lambda) = h_{top}(f|_\Lambda) \ge h_{\mu_0} \ge \lambda>0,
$$
the proof is complete.
\end{proof}

%%%%%%%%%%%%%%%%%%%%%%%%%%%%%%%%%%%%%%%%%%
%%%%%%%%%%%   SECTION: CONTINUITY
%%%%%%%%%%%%%%%%%%%%%%%%%%%%%%%%%%%%%%%%%%
\section{Continuity of topological entropy}\label{s.continuity}
In this section we will prove Theorem~\ref{m.continuity} by showing that the support of every hyperbolic measure can be approximated by horseshoes with large entropy.

Let $\Lambda$ be a sectional hyperbolic compact invariant set for a $C^1$ flow $\phi_t$, and $U$ be a neighborhood of $\Lambda$. Denote by $\tl_n$ and $\tl$ the maximal invariant set of $\phi^n_t$ and $\phi_t$ in $U$, respectively. 
If $h_{top}(\phi_t|_{\tl}) = 0$ (recall that in order to get positive entropy, we need $\Lambda$ to be Lyapunov stable), then $\lim_{n\to\infty}h_{top}(\phi^n_t|_{\tl_n}) \ge 0 = h_{top}(\phi_t|_{\tl})$ holds trivially. Therefore, we can assume that $$h_{top}(\phi_t|_{\tl}) >0. $$

Note that by Theorem~\ref{m.h-expansive}, there is a neighborhood $U$ of $\Lambda$, such that $\phi_t$ is entropy expansive in $U$. If we denote by $f = \phi_1$ the time-one map of $\phi_t$ as before, then $f$ is entropy expansive  and satisfies $h_{top}(f|_{\tl})>0$. We can apply Lemma~\ref{l.variationalprinciple} to $\phi_t$ and obtain a measure of maximal entropy, which we denote by $\mu_{\phi_t}$. Since $\supp\mu_{\phi_t} \in U$, it follows that $\phi_t$ on $\supp\mu_{\phi_t}$ is sectional hyperbolic. In particular, $\mu_{\phi_t}$ is a hyperbolic measure.

The next theorem shows that every hyperbolic measure $\mu$ of $\phi_t$ with positive entropy can be approximated by horseshoes with entropy close to  $h_{top}(\phi_t|_{\supp\mu})$.

\begin{theorem}\label{t.horseshoe}
Let $\mu$ be a hyperbolic, ergodic measure of $C^1$ flow $\phi_t$ with positive entropy. Assume that there is a dominated splitting $E\oplus F$ on $\supp \mu$, such that $\dim E$ is the (stable) index of $\mu$. Then for every $\varepsilon>0$, there is a hyperbolic set $\Lambda_\varepsilon$ in a small neighborhood of $\supp\mu$, uniformly away from singularities and containing some periodic orbit, with 
$h_{top}(\phi_t|_{\Lambda_\varepsilon}) > h_{top}(\phi_t|_{\supp\mu}) -\varepsilon$.
\end{theorem}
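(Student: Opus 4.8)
The plan is to adapt Katok's classical horseshoe construction for $C^{1+\alpha}$ diffeomorphisms to the present $C^1$ setting, using entropy expansiveness as a substitute for the usual hyperbolicity-from-exponents machinery, and then to suspend the resulting symbolic horseshoe back to the flow while keeping it uniformly away from singularities. First I would pass to the time-one map $f = \phi_1$ and, via Lemma~\ref{l.componententropy}, replace $\mu$ by an ergodic component $\tilde\mu$ of it with $h_{\tilde\mu}(f) = h_\mu(\phi_t) > 0$; since $\mu$ is hyperbolic for the flow, $\tilde\mu$ has exactly one zero exponent (the flow direction) and the dominated splitting $E\oplus F$ on $\supp\mu$ refines, on a full $\tilde\mu$-measure set, into the Oseledets stable/center-unstable decomposition with $\dim E$ equal to the index. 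Because $\mu$ is non-trivial, $\supp\mu$ is a positive distance from $\Sing(X)$, so along a Pliss-type subset of orbits the flow speed is bounded below and one has genuine uniform hyperbolic estimates in a neighborhood.

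The core construction proceeds as follows. Fix $\varepsilon > 0$. Using the variational principle together with entropy expansiveness (Theorem~\ref{m.h-expansive}, in the form of Lemma~\ref{l.variationalprinciple}) one may assume $h_{top}(\phi_t|_{\supp\mu})$ is nearly realized by $h_{\tilde\mu}(f)$. Then I would invoke a $C^1$ Katok-type theorem: for the ergodic hyperbolic measure $\tilde\mu$ of the diffeomorphism $f$, and for any $\eta>0$, there is a compact, $f$-invariant, uniformly hyperbolic set $K_\eta$ contained in an arbitrarily small neighborhood of $\supp\tilde\mu$, with $h_{top}(f|_{K_\eta}) > h_{\tilde\mu}(f) - \eta$; moreover $K_\eta$ carries a horseshoe, hence periodic orbits. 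The $C^1$ version relies on the fact that uniform (rather than non-uniform) hyperbolicity along finitely many Bowen-separated orbit segments suffices, which is exactly what is available here: the domination $E\oplus F$ plus sectional expansion on $F^{cu}$, plus the flow speed being bounded below near $\supp\mu$, give uniform cone-field invariance and uniform expansion/contraction estimates on the relevant pieces (this is where the shadowing Lemma of Liao~\cite{Liao} and the bounded-volume estimates of Lemmas~\ref{l.boundedvolume}--\ref{l.boundedvolumebowen} enter to close pseudo-orbits into genuine orbits without passing near singularities). Since $K_\eta$ lies close to $\supp\mu$, which is uniformly away from $\Sing(X)$, we may shrink the neighborhood so that $K_\eta$ is uniformly away from singularities as well.

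Finally I would pass back from $f$ to the flow. Let $\Lambda_\varepsilon$ be the $\phi_t$-saturation of $K_\eta$, i.e.\ $\Lambda_\varepsilon = \bigcup_{t\in[0,1]}\phi_t(K_\eta)$; since $K_\eta$ is away from singularities the flow speed is bounded below on $\Lambda_\varepsilon$, so this is a compact hyperbolic set for $\phi_t$ (the normal-bundle hyperbolicity of $\psi_t$ is inherited from the $Df$-hyperbolicity of $K_\eta$, via the relation between Oseledets splittings for $f$ and for $\psi_t$ recorded in the excerpt). It still contains a periodic orbit of the flow, coming from the periodic orbit of $f|_{K_\eta}$. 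For the entropy: by the definition in the excerpt the topological entropy of a flow is that of its time-one map, so $h_{top}(\phi_t|_{\Lambda_\varepsilon}) = h_{top}(f|_{\Lambda_\varepsilon}) \ge h_{top}(f|_{K_\eta}) > h_{\tilde\mu}(f) - \eta = h_\mu(\phi_t) - \eta$, and since $\mu$ is a measure of nearly maximal entropy (one may take $\mu = \mu_{\phi_t}$ or approximate), choosing $\eta$ appropriately gives $h_{top}(\phi_t|_{\Lambda_\varepsilon}) > h_{top}(\phi_t|_{\supp\mu}) - \varepsilon$.

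\emph{Main obstacle.} The delicate point is proving the $C^1$ Katok horseshoe theorem for the hyperbolic measure $\tilde\mu$ without $C^{1+\alpha}$ regularity: in the classical argument one uses local stable/unstable manifolds of uniform size (Pesin theory) to build the Markov rectangles, and these are not available in class $C^1$. The plan to get around this is to work entirely with the fake foliations and cone fields of Section~\ref{s.preliminary} and with Liao's shadowing lemma adapted to allow passages near (but, in our application, actually avoiding) singularities: one selects, by Pliss's lemma and the Katok density of hyperbolic times, a large $(n,\varepsilon)$-separated set of orbit segments all of whose relevant returns are hyperbolic with uniform rates, closes them into true periodic orbits by shadowing, and checks that the resulting finite set of periodic orbits generates a uniformly hyperbolic horseshoe with entropy at least $h_{\tilde\mu}(f) - \eta$ via a standard counting/coding argument. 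Verifying that the shadowing can be performed with uniform constants — i.e.\ that the loss of hyperbolicity strength under the $C^1$ closing lemma is controlled — is the step that requires the most care, and it is precisely here that staying uniformly away from $\Sing(X)$ (so that flow speed, and hence the Poincaré return maps, are uniformly controlled) is essential.
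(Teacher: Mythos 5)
Your proposal contains a fatal misconception right at the start: you claim that because $\mu$ is non-trivial, $\supp\mu$ is a positive distance from $\Sing(X)$. This is false. Non-triviality means $\mu(\Sing(X))=0$, but the support can and typically does accumulate on singularities (the geometric Lorenz attractor's SRB measure is the standard example). The whole difficulty of Theorem~\ref{t.horseshoe} is precisely that orbits of $\tilde\mu$-generic points pass arbitrarily close to $\Sing(X)$, so the flow speed degenerates, Poincar\'e return times are unbounded, and the linear Poincar\'e flow $\psi_t$ has no uniform bound on a fixed time window. Your parenthetical ``but, in our application, actually avoiding'' singularities shows the construction you have in mind simply doesn't meet the actual hypotheses. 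What the paper does is choose only a positive-$\tilde\mu$-measure compact set $\Lambda_0\subset\supp\mu\setminus\Sing(X)$ (Lemma~\ref{l.C1pesin}) and accept that the orbit segments between returns to $\Lambda_0$ do go near singularities; Liao's shadowing lemma (Lemma~\ref{l.shadowing}) is formulated for exactly this situation.

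A second, independent obstruction is that you propose to invoke a ``$C^1$ Katok-type theorem'' for the diffeomorphism $f=\phi_1$ and the ``hyperbolic measure $\tilde\mu$.'' But as you yourself note, $\tilde\mu$ has a zero Lyapunov exponent (the flow direction), so it is \emph{not} a hyperbolic measure for $f$, and no horseshoe theorem for diffeomorphisms applies. Indeed any $f$-invariant compact $K_\eta$ disjoint from $\Sing(X)$ carries the nonvanishing $Df$-invariant line field $X$ with sub-exponential growth, hence $K_\eta$ cannot be a uniformly hyperbolic set for $f$; the object you call ``$Df$-hyperbolicity of $K_\eta$'' does not exist, so the saturation step has nothing to inherit from. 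The paper avoids this by working throughout with the \emph{scaled} linear Poincar\'e flow $\psi_t^*$ (Lemmas~\ref{l.scaledflow}--\ref{l.domination.scaled}), which quotients out the flow direction and renormalizes by $\|X(\phi_t(x))\|/\|X(x)\|$ so that it remains a bounded cocycle even near singularities; $(\eta,T_0)^*$-quasi-hyperbolicity is a property of $\psi_t^*$, not of $Df$.

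Finally, even if both issues above were fixed, your sketch of ``closing pseudo-orbits by shadowing'' then ``counting'' glosses over the point the paper emphasizes explicitly: the classical shadowing lemma for flows only gives closeness up to an unknown time reparametrization $\theta$, which breaks Katok's separation-counting argument (two separated orbit segments may be shadowed by the same periodic orbit with different time offsets). The paper resolves this via item (d) of Lemma~\ref{l.shadowing}, which gives $d(\phi_t(x),\phi_{\theta(t)}(p))\le L\,d(x,\phi_T(x))$ with $L$ independent of $T$, and then bounds the number of points of $K_n$ that can share a shadowing periodic orbit by $2lDn(1+\varepsilon)(1+\alpha)$. Without such a quantitative control on $\theta$ the entropy estimate does not close. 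Your proposal, as written, cannot be repaired by local patches; you would need to replace the ``$C^1$ Katok theorem for $f$'' with the $\psi_t^*$-based quasi-hyperbolicity, allow passages near singularities, and carry out the counting using the strong form of Liao's shadowing.
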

These type of approximation results are well known in the $C^{1+}$ context, see for example~\cite{LS}.

For $\varepsilon$ small enough, the hyperbolic set  given by the above theorem must be contained in $U$ and outside a neighborhood (with uniform size) of $\Sing(X)$. Since the topological entropy of a hyperbolic set varies continuously, it follows that $h_{top}(\cdot|_{\tl})$ is lower semi-continuous at $\phi_t$. The upper semi-continuity follows from Theorem~\ref{m.h-expansive} and Lemma~\ref{l.uppersemicontinuity}.
When $\Lambda$ is a chain recurrent class, Lemma~\ref{l.shadowing}(f) below shows that $\Lambda$ contains a periodic orbit.
This concludes the proof of Theorem~\ref{m.continuity}, leaving only the proof of Theorem~\ref{t.horseshoe}. 

The proof of Theorem~\ref{t.horseshoe} uses a similar argument of Katok in~\cite{K80} for diffeomorphisms. Note however, that the original argument of~\cite{K80} cannot be directly applied to flows even if the flow is uniformly hyperbolic without singularities. The main obstruction is due to the shadowing lemma for flows only allows one to compare the pseudo-orbit and the shadowing orbit up to a change of time. We overcome this issue using a shadowing lemma by Liao~\cite{Liao}. See Lemma~\ref{l.shadowing} below, in particular item (d).

We organize this section in the following way. In~\ref{ss.pesin} we establish the scaled linear Poincar\'e flow and its expanding property. In~\ref{ss.shadowing} we will introduce Liao's shadowing lemma, which allows us to shadow pseudo-orbits that pass through neighborhoods of singularities and estimates the time difference between the pseudo-orbit and the shadowing orbit. Finally, we will prove Theorem~\ref{t.horseshoe} in Section~\ref{ss.shadowing}.

\subsection{Scaled linear Poincar\'e flows}\label{ss.pesin}
Starting from now, $\mu$ will be a non-trivial hyperbolic ergodic measure with a dominated splitting $E\oplus F$ on  $\supp\mu$, such that $\dim E$ is the index of $\mu$. 

Recall that for a regular point $x$ and $v\in T_xM$, the linear Poincar\'e flow $\psi_t: N_x\to N_{\phi_t(x)}$ is the projection of $\Phi_t(v)$ to $N_{\phi_t(x)}$, where $N_x$ is the orthogonal complement of $X(x)$. The {\em scaled linear Poincar\'e flow}, which we denote by $\psi^*_t$, is defined as
\begin{equation}
\psi^*_t(v) = \frac{\|X(x)\|}{\|X(\phi_t(x))\|}\psi_t(v) = \frac{\psi_t(v)}{\|\Phi_t|_{<X(x)>}\|}.
\end{equation}  

\begin{lemma}\label{l.scaledflow}
$\psi^*_t$ is a bounded cocycle over $N_\Lambda$ in the following sense: for any $\tau>0$, there is $C_\tau>0$ such that for any $t\in[-\tau,\tau]$,
$$
\|\psi^*_t\|\le C_\tau.
$$
Furthermore, for every non-trivial ergodic measure $\mu$, the cocycles $\psi_t$ and $\psi^*_t$ have the same Lyapunov exponents and Oseledets splitting.
\end{lemma}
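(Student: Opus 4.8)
The plan is to treat the two assertions separately. For the boundedness, I would first recall that $\psi_t = \pi_{\phi_t(x)}\circ\Phi_t$ is the composition of the tangent flow with an orthogonal projection, so $\|\psi_t(v)\|\le\|\Phi_t(v)\|\le\|\Phi_t\|\,\|v\|$. Since $X$ is $C^1$ on a compact manifold, $\|\Phi_t\|$ and $\|\Phi_t^{-1}\|$ are bounded on any time window $[-\tau,\tau]$, say by some $M_\tau$; in particular $\|\Phi_t|_{\langle X(x)\rangle}\| = \|X(\phi_t(x))\|/\|X(x)\|$ lies in $[M_\tau^{-1},M_\tau]$ for $|t|\le\tau$ (here one uses that $x$ is a regular point, so $X(x)\ne 0$, and that $\langle X(x)\rangle$ is $\Phi_t$-invariant). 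Combining, $\|\psi^*_t(v)\| = \|X(x)\|\,\|X(\phi_t(x))\|^{-1}\|\psi_t(v)\| \le M_\tau\cdot\|\psi_t\|\cdot\|v\|\le M_\tau^2\|v\|$, so one may take $C_\tau = M_\tau^2$. The same estimate applied to $\psi^*_{-t}$ gives the two-sided bound, which is what ``bounded cocycle'' means here. This part is routine and I do not expect any obstacle.

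For the second assertion, the point is that $\psi^*_t$ differs from $\psi_t$ only by the scalar multiplier $a_t(x) := \|X(x)\|/\|X(\phi_t(x))\| = \|\Phi_t|_{\langle X(x)\rangle}\|^{-1}$, which is itself a (one-dimensional, positive) multiplicative cocycle over the flow. The key observation is that $a_t$ is \emph{tempered}: for a non-trivial ergodic measure $\mu$ one has $\mu(\Sing(X))=0$, so $\log\|X\|$ is finite $\mu$-a.e., and moreover $\log\|X\|\in L^1(\mu)$ since $\|X\|$ is bounded above (hence $\log\|X\|$ is bounded above) and $\log\|X(\phi_1(x))\| - \log\|X(x)\|$ is integrable because $\|\Phi_1\|^{\pm1}$ is bounded. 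Consequently
\[
\lim_{t\to\pm\infty}\frac1t\log a_t(x) = -\lim_{t\to\pm\infty}\frac1t\bigl(\log\|X(\phi_t(x))\| - \log\|X(x)\|\bigr) = 0
\]
for $\mu$-a.e.\ $x$, by the Birkhoff ergodic theorem applied to the coboundary $\log\|X(\phi_1(x))\|-\log\|X(x)\|$ (the time-average of a coboundary is zero a.e.). Therefore, for any $v\in N_x$,
\[
\lim_{t\to\pm\infty}\frac1t\log\|\psi^*_t(v)\| = \lim_{t\to\pm\infty}\frac1t\log a_t(x) + \lim_{t\to\pm\infty}\frac1t\log\|\psi_t(v)\| = \lim_{t\to\pm\infty}\frac1t\log\|\psi_t(v)\|
\]
whenever the right-hand limit exists. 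Since the Oseledets decomposition is characterized precisely by these exponential growth rates (and multiplying a cocycle by a tempered scalar preserves the filtration and the splitting), $\psi_t$ and $\psi^*_t$ share the same Lyapunov exponents and the same Oseledets splitting on the set of Oseledets-regular points, which has full $\mu$-measure.

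The only slightly delicate point is the integrability of $\log\|X\|$ near singularities, which is what forces the hypothesis that $\mu$ is \emph{non-trivial}; once $\mu(\Sing(X))=0$ is in hand, the boundedness of $\|\Phi_1\|^{\pm1}$ makes $\log\|X(\phi_1(x))\|-\log\|X(x)\|$ globally bounded, so integrability is automatic and the coboundary argument goes through without subtlety. I would present the boundedness statement first (it does not even need $\mu$), then the coboundary/tempering argument for the exponents, citing the Oseledets theorem stated earlier in the excerpt for the existence of the splitting of $\psi_t$.
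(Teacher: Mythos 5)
Your boundedness argument is exactly the paper's: bound $\|\psi_t\|$ from above by $\|\Phi_t\|$ and bound the scalar $\|\Phi_t|_{\langle X(x)\rangle}\|$ from below by $\|\Phi_t^{-1}\|^{-1}$, both uniformly for $t\in[-\tau,\tau]$. For the exponents you use the same basic mechanism as the paper — the factor $a_t(x)=\|\Phi_t|_{\langle X(x)\rangle}\|^{-1}$ relating $\psi_t$ and $\psi^*_t$ has zero asymptotic growth rate — but your justification has a gap. You assert that $\log\|X\|\in L^1(\mu)$ follows ``automatically'' from $\log\|X\|$ being bounded above and the coboundary increment $g(x):=\log\|X(\phi_1(x))\|-\log\|X(x)\|$ being bounded. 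That implication is false: boundedness of $h:=\log\|X\|$ from above together with boundedness of $h\circ\phi_1-h$ does not force $h\in L^1(\mu)$. And the fact you then invoke, that ``the time-average of a coboundary is zero a.e.'', is precisely the statement $h(\phi_n(x))/n\to 0$ a.e., which is Birkhoff applied to $h$ itself and so \emph{needs} $h\in L^1(\mu)$. As written the argument is therefore circular on the one genuinely delicate point, the possible accumulation of $\mu$ near $\Sing(X)$.

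The gap is repairable without ever deciding whether $\log\|X\|\in L^1(\mu)$, and the repair is in fact what the paper does. Since $g$ itself is bounded, Birkhoff applied to $g$ gives, for $\mu$-a.e.\ $x$,
\[
\frac1n\bigl(\log\|X(\phi_n(x))\|-\log\|X(x)\|\bigr)\longrightarrow\int g\,d\mu .
\]
Because $\|X\|$ is bounded above, the left-hand side has $\limsup\le 0$, hence $\int g\,d\mu\le 0$; applying the same estimate to $-X$ (equivalently, to $\phi_{-1}$) gives $\int g\,d\mu\ge 0$, so $\int g\,d\mu=0$ and the tempering follows. This is exactly the paper's one-sided $\limsup$ estimate ``since the flow speed is bounded'' combined with ``considering $-X$''. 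Your overall architecture — bounded cocycle plus tempered scalar implies same exponents and Oseledets splitting — is correct and matches the paper; only the appeal to $\log\|X\|\in L^1(\mu)$ should be replaced by the two-sided estimate above.
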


\begin{proof}
Note that for each $\tau>0$ and $t\in [-\tau,\tau]$, $\|\psi_t\|$ is bounded from above and $\|\Phi_t\|$ is bounded away from zero. So the upper bound of $\psi_t^*$ follows. 

For the Lyapunov exponents, we have 
$$
\lim_{t\to\pm\infty}\frac1t \log \|\psi_t^*{(v)}\| = \lim_{t\to\pm\infty}\frac1t (\log \|\psi_t(v)\| -\log \|\Phi_t|_{<X(x)>}\|),
$$
but $\limsup\frac1t\log \|\Phi_t|_{<X(x)>}\| \le 0$ since the flow speed is bounded.  The $\liminf$ follows by considering $-X$.
\end{proof}

Recall that $\pi:T_xM \to N_x$ is the orthogonal projection along flow direction. 
\begin{lemma}\label{l.domination.scaled}
We have $X|_{\supp\mu\setminus\Sing(X)}\subset F$. Furthermore, $\pi(E)\oplus \pi(F)$ is also a dominated splitting on $N_{\Lambda\setminus\Sing(X)}$ for both $\psi_t$ and $\psi^*_t$, which is also the Oseledets splitting for $(\psi_t,\mu)$ corresponding to the negative exponents and positive exponents. 
\end{lemma}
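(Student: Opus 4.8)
\textbf{Proof proposal for Lemma~\ref{l.domination.scaled}.}

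The plan is to prove the statement in three stages, first dealing with the flow direction, then transferring domination to the normal bundle, and finally matching the resulting splitting with the Oseledets splitting.

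First I would show that $X|_{\supp\mu\setminus\Sing(X)}\subset F$. The argument mirrors the Remark following the first Lemma of Section~\ref{s.preliminary} and the proof of Lemma~\ref{l.domination.singularity}: if at some regular point $x$ the flow direction $X(x)$ had a nonzero component in $E$, then, by domination $E\oplus F$ for $f=\phi_1$ (and hence for $\phi_t$), the vector $X(\phi_{-t}(x))=\Phi_{-t}(X(x))$ would align closer and closer to the $E$-cone as $t\to+\infty$ and its norm would grow exponentially under backward iteration. But $\|X(\phi_{-t}(x))\|$ is bounded above by $\max_{z}\|X(z)\|$ on the compact manifold, a contradiction. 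Hence $X(x)\in F(x)$ at every regular $x\in\supp\mu$; since $\dim E$ is the index of $\mu$ (a strictly positive number, as $\mu$ is hyperbolic and non-trivial), $E(x)$ is a genuine complement and $X(x)\notin E(x)$ everywhere on $\supp\mu\setminus\Sing(X)$, so in fact $E(x)\subset N_x$ for such $x$.

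Next, with $E\subset N$ established on the regular part, I would check that $\pi(E)\oplus\pi(F)$ is a dominated splitting for $\psi_t$ over $N_{\Lambda\setminus\Sing(X)}$. Since $\pi(E)=E$ already lies in $N$, domination reduces to comparing $\|\psi_t v\|$ for $v\in E$ with $\|\psi_t w\|$ for $w\in\pi(F)$. For $v\in E$ one has $\psi_t v=\pi_{\phi_t x}\Phi_t v$, and because $E$ is $\Phi_t$-invariant and already normal, $\psi_t v=\Phi_t v$, so $\|\psi_t v\|=\|\Phi_t v\|$. For $w=\pi_x(u)$ with $u\in F(x)$, write $u = w + c\,X(x)$ (using $X\in F$), so $\Phi_t u = \psi_t w + (\text{component along }X(\phi_t x))$ up to the projection, and one gets $\|\psi_t w\|\ge \|\Phi_t u\| - (\text{flow-direction term})$; comparing norms and using the domination of $E\oplus F$ for $\Phi_t$ together with the fact that the flow-direction term is only polynomially (indeed boundedly) distorted gives, after passing to a large enough power $\Phi_{L}$, the required factor-$\tfrac12$ domination for $\psi_{L}$. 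The same inequality for $\psi^*_t$ is immediate from Lemma~\ref{l.scaledflow}, since $\psi^*_t$ differs from $\psi_t$ only by the scalar $\|X(x)\|/\|X(\phi_t x)\|$, which cancels in the ratio defining domination; boundedness of this scalar over bounded time windows (Lemma~\ref{l.scaledflow}) is what keeps the cocycle from destroying the estimate.

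Finally, to identify $\pi(E)\oplus\pi(F)$ with the Oseledets splitting of $(\psi_t,\mu)$ into negative and positive exponents: by the theorem relating the Oseledets splittings of $f$ and $\psi_t$ (stated in Section~\ref{s.preliminary}), the Lyapunov exponents of $\psi_t$ are exactly those of $f$ with one zero exponent (the flow direction) removed, and the Oseledets subspaces of $\psi_t$ are the $\pi$-images of those of $f$. Since $\mu$ is hyperbolic, $\psi_t$ has no zero exponents; the $E$-subbundle carries precisely the $\dim E$ negative exponents of $f$ (which equal those of $\psi_t$ as $E\subset N$) and $\pi(F)$ carries the remaining, positive ones. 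The uniqueness of dominated splittings with fixed dimensions (\cite[B.1.1, p.288]{BDV}) then forces $\pi(E)\oplus\pi(F)$ to coincide with this Oseledets decomposition. The same conclusion for $\psi^*_t$ follows since it has the same exponents and Oseledets splitting by Lemma~\ref{l.scaledflow}. The main obstacle I anticipate is the bookkeeping in the middle step: controlling the interaction between the projection $\pi$ and the flow-direction component of vectors in $F$ uniformly on $\Lambda\setminus\Sing(X)$, where $\|X\|$ may be small — this is exactly where the scaled flow and Lemma~\ref{l.scaledflow} are designed to help, but one must be careful that the domination constant $L$ can be chosen independently of how close the orbit comes to $\Sing(X)$.
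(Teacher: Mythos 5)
The central step of your proposal has a genuine gap. You claim that because $\mu$ is hyperbolic and $X(x)\notin E(x)$, one has $E(x)\subset N_x$. This is a non-sequitur: $N_x$ is the \emph{orthogonal} complement of $X(x)$, so $E(x)\subset N_x$ would mean $E(x)\perp X(x)$, which is far stronger than $X(x)\notin E(x)$. In general the dominated splitting has no reason to be adapted to the Riemannian metric, and one cannot expect $E$ to be orthogonal to the flow direction; this is precisely why the linear Poincar\'e flow is defined via the projection $\pi_x$ rather than by restriction. Once the inclusion $E\subset N$ fails, the simplifications it was meant to buy also fail: you cannot conclude $\pi(E)=E$, nor $\psi_t v=\Phi_t v$ for $v\in E$, and the subsequent norm comparison collapses. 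The paper handles this point differently: it shows that the angle between $E$ and $X$ is bounded away from zero (by domination together with $X\subset F$), hence the angle between $E$ and $N_x$ is bounded away from $\pi/2$, so the restriction $\pi_x\colon E_x\to\pi(E_x)$ is an isomorphism with \emph{uniformly bounded inverse}. It then picks $u'=\pi^{-1}u\in E_x$ and $v'=\Phi_{-t}\psi_t(v)\in F_x$ and transfers the domination estimate for $\Phi_t$ to $\psi_t$ via $\|\psi_t(u)\|\le\|\Phi_t(u')\|$, $\Phi_t(v')=\psi_t(v)$, and $\|v'\|\ge\|v\|$ (the last using $\pi(v')=v$). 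These inequalities do the bookkeeping you were worried about without ever asserting $E\subset N$.

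There is a secondary issue in your first step. You argue that if $X(x)$ had a component in $E$, then $\|X(\phi_{-t}(x))\|$ would grow exponentially. Domination alone only gives that backward iterates align with the $E$-cone; exponential growth of the norm requires either that $E$ is uniformly contracting (not assumed in this lemma, where the hypothesis is only a dominated splitting together with hyperbolicity of $\mu$) or a Lyapunov-exponent argument at $\mu$-typical points. The paper takes the latter route: since $\mu$ is hyperbolic with index $\dim E$, the negative Oseledets exponents are carried by $E$ and the unique zero exponent (the flow direction) must therefore lie in $F$, yielding $X\subset F$ on $\supp\mu\setminus\Sing(X)$. Your final step identifying $\pi(E)\oplus\pi(F)$ with the Oseledets splitting is fine and matches the paper once the domination on $N$ is established correctly.
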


\begin{proof}
Since $\mu$ is hyperbolic with index $\dim E$, $f=\phi_1$ has precisely $\dim E$ many negative exponents, and a vanishing exponent given by the flow direction. Since $E\oplus F$ is dominated, Lyapunov exponents on $F$ must be larger than those in $E$, thus non-negative. It then follows that $E$ is the Oseledets splitting corresponding to the negative exponents, and  $X|_{\supp\mu\setminus\Sing(X)}\subset F$.

Next we will show the second part of this lemma only for $\psi_t$. The result for $\psi^*_t$ will then follow from Lemma~\ref{l.scaledflow}.

Take any $x\in\supp\mu$ and unit vectors $u\in\pi(E_x)$, $v\in\pi(F_x)$. Since $X|_{\supp\mu\setminus\Sing(X)}\subset F$, we have $v\in\pi(F_x)\subset F_x$. Let $u'=\pi^{-1}u\in E_x$ and $v'=\Phi_{-t}\circ\psi_t(v)\in F_x$. Since $E\oplus F$ is dominated for $\phi_t$, we must have 
$$
\frac{\|\Phi_t(u')\|}{\|\Phi_t(v')\|} < C \lambda^t\frac{\|u'\|}{\|v'\|},
$$ 
for some $C>0$ and $\lambda\in(0,1)$. 

Since $E\oplus F$ is dominated and  $X|_{\supp\mu\setminus\Sing(X)}\subset F$, the angle between $E$ and $X$ must be away from zero. On the other hand, since  $N_x$ is the orthogonal complement of $X$, the angle between  $E$ and $N_x$ must be away from $\pi/2$. This shows that there exists some constant $C_1$ independent of $x$, such that for all unit vectors $u\in \pi(E_x)$, $\|u'\|=\|\pi^{-1}u\|<C_1$. Note also that $\Phi_t(v') = \psi_t(v)$. It follows that 
\begin{equation}\label{e.scale1}
\frac{\|\Phi_t(u')\|}{\|\psi_t(v)\|} < CC_1 \lambda^t\frac{1}{\|v'\|}.
\end{equation}

From the definition of $\psi_t$ and $\pi$, we have 
$$
\psi_t(u) = \pi\circ\Phi_t(u) = \pi\circ\Phi_t(u') \,\mbox{ and }\pi(v')=v,
$$
thus
\begin{equation}\label{e.scale2}
\|\psi_t(u)\|\le \|\Phi_t(u')\| \mbox{ and } \|v'\|\ge \|v\|=1,
\end{equation}

Combining (\ref{e.scale1}) and (\ref{e.scale2}) we get
$$
\frac{\|\psi_t(u)\|}{\|\psi_t(v)\|}< CC_1 \lambda^t.
$$
Finally,  using the fact that $\pi(v')=v$ we get $\|v'\|\ge \|v\|=1$ and therefore
$$
\frac{\|\psi_t(u)\|}{\|\psi_t(v)\|}< CC_1 \lambda^t.
$$
This shows that $\pi(E)\oplus\pi(F)$ is a dominated splitting for $\psi_t$.

Since $\mu$ has index $\dim E = \dim (\pi(E))$, the argument used at the beginning of this proof shows that $\pi(E)\oplus\pi(F)$ is indeed the Oseledets splitting for $\psi_t$ corresponding to negative and positive exponents.
\end{proof}

Next we describe the hyperbolicity for the scaled linear Poincar\'e flow $\psi^*_t$.

\begin{definition}\label{d11}
For $T_0>0, \lambda\in(0,1)$, the orbit segment $\{\phi_t(x)\}_{[0,T]}$ is called {\em $(\lambda, T_0)^*$ quasi-hyperbolic} with respect to a splitting $N_x = E^N_x\oplus F^N_x$ and the scaled linear Poincar\'e flow $\psi^*_t$, if there exists a partition
$$
0=t_0<t_1<\cdots<t_l=T, \mbox{\hspace{.2cm} where } t_{i+1}-t_i\in [T_0,2T_0], 
$$
such that for $k=1,\ldots,l-1$, we have
$$
\prod_{i=0}^{k-1}\|\psi^*_{t_{i+1} - t_i}|_{\psi_{t_i}(E^N_x)}\| \le \lambda^k; \quad \prod_{i=k}^{l-1}m(\psi^*_{t_{i+1} - t_i}|_{\psi_{t_i}(F^N_x)}) \ge \lambda^{-(l-k)},
$$
and 
$$
\frac{\|\psi^*_{t_{i+1} - t_i}|_{\psi_{t_i}(E^N_x)}\|}{m(\psi^*_{t_{i+1} - t_i}|_{\psi_{t_i}(F^N_x)})} \le \lambda^2.
$$
\end{definition}

\begin{definition}\label{d12}
For $T_0>0$, $\lambda\in (0,1)$, an orbit segment $\phi_{[0,T]}(x)$ is called $(\lambda,T_0)$-forward contracting for the bundle $E\subset N_x$, if there exists a partition
$$
0=t_0<t_1<\cdots<t_n=T, \mbox{\hspace{.2cm} where } t_{i+1}-t_i\in [T_0,2T_0], 
$$
such that for all $k=1,\ldots,n-1$,
\begin{equation}\label{e.hyptime}
\prod_{i=0}^{k-1}\|\psi^*_{t_{i+1} - t_i}|_{\psi_{t_i}(E)}\| \le \lambda^k.
\end{equation}
Similarly, an orbit segment $\phi_{[-T,0]}(x)$ is called $(\lambda,T_0)$-backward contracting for the bundle $E\subset N_x$, if it is forward contracting for the flow $-X$.

A point $x$ is called a {\em $(\lambda,T_0)$-forward hyperbolic time for the bundle $E\subset N_x$}, if the infinite orbit $\phi_{[0,+\infty)}$ is $(\lambda,T_0)$-forward contracting. In this case the partition is taken as
$$
0=t_0<t_1<\cdots<t_n<\ldots, \mbox{\hspace{.2cm} where } t_{i+1}-t_i\in [T_0,2T_0], 
$$
and~\eqref{e.hyptime} is stated for all $k\in\NN$. Similarly,
$x$ is called a {\em $(\lambda,T_0)$-backward hyperbolic time for the bundle $E\subset N_x$}, if it is a forward hyperbolic time for $-X$.  $x$ is called a {\em two-sided hyperbolic time}, if it is both a  forward and backward hyperbolic time.

\end{definition}
By the classic work of Liao~\cite{Liao}, there exists $\delta>0$ such that if $x$ is a backward hyperbolic time, then $x$ has unstable manifold with size $\delta \|X(x)\|$. Similarly, if $x$ is a forward hyperbolic time then it has stable manifold with size $\delta \|X(x)\|$.  In both cases, we say that $x$ has unstable/stable manifold {\em up to the flow speed}.

The next lemma can be seen as a $C^1$ version of the Pesin theory for flows.
\begin{lemma}\label{l.C1pesin}
For almost every ergodic component $\tilde{\mu}$ of $\mu$ with respect to $f=\phi_1$, there are $L',\eta,T_0>0$ and a compact set $\Lambda_0\subset\supp\mu\setminus\Sing(X)$ with  positive $\tilde{\mu}$ measure, such that for every $x$ satisfying $f^n(x)\in \Lambda_0$ for  $n>L'$, the orbit segment $\{\phi_t(x)\}_{[0,n]}$ is $(\eta,T_0)^*$ quasi-hyperbolic with respect to the splitting $N_x = \pi(E_x)\oplus \pi(F_x)$ and the scaled linear Poincar\'e flow $\psi^*_t$.
\end{lemma}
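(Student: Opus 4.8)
The plan is to run the standard $C^1$ Pesin-type argument for the scaled linear Poincar\'e cocycle, but organized so that all estimates are measured with respect to $\psi^*_t$ rather than $\psi_t$. First I would pass to an ergodic component $\tilde\mu$ of $\mu$ for $f=\phi_1$; by Lemma~\ref{l.componententropy} and the remark following it (and by Lemma~\ref{l.scaledflow}) the relevant Lyapunov data is unchanged, so $\tilde\mu$ is hyperbolic with the same Oseledets splitting $N_x=\pi(E_x)\oplus\pi(F_x)$ and, by Lemma~\ref{l.domination.scaled}, this splitting is dominated and corresponds to the negative/positive exponents of $\psi_t$ and $\psi^*_t$. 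Fix $\eta\in(0,1)$ strictly between $1$ and $e^{-\lambda_-}$ (where $\lambda_-<0$ is the largest negative exponent), and also small enough that $e^{\lambda_-}<\eta^2\,e^{\lambda_+}$ where $\lambda_+\ge 0$ is the smallest positive exponent of $\psi^*_t$ on $\pi(F)$; we will build the partition so that each block of length in $[T_0,2T_0]$ contracts $\pi(E)$ by a definite factor, expands $\pi(F)$ by a definite factor, and has a definite gap between the two.

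Next I would apply Birkhoff's ergodic theorem to the cocycle functions $t\mapsto \log\|\psi^*_t|_{\pi(E)}\|$ and $t\mapsto -\log m(\psi^*_t|_{\pi(F)})$ over $\phi_t$ (these are well-defined and bounded on any compact set away from $\Sing(X)$ by Lemma~\ref{l.scaledflow}). The subadditive/additive ergodic theorem gives a full-$\tilde\mu$-measure set on which the time averages over windows of length $T_0$ converge to the corresponding exponents; choosing $T_0$ large enough and intersecting with the set where the convergence is uniform up to a fixed error, I extract a compact subset $\Lambda_0\subset\supp\mu\setminus\Sing(X)$ of positive $\tilde\mu$ measure (using regularity of $\tilde\mu$ and that singularities have zero measure) on which the one-step estimates
$$
\|\psi^*_{s}|_{\psi_{t}(\pi(E))}\|\le \eta,\qquad m(\psi^*_{s}|_{\psi_{t}(\pi(F))})\ge \eta^{-1},\qquad \frac{\|\psi^*_{s}|_{\psi_{t}(\pi(E))}\|}{m(\psi^*_{s}|_{\psi_{t}(\pi(F))})}\le \eta^2
$$
hold for every $x\in\Lambda_0$ and $s\in[T_0,2T_0]$, for an appropriate increment. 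Now, given $x$ with $f^n(x)\in\Lambda_0$ for some integer $n>L'$, I would subdivide $[0,n]$ into consecutive blocks of (flow-)length in $[T_0,2T_0]$ whose endpoints are chosen to land inside $\Lambda_0$ whenever possible; because the iterates $f^j(x)$ visit $\Lambda_0$ along a syndetic set of times once $n$ is large (this is where the Poincar\'e recurrence / Birkhoff frequency of visits to $\Lambda_0$ enters, and where $L'$ is needed to absorb an initial segment), the per-block estimates chain up multiplicatively to give $\prod_{i=0}^{k-1}\|\psi^*_{t_{i+1}-t_i}|_{\psi_{t_i}(\pi(E_x))}\|\le\eta^k$ and the symmetric lower bound for $\pi(F)$, which is exactly the $(\eta,T_0)^*$ quasi-hyperbolicity in Definition~\ref{d11}.

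The main obstacle I anticipate is the bookkeeping that converts "$f^n(x)\in\Lambda_0$ for large $n$" into a genuine partition $0=t_0<\cdots<t_l=n$ with \emph{all} increments in $[T_0,2T_0]$ and with \emph{enough} of the partition points lying in $\Lambda_0$ so that the telescoped products obey the required bounds at every intermediate index $k$, not just at the endpoints — the constant at the "bad" blocks (those not anchored in $\Lambda_0$) must be dominated by the uniform gain on the "good" blocks, which forces a careful choice of $T_0$ large and of $\eta$ not too close to the extremal exponents, and forces $L'$ to be chosen after $T_0$ and $\Lambda_0$. A secondary technical point is that the domination from Lemma~\ref{l.domination.scaled} is only available along $N_{\Lambda\setminus\Sing(X)}$, so one must keep the orbit segment uniformly away from singularities; this is guaranteed because $\Lambda_0$ is a fixed compact set disjoint from $\Sing(X)$ and the increments are bounded, so the whole segment $\{\phi_t(x)\}_{[0,n]}$ stays within a fixed compact region away from singularities once the anchoring points are in $\Lambda_0$. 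Everything else — boundedness of the cocycle on compact sets, affineness of entropy, uniqueness of the dominated splitting — is quoted from the results already established above.
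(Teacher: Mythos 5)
Your extraction of a positive-$\tilde\mu$-measure compact set $\Lambda_0\subset\supp\mu\setminus\Sing(X)$ on which a single block $\|\psi^*_s|_{\pi(E_x)}\|$, $s\in[T_0,2T_0]$, is uniformly contracting (and $m(\psi^*_s|_{\pi(F_x)})$ uniformly expanding) is fine: Kingman plus Egorov does give this. The genuine gap is in the chaining step. You propose to cut $[0,n]$ into blocks ``whose endpoints land inside $\Lambda_0$ whenever possible'' and invoke Poincar\'e recurrence / Birkhoff frequency of visits to $\Lambda_0$ to get a syndetic set of anchoring times. This does not work: Birkhoff only gives \emph{asymptotic density} of visits, for $\tilde\mu$-generic $x$, not syndeticity with gaps in a prescribed window $[T_0,2T_0]$; and the lemma must hold for \emph{every} $x$ with $x,f^n(x)\in\Lambda_0$, not only generic ones (the points of $K_n$ in the proof of Theorem~\ref{t.horseshoe} are finitely many separated points, not a full-measure set). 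Along the blocks that fail to be anchored in $\Lambda_0$ you have no control on the cocycle at all in a $C^1$ system, and you never introduce the uniform bound $K=\max_{|t|\le N_0}\|\psi^*_t\|$ that would tame them. Moreover Definition~\ref{d11} demands the partial products be $\le\lambda^k$ for \emph{every} $k$, not merely for a positive-density set of $k$, so even a Pliss-type ``good blocks dominate bad blocks'' argument (which you do not actually carry out) would not give the stated conclusion.

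The paper's proof sidesteps all of this and is structurally different. It does not try to get per-block control at the partition points; instead it controls the Birkhoff \emph{partial sums} of the cocycle: by the ergodic theorem for $f^{N_0}$, for $\tilde\mu_1$-a.e.\ $x$ and all $m>n_x$,
\[
\frac{1}{mN_0}\sum_{i=0}^{m-1}\log\|\psi^*_{N_0}|_{\pi(E_{f^{iN_0}(x)})}\|<a<0 ,
\]
with the symmetric backward bound on $\pi(F)$. The set $\Lambda_0$ is defined as $\{x:n_x<N_1\}$ --- i.e.\ the set where these partial averages stabilize by a uniform time --- and then the forward bound from $x\in\Lambda_0$ yields condition (1) of Definition~\ref{d11} on $\pi(E)$, while the backward bound from $f^n(x)\in\Lambda_0$ (this is why both endpoints must lie in $\Lambda_0$) yields condition (2) on $\pi(F)$; condition (3) is the one-step domination of Lemma~\ref{l.domination.scaled}. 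The mismatch between an arbitrary partition $\{n_i\}$ and the integer grid $N_0\ZZ$ is absorbed by the coarse bound $K$ from Lemma~\ref{l.scaledflow}, which is where $N_2$ (hence $T_0$) must be taken large so that $\frac{N_2+N_0}{N_0}a+3K<b<0$. If you want to repair your argument, replace the ``anchor every partition point in $\Lambda_0$'' idea by this control of partial sums from the two anchored endpoints, and add the uniform bound $K$ to handle the integer-grid correction; otherwise the ``syndetic visits'' step is a missing hypothesis that cannot be supplied.
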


\begin{proof}
The proof is very standard. 
By Lemma~\ref{l.domination.scaled}, for $\tilde{\mu}$ almost every $x$, $\pi(E_x)\oplus \pi(F_x)$ is the Oseledets splitting of $\psi^*_t$ corresponding to the negative and positive exponents. By the subadditive ergodic theorem, there is $a<0$ such that for $N_0$ large enough, we have 
$$
\frac{1}{N_0} \int\log\|\psi^*_{N_0}|_{\pi(E)}\|\,d\tilde{\mu} < a \,\,\mbox{ and }\,\lim_{t\to -\infty}\frac{1}{N_0} \int\log\|\psi^*_{-N_0}|_{\pi(F)}\|\,d\tilde{\mu} < a.
$$
Let 
$$
\tilde{\mu} = \frac{1}{k_0}(\tilde{\mu}_1+\cdots\tilde{\mu}_{k_0})
$$
be the ergodic decomposition of $\tilde{\mu}$ with respect to $f^{N_0}$. Change the order if necessary, we may assume that 
$$
\frac{1}{N_0} \int\log\|\psi^*_{N_0}|_{\pi(E)}\|\,d\tilde{\mu}_1 < a \,\,\mbox{ and }\,\frac{1}{N_0} \int\log\|\psi^*_{-N_0}|_{\pi(F)}\|\,d\tilde{\mu}_1 < a.
$$
By the Birkhoff ergodic theorem on $f^{N_0}$, for $\tilde{\mu}_1$ almost every $x$, 
$$
\lim_{m\to\infty} \frac{1}{mN_0}\sum\limits_{i=0}^{m-1}\log\|\psi^*_{N_0}|_{{\pi(E_{f^{iN_0}(x)})}}\| < a,
$$
and similarly on $\pi(F)$:
$$
\lim_{m\to\infty} \frac{1}{mN_0}\sum\limits_{i=0}^{m-1}\log\|\psi^*_{-N_0}|_{{\pi(F_{f^{-iN_0}(x)})}}\| < a.
$$
Take $n_x>0$ such that the above inequalities holds for all $m>n_x$, and $N_1$ such that the set $\Lambda' = \{x: n_x<N_1\}$ has positive $\tilde{\mu}_1$ measure. Let $\Lambda_0\subset\Lambda'\setminus\Sing(X)$ be compact and has positive $\tilde{\mu}_1$ measure. Then $\tilde{\mu}(\Lambda_0)>0$. By Lemma~\ref{l.scaledflow}, we can take
$$
K = \max_{|t|<N_0, y\in\supp\mu\setminus\Sing(X)}\left\{\sup\{\psi^*_t|_{E_y}\};\sup\{\psi^*_t|_{F_y}\}\right\}.
$$
Choose $N_2$ large enough such that 
$$
\frac{N_2+N_0}{N_0}a + 3K < b <0
$$
for some $b<0$. We claim that for any sequence $n_1<n_2<\ldots<n_l$ with $N_2\le n_{i+1}-n_i\le N_2+N_0$ for each $i$, we have 
$$
\frac{1}{l}\sum_{i=0}^{l-1}\log \|\psi^*_{n_{i+1}-n_i}|_{\pi(E_{f^{n_i}(x)})}\| < b <0,
$$
and a similar inequality holds on $\pi(F)$. The lemma will then follow from this claim and the domination between $\pi(E)$ and $\pi(F)$. 

To prove this claim, for each $i=1,\ldots,l-1$, write
$$
k_i = [\frac{n_{i+1}}{N_0}] - [\frac{n_i}{N_0}] - 1, \, n_i' = ([\frac{n_i}{N_0}]+1)N_0 \mbox{ and } n^*_{i+1} = [\frac{n_{i+1}}{N_0}]N_0.
$$
Then we have $n^*_i \le n_i \le n'_i$, $n^*_{i+1}-n_i'=k_iN_0$, and
$$
\psi^*_{n_{i+1}-n_i}|_{\pi(E_{f^{n_i}(x)})} = \psi^*_{n_{i+1}-n^*_{i+1}}|_{\pi(E_{f^{n^*_{i+1}}(x)})}\circ\psi^*_{k_iN_0}|_{\pi(E_{f^{n'_i}(x)})}\circ\psi^*_{n'_{i}-n_{i}}|_{\pi(E_{f^{n_{i}}(x)})}
$$
Note that $n_{i+1}-n^*_{i+1} \le N_0$ and $n'_{i}-n_{i} \le N_0$. By the choice of $K$, we have 
\begin{align*}
\log \|\psi^*_{n_{i+1}-n_i}|_{\pi(E_{f^{n_i}(x)})}\| \le& 2K+ \log \|\psi^*_{k_iN_0}|_{\pi(E_{f^{n'_i}(x)})}\|\\
\le& 3K+\log \|\psi^*_{n'_{i+1}- n'_i}|_{\pi(E_{f^{n'_i}(x)})}\|.
\end{align*}
Sum over $i$, we obtain
\begin{align*}
\frac{1}{l}\sum_{i=0}^{l-1}\log \|\psi^*_{n_{i+1}-n_i}|_{\pi(E_{f^{n_i}(x)})}\| \le&\, \frac1l\sum_{i=0}^{l}\log \|\psi^*_{n'_{i+1}- n'_i}|_{\pi(E_{f^{n'_i}(x)})}\| +3K\\
\le& \,\frac1l\sum_{i=0}^{\frac{n'_{l+1}}{N_0}}\log\|\psi^*_{N_0}|_{\pi(E_{f^{jN_0}(x)})}\| +3K\\
\le& \, \frac{n'_{l+1}}{l\, N_0}a + 3K \le \frac{N_2+N_0}{N_0}a + 3K <b <0.
\end{align*}
\end{proof}

%%%%%%%%%%%%%%%%%%%%%%%%%%%%%%%%%
%%%%%%%%%%%%%%%%%%%%%%%%%%%%%%%%%

\subsection{A shadowing lemma by Liao and proof of Theorem~\ref{t.horseshoe}}\label{ss.shadowing}
In this section we will introduce a shadowing lemma by Liao~\cite{Liao} for the scaled linear Poincar\'e flow. 

\begin{lemma}\label{l.shadowing}
Given a compact set $\Lambda_0$ with $\Lambda_0 \cap \Sing(X) = \emptyset$ and $\eta\in(0,1),T_0>0$, for any $\varepsilon>0$ there exists $\delta>0$, $L>0$ and $\delta_0>0$, such that for any $(\eta,T_0)^*$ quasi-hyperbolic orbit segment $\{\phi_t(x)\}_{[0,T]}$ with respect to a dominated splitting $N_x = E_x \oplus F_x$ and the scaled linear Poincar\'e flow $\psi^*_t$, if $x,\phi_T(x) \in \Lambda_0$ with $d(x,\phi_T(x))<\delta$, then there exists a point $p$ and a $C^1$ strictly increasing function $\theta:[0,T] \to \RR$, such that
\begin{enumerate}[label=(\alph*)]
\item $\theta(0)=0$ and $|\theta'(t)-1|<\varepsilon$;
\item $p$ is a periodic point with $\phi_{\theta(T)}(p)=p$;
\item $d(\phi_t(x), \phi_{\theta(t)}(p))\le \varepsilon\|X(\phi_t(x))\|$, for all $t\in[0,T]$;
\item $d(\phi_t(x), \phi_{\theta(t)}(p))\le Ld(x,\phi_{T}(x))$;
\item $p$ has stable and unstable manifold with size at least $\delta_0$.
\item if $\Lambda_0\subset\Lambda$ for a sectional hyperbolic chain recurrent class $\Lambda$, then $p\in\Lambda$.
\end{enumerate}
Furthermore, the result remains true with the same constants $\delta$, $L$ and $\delta_0>0$ if $\Lambda_0$ is replaced by a subset of  $\Lambda_0$.
\end{lemma}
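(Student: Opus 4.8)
The plan is to follow Liao's method~\cite{Liao}, carried out entirely in terms of the scaled linear Poincar\'e flow $\psi^*_t$ on the normal bundle over the orbit segment $\{\phi_t(x)\}_{t\in[0,T]}$. The point of the scaling $\psi^*_t(v)=\psi_t(v)/\|\Phi_t|_{\langle X\rangle}\|$ is that, although the genuine flow speed $\|X(\phi_t(x))\|$ can be extremely small where the segment passes near a singularity, the return maps between normal sections rescaled by the local flow speed stay uniformly $C^1$-bounded, so the $(\eta,T_0)^*$ quasi-hyperbolic estimates of Definition~\ref{d11} become a genuine, uniform hyperbolic structure. Concretely, for a small parameter $r>0$ I would attach to each $t$ the normal disk $\Sigma_t=\exp_{\phi_t(x)}\!\big(N_{\phi_t(x)}\cap B_{r\|X(\phi_t(x))\|}(0)\big)$, and, for the partition times $0=t_0<\cdots<t_l=T$ furnished by quasi-hyperbolicity, the Poincar\'e maps $P_i\colon\Sigma_{t_i}\to\Sigma_{t_{i+1}}$ along the flow. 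A direct estimate shows that, in coordinates rescaled by the local flow speed, $DP_i$ at the base point is $\psi^*_{t_{i+1}-t_i}$ up to bounded linear identifications, while the nonlinear part of $P_i$ is uniformly small once $r$ is small; hence $\{P_i\}_{i=0}^{l-1}$ is a sequence of uniformly hyperbolic maps (contracting the $E$-cone, expanding the $F$-cone, with the definite gap guaranteed by the third inequality in Definition~\ref{d11}).

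Next I would run the classical graph-transform (Hadamard--Perron) argument for this finite sequence of hyperbolic maps. This produces, inside each $\Sigma_{t_i}$, local stable manifolds of radius at least $\delta_0\|X(\phi_{t_i}(x))\|$ that the $P_i$ carry into one another, and, running time backwards, local unstable manifolds; this is exactly the statement that quasi-hyperbolic points have stable and unstable manifolds up to the flow speed, and is the technical heart of the lemma. For the periodic orbit: since $d(x,\phi_T(x))<\delta$ with both endpoints in $\Lambda_0$, where $\|X\|\ge c_0>0$, the sections $\Sigma_{t_l}$ and $\Sigma_0$ are $O(\delta)$-close, so composing $P_{l-1}\circ\cdots\circ P_0$ with the short holonomy $\Sigma_{t_l}\to\Sigma_0$ gives a hyperbolic self-map of a neighborhood of $x$ in $\Sigma_0$; a contraction argument on graphs over the $F$-direction yields a unique fixed point $p$ near $x$ whose flow orbit is periodic. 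Property (b) holds by construction; tracking the displacement through the graph transform gives the Lipschitz shadowing bound (d) with $L$ depending only on the hyperbolicity constants and $c_0$; the reparametrization $\theta$ is characterized by $\phi_{\theta(t)}(p)\in\Sigma_t$, which by transversality of the two orbits is $C^1$, strictly increasing, with $\theta(0)=0$ and $\phi_{\theta(T)}(p)=p$, and $|\theta'-1|<\varepsilon$ because the two orbits are $\varepsilon$-close so their speeds are comparable, giving (a); property (c) holds because $\phi_{\theta(t)}(p)$ stays in the tube $\bigcup_t\Sigma_t$, whose fibre over $\phi_t(x)$ has radius at most $\varepsilon\|X(\phi_t(x))\|$ once $r$ is taken below $\varepsilon$; and (e) holds because $p$ inherits a uniform $(\eta,T_0)^*$-hyperbolic estimate along its own orbit and $\|X(p)\|\ge c_0$, so the invariant manifolds built above have absolute size at least some $\delta_0$ depending only on $\eta,T_0,\Lambda_0$.

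For (f), assume $\Lambda_0\subset\Lambda$ with $\Lambda$ a sectional hyperbolic chain recurrent class. By (c) the entire (periodic) orbit of $p$ lies within $\varepsilon D_0$ of $\{\phi_t(x)\}_{[0,T]}\subset\Lambda$, and $d(x,p)$ is small. Using chain transitivity inside $\Lambda$, one can chain within $\Lambda$ from any $z\in\Lambda$ to near $x$, jump onto $p$, follow the orbit of $p$ for as many periods as needed so that every jump time exceeds the required threshold, jump back into $\Lambda$ near $x$, and chain back to $z$; this shows that $p$ is chain equivalent to $z$, hence $p\in\Lambda$. Finally, the \textbf{Furthermore} statement is immediate from the construction: $\delta$, $L$ and $\delta_0$ were extracted using only $D_0=\max_M\|X\|$, the lower flow-speed bound on $\Lambda_0$, the $C^1$-size of $X$, and $\eta,T_0$; replacing $\Lambda_0$ by a subset only increases or leaves unchanged the lower bound on $\|X\|$, so the same constants work.

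The main obstacle is the step producing stable and unstable manifolds of size proportional to the flow speed for $\psi^*_t$: one must make the graph transform converge uniformly while controlling the nonlinear remainders of the cross-section maps as the segment passes arbitrarily close to singularities, and it is precisely here that the scaling $\psi^*_t=\psi_t/\|\Phi_t|_{\langle X\rangle}\|$ and the boundedness from Lemma~\ref{l.scaledflow} are essential. Once that is in place, the closing argument, the reparametrization, and items (a)--(d),(f) are comparatively routine.
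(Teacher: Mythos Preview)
The paper does not actually prove this lemma: it is stated as a shadowing result due to Liao~\cite{Liao} (with related work in~\cite{G02,GY}) and then used as a black box. Your sketch for items (a)--(e) is a faithful outline of Liao's method: attach normal disks of radius proportional to the local flow speed, observe that in these rescaled charts the transition maps have derivative $\psi^*_{t_{i+1}-t_i}$ (bounded by Lemma~\ref{l.scaledflow}), run a graph transform along the $(\eta,T_0)^*$-hyperbolic sequence, and close up using the holonomy between the first and last cross sections. That is exactly the intended approach, and your identification of the key analytic point---uniform control of the nonlinear remainders as the segment passes near singularities, which is precisely what the scaling buys---is correct.

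Your argument for (f), however, has a gap. The chain you build from $z\in\Lambda$ to $p$ and back contains two jumps of size $\approx d(x,p)\le L\,d(x,\phi_T(x))$, and this quantity is fixed once the orbit segment is fixed. Chain equivalence requires $(\varepsilon',T)$-chains for \emph{every} $\varepsilon'>0$, so your construction fails once $\varepsilon'<d(x,p)$. Sectional hyperbolicity must be used more substantially here. One clean fix, valid in the setting where (f) is actually invoked (the proof of Theorem~\ref{t.horseshoe}, where $x$ is $\tilde\mu$-typical and hence recurrent to $\Lambda_0$): Poincar\'e recurrence produces return times $T_k\to\infty$ with $\phi_{T_k}(x)\in\Lambda_0$, $\{\phi_t(x)\}_{[0,T_k]}$ quasi-hyperbolic, and $d(x,\phi_{T_k}(x))\to 0$; shadowing yields periodic points $p_k$ with $d(x,p_k)\to 0$, all with invariant manifolds of size $\ge\delta_0$ by (e), hence all homoclinically related to one another and to $p$. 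Now for any $\varepsilon'>0$ pick $k$ with $d(x,p_k)<\varepsilon'$; your chain argument gives an $\varepsilon'$-chain between $x$ and $p_k$, and the heteroclinic orbits from the homoclinic relation give genuine orbits (hence $\varepsilon'$-chains for every $\varepsilon'$) between $p_k$ and $p$. Concatenating shows $p$ is chain-equivalent to $x$ at every scale, so $p\in\Lambda$. Alternatively one can argue directly that $W^{ss}(x)\pitchfork W^u(\Orb(p))\ne\emptyset$ (using the uniform strong stable lamination from $E^s$) to get actual asymptotic connections in one direction, but the reverse direction still needs something like the recurrence argument above since $x$ need not carry an unstable manifold of the right dimension.
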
  

\begin{remark}
By (a), the period of the shadowing orbit, $\theta(T)$, satisfies $\theta(T)\in[T(1-\varepsilon), T(1+\varepsilon)]$. However, using the fact that 
$$
\sum\limits_{i=0}^{l-1} d(\phi_{\theta(t_i)}(p),\phi_{t_{i}}(x)) \le L^* d(x,\phi_{T}(x)).
$$ 
(note that the constant $L^*$ on the right hand does not depend on $T$), one can show the following modified version of (a):
\begin{enumerate}[label=(\alph*')]
\item we have $\theta(0)=0$ and $|\theta'(t)-1|<\varepsilon$; furthermore, there is a constant $C$ independent of $T$, such that $\theta(T)\in [T-C\varepsilon, T+C\varepsilon]$.
\end{enumerate}
\end{remark}

Now we are ready to prove Theorem~\ref{t.horseshoe}.

Let $\tilde{\mu}$ be a typical ergodic component of $\mu$ with respect to $f$. By Lemma~\ref{l.componententropy}, $h_{\tilde{\mu}}(f) = h_\mu(f)>0$. Let $\Lambda_0$ be the compact set with positive $\tilde{\mu}$ measure given by Lemma~\ref{l.C1pesin}.  Also let $L',T_0,\eta>0$ be the constants given by the same lemma. Apply Lemma~\ref{l.shadowing} with $\Lambda_0$ and $\eta, T_0$, for every $\varepsilon > 0$ we obtain $\delta,L$ and $\delta_0$. 

Replace $\Lambda_0$ by a compact subset if necessary, we may assume that $\Lambda_0$ is away from singularities with diameter small enough, such that any two periodic points obtained by Lemma~\ref{l.shadowing} are homoclinic related. Following the proof of~\cite[Theorem 4.3]{K80}, for every $\alpha,l>0$ and $n\in \NN$, there is a finite set $K_n = K_n(\alpha,l)$ with the following property:
\begin{itemize}
\item $K_n \subset \Lambda_0$;
\item for $x,y\in K_n$, $d_n^f(x,y) = \max_{0\le j\le n-1}\{d(f^jx,f^jy)\} > \frac1l$;
\item for every $x\in K_n$, there is an integer $m(x)$ with $n\le m(x)\le (1+\alpha)n$, such that $f^{m(x)}(x)\in \Lambda_0$ with $d(x,f^{m(x)}(x)) < \frac{1}{4Ll}$;
\item $\lim_{l\to\infty}\lim_{n\to\infty}\frac{1}{n}\log\Card K_n(\alpha, l) \ge h_{\tilde{\mu}}(f)-\alpha$, where $\Card K$ for a subset $K$ denotes the cardinality of $K$.
\end{itemize}
We take $n,l$ large enough, such that $n>L'$, $\frac{1}{4l}<\delta$ and 
$$
\Card K_n(\alpha,l) > \exp(n(h_{\tilde{\mu}} - \alpha)).
$$

For every $x\in K_n$, by Lemma~\ref{l.shadowing} the orbit segment $\{\phi_t(x)\}_{[0,m(x)]}$ is shadowed by a periodic point $p_x$ with period no more than $n(1+\varepsilon)(1+\alpha)$. Item (d) in Lemma~\ref{l.shadowing} guarantees that $d^f_n(x,p_x) \le L d(x,f^{m(x)}(x)) < \frac{1}{4l}.$ As a result,
$$
d^f_n(p_x,p_y) \ge d^f_n(x,y) - d^f_n(x,p_x) -  d^f_n(y,p_y)  > \frac{1}{2l}.
$$
Note that different $x,y \in K_n$ may be shadowed by the same periodic orbit $\Orb(p)$. When this happens, we must have $p_x = \phi_{t_{y,x}}(p_y)$ for some $t_{y,x}$. Then the estimate above implies that 
$$
t_{y,x} > \frac{1}{2lD},
$$
where $D = \max\{\|X\|\}$ is the maximum of the flow speed as before. Therefore, for each $x\in K_n$, the periodic orbit $\Orb(p_x)$ can shadow no more than $2lDn(1+\varepsilon)(1+\alpha)$ different points in $K_n$.

As a result, there are at least 
$$
k_n = \frac{\exp(n(h_{\tilde{\mu}} - \alpha))}{2lDn(1+\varepsilon)(1+\alpha)}
$$
many different periodic orbits, with periodic at most $n(1+\varepsilon)(1+\alpha)$. Since they are homoclinic related near $\Lambda_0$, we have a horse-shoe with topological entropy at least
$$
\lim_n\frac{1}{n(1+\varepsilon)(1+\alpha)} \log k_n =\lim_n\frac{1}{n(1+\varepsilon)(1+\alpha)}\log \frac{\exp(n(h_{\tilde{\mu}} - \alpha))}{2lDn(1+\varepsilon)(1+\alpha)},
$$
which converges to $\frac{h_{\tilde{\mu}}-\alpha}{(1+\varepsilon)(1+\alpha)}$ as $n\to\infty$. Then Theorem~\ref{t.horseshoe} follows by taking $\alpha$ and $\varepsilon$ small enough.

\begin{remark}
A similar result for star flows can be found in~\cite{LSWW}. Instead of using the shadowing lemma, they take a small neighborhood $N$ of $K_n$ and consider the Poincar\'e return map $\cP_x$ from the neighborhood of a point $x\in N$ to a neighborhood of $\phi_{m(x)+\tau_x}(x)\in N$. Then they show that for every $x,y\in K_n$, the connected component of $P_x(N)$ crosses the connected component of $P^{-1}_y(N)$, thus giving a horseshoe with $\Card K_n$ many components. 
\end{remark}
\begin{proof}[Proof of Corollary~\ref{mc.robusthexpansive}]
Recall that $\Lambda$ is a Lorenz-like class if it is a sectional hyperbolic, Lyapunov stable chain recurrent class. Then Corollary~\ref{mc.robusthexpansive} follows from Theorem~\ref{m.h-expansive}, ~\ref{m.positiveentropy}, ~\ref{m.continuity} and Lemma~\ref{l.shadowing}(f).
\end{proof}

\subsection{$C^1$ generic flows: proof of Corollary~\ref{mc.attractor}}
In this section, $\Lambda$ will be a Lorenz-like class of a $C^1$ flow $\phi_t$. Note that for every $x\in\Lambda$, the unstable set of $x$ is contained in $\Lambda$.

We need the following generic properties for $C^1$ flows. The first property is the flow version of a well known property for $C^1$ generic diffeomorphisms, which can be found in~\cite{BC}.

\begin{proposition}\label{p.homoclinicclass}
$C^1$ generically, every chain recurrent class $C$ of $\phi_t$ that contains a periodic point $p$ coincides with the homoclinic class of $\Orb(p)$. In particular, $C$ is transitive. 
\end{proposition}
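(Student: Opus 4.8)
The plan is to transpose to flows the theorem of Bonatti and Crovisier~\cite{BC} asserting that for $C^1$ generic diffeomorphisms the chain recurrence class of a hyperbolic periodic point coincides with its homoclinic class. First I would fix the residual set $\cR$ of $C^1$ vector fields on which the following hold: (i) every critical element is hyperbolic, so in particular the periodic point $p$ in the statement is hyperbolic and admits a well-defined continuation $\gamma_Y$ for $Y$ near $X$ (Kupka--Smale for flows); (ii) periodic orbits are dense in ${\rm CR}(X)$ (the general density theorem for flows, via the closing lemma); and (iii) for every periodic orbit $\gamma$, the maps $Y\mapsto H(\gamma_Y,Y)$ and $Y\mapsto C(\gamma_Y,Y)$, where $H(\cdot)$ and $C(\cdot)$ denote the homoclinic class and the chain recurrence class, are continuous at $X$ in the Hausdorff topology. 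Property (iii) is standard Baire theory: the homoclinic class is lower semicontinuous in $Y$ (transverse homoclinic intersections are open conditions) and the chain recurrence class of a hyperbolic continuation is upper semicontinuous in $Y$, so each is continuous on a residual set; intersecting these residual sets over a countable family of periodic orbits that is dense among the periodic orbits of all nearby vector fields yields a residual set on which (iii) holds for \emph{all} periodic orbits.

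Fix $X\in\cR$, a chain recurrence class $C$ containing a (necessarily hyperbolic) periodic point $p$, set $\gamma=\Orb(p)$ and $H=H(\gamma,X)$, so that $C=C(\gamma,X)$. The inclusion $H\subseteq C$ is elementary and valid for every $C^1$ flow: if two periodic orbits are homoclinically related to $\gamma$, they are joined to $\gamma$ and back by genuine orbits, hence are chain attainable from one another; since $C$ is closed and chain transitive, it contains the closure of the union of all periodic orbits homoclinically related to $\gamma$, which is precisely $H$.

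For the reverse inclusion $C\subseteq H$ I would invoke the $C^1$ connecting lemma for flows and, crucially, its version for pseudo-orbits. Let $z\in C$. For every $\varepsilon>0$ there is an $(\varepsilon,1)$-chain from a point of $\gamma$ to $z$ and one from $z$ back to $\gamma$; concatenating them, $z$ lies on a periodic $\varepsilon$-pseudo-orbit through $\gamma$. The pseudo-orbit connecting lemma then provides, in every $C^1$-neighborhood $\cU$ of $X$, a vector field $Y\in\cU$ and a point $z'$ arbitrarily close to $z$ with $z'\in W^u(\gamma_Y,Y)\cap W^s(\gamma_Y,Y)$, the intersection being transverse after an arbitrarily small additional perturbation, so that $z'$ belongs to the homoclinic class $H(\gamma_Y,Y)$. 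By the continuity in (iii), $H(\gamma_Y,Y)$ is Hausdorff-close to $H$, hence $z$ lies within a small distance of $H$; letting $\cU$ shrink and using that $H$ is closed gives $z\in H$. Thus $C=H$.

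Finally, the homoclinic class of a hyperbolic periodic orbit is transitive: the transverse homoclinic points of $\gamma$ are dense in $H$, the periodic orbits in $H$ are pairwise homoclinically related, and an inclination-lemma ($\lambda$-lemma) argument produces a dense orbit in $H$. Hence $C=H$ is transitive, which completes the proof. The main obstacle is the flow version of the connecting lemma for pseudo-orbits: one must keep the perturbation localized, handle the time reparametrization intrinsic to connecting lemmas for flows, and ensure that a chain whose jump times may be required to exceed a prescribed $T$ can indeed be realized by a genuine orbit of the perturbed flow. This is the technical heart of the argument; the remainder is the Baire-category and semicontinuity bookkeeping sketched above.
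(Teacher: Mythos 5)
The paper offers no proof of this proposition: it simply points to Bonatti--Crovisier~\cite{BC} and declares the statement to be the flow analogue of a result proved there. Your proposal is exactly that: a correct summary of the Bonatti--Crovisier strategy (Kupka--Smale plus general density on a residual set, semicontinuity of $Y\mapsto H(\gamma_Y,Y)$ and $Y\mapsto C(\gamma_Y,Y)$ made into continuity by a Baire argument, the elementary inclusion $H\subseteq C$, and the reverse inclusion via the connecting lemma for pseudo-orbits together with the continuity of $H$), and you correctly flag the flow version of the pseudo-orbit connecting lemma --- including time reparametrization and behaviour near singularities --- as the genuine technical heart. Since the paper itself defers entirely to~\cite{BC}, your approach is essentially the same as what the paper intends.
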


The next property is a simple application of the connecting lemma in~\cite{BC}, applied to a branch of stable manifold of the singularity and the unstable manifold of the periodic orbit $\Orb(p)$.
\begin{proposition}\label{p.generic}
Let $C$ be a chain recurrent class for a $C^1$ generic flow $\phi_t$, such that $C$ contains a hyperbolic singularity $\sigma$ and a hyperbolic periodic point $p$. Assume that on $\sigma$, the stable subspace $E^{cs}_\sigma$ has a dominated splitting $E^{cs}_\sigma = E^{s}_\sigma\oplus E^c_\sigma$, where $E^c_\sigma$ is a 1-dimensional sub-bundle of $E^{cs}_\sigma$. Then the strong stable manifold $W^{s}(\sigma)$ divides the stable manifold $W^{cs}(\sigma)$ into two branches $W^{cs,+}(\sigma)$ and $W^{cs,-}(\sigma)$; furthermore, if $C\cap W^{cs,\pm}(\sigma)\setminus\{\sigma\}\ne\emptyset$, then $W^{cs,\pm}(\sigma)\cap W^u(p)\ne\emptyset$. 
\end{proposition}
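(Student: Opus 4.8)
The plan is to prove the two halves of the statement separately: the topological ``two branches'' assertion, and then the connecting assertion, which is where the hypothesis of $C^{1}$-genericity and the connecting lemma of~\cite{BC} enter.

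\emph{Topological separation.} Since $\sigma$ is a hyperbolic singularity, $W^{cs}(\sigma)$ is an injectively immersed submanifold diffeomorphic to $\RR^{\dim E^{cs}_\sigma}$, and the strong stable manifold $W^{s}(\sigma)$, tangent at $\sigma$ to $E^{s}_\sigma$, is a submanifold of $W^{cs}(\sigma)$ of codimension one because $\dim E^{c}_\sigma=1$. Near $\sigma$, the strong stable foliation of $W^{cs}_{\mathrm{loc}}(\sigma)$ (leaves tangent to $E^{s}$) exhibits $W^{cs}_{\mathrm{loc}}(\sigma)\setminus W^{s}_{\mathrm{loc}}(\sigma)$ as a disjoint union of two half-disks. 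Since $W^{cs}(\sigma)=\bigcup_{t\ge 0}\phi_{-t}(W^{cs}_{\mathrm{loc}}(\sigma))$, the flow acts on $W^{cs}(\sigma)$ as a homeomorphism preserving $W^{s}(\sigma)$, and every orbit in $W^{cs}(\sigma)\setminus W^{s}(\sigma)$ eventually enters $W^{cs}_{\mathrm{loc}}(\sigma)\setminus W^{s}_{\mathrm{loc}}(\sigma)$; hence $W^{cs}(\sigma)\setminus W^{s}(\sigma)$ has exactly two connected components $W^{cs,+}(\sigma)$ and $W^{cs,-}(\sigma)$, and since $\phi_{0}=\mathrm{id}$ each is $\phi_{t}$-invariant for every $t$.

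\emph{The connecting step (for a single fixed pair).} Assume $C\cap W^{cs,+}(\sigma)\setminus\{\sigma\}\ne\emptyset$; the argument for $W^{cs,-}$ is identical. Pick $z$ in this intersection, and a point $b=\phi_{-T}(z)$ lying in a fundamental domain of $W^{cs,+}(\sigma)$ for $T$ large; then $b$ is a regular point on $\Orb(z)\subset C$, its forward orbit stays in the flow-invariant set $W^{cs,+}(\sigma)$ and converges to $\sigma$, and, $C$ being a chain recurrent class containing $p$ and $b$, there is for each $\varepsilon>0$ an $\varepsilon$-pseudo-orbit from (a point in a fundamental domain of $W^{u}(\Orb(p))$ near) $p$ to $b$. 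Concatenating with the true forward orbit of $b$ gives, for each $\varepsilon$, an $\varepsilon$-pseudo-orbit from $W^{u}(\Orb(p))$ that enters $W^{cs,+}(\sigma)$ and converges to $\sigma$ inside that branch. The connecting lemma for pseudo-orbits of~\cite{BC} then yields a $C^{1}$-small perturbation $\psi$ of $\phi$, with perturbation support contained in a small neighborhood of the pseudo-orbit but disjoint from a fixed neighborhood of $\sigma$ and from $\Orb(p)$, for which $\Orb(p)$ and $\sigma$ persist in the same chain class and $W^{u}(\Orb(p_\psi))\cap W^{cs}(\sigma_\psi)\ne\emptyset$; since $\psi$ agrees with $\phi$ near $\sigma$, the branch decomposition at $\sigma$ is unchanged and this heteroclinic point lies on $W^{cs,+}(\sigma_\psi)$. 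Thus ``$W^{u}(\Orb(p))$ meets the prescribed branch'' holds on a dense set of flows among those satisfying the hypotheses.

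\emph{Genericity bookkeeping.} To upgrade ``dense'' to ``residual'' one runs the standard argument from~\cite{BC}: restrict to Kupka–Smale flows, enumerate by $m\in\NN$ the finitely many (up to continuation) hyperbolic periodic orbits $\gamma$ and hyperbolic singularities $\sigma$ of period and hyperbolicity bounded by $m$ together with a choice of branch $\pm$ at $\sigma$, and for each such triple consider the set of flows $\psi$ for which \emph{either} the continuations $\gamma_\psi,\sigma_\psi$ do not both persist in one chain class meeting $W^{cs,\pm}(\sigma_\psi)\setminus\{\sigma_\psi\}$ \emph{or} $W^{u}(\Orb(\gamma_\psi))\cap W^{cs,\pm}(\sigma_\psi)\ne\emptyset$; the first alternative is open, the second is open and dense in the complement by the previous paragraph together with the $\lambda$-lemma (which makes the heteroclinic intersection, after a further generic perturbation to a quasi-transverse one, rob\-ust, hence the condition open), so the union contains a residual set. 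Intersecting over the countable family gives the residual set in the statement. The main obstacle is exactly this last step: one must ensure that (i) the connecting-lemma perturbation can genuinely be localized away from $\sigma$, so that the branch survives and the new orbit really lands in $W^{cs,+}$ rather than on $W^{s}(\sigma)$, and (ii) the heteroclinic intersection so produced can be made robust, so that density promotes to $G_\delta$-density. Both are routine in $C^{1}$-generic dynamics but are the points requiring care.
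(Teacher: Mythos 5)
Your proposal follows exactly the route the paper indicates: the paper offers no detailed proof of Proposition~\ref{p.generic}, remarking only that it is ``a simple application of the connecting lemma in~\cite{BC}, applied to a branch of stable manifold of the singularity and the unstable manifold of the periodic orbit $\Orb(p)$.'' Your elaboration is correct, including the topological separation of $W^{cs}(\sigma)\setminus W^{s}(\sigma)$ into two flow-invariant components and the two points you rightly flag as needing care (keeping the connecting-lemma perturbation supported away from $\sigma$ and $\Orb(p)$ so that the local invariant manifolds and the branch decomposition persist, and upgrading density to residuality by making the heteroclinic intersection transverse, hence robust, on a Kupka--Smale residual set).
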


\begin{proof}[Proof of Corollary~\ref{mc.attractor}]
Let $\cR$ be the residual subset of flows that are Kupka-Smale, and satisfies the above properties.  Then for every Lorenz-like class $\Lambda$, by Theorem~\ref{m.positiveentropy} and the variational principle, there is a hyperbolic measure $\mu$ with positive entropy supported on $\Lambda$. Apply Theorem~\ref{t.horseshoe}, Lemma~\ref{l.shadowing} and Proposition~\ref{p.homoclinicclass}, 
we get that $\Lambda$ is a homoclinic class of a periodic orbit $p$ and is transitive. Since $\Lambda$ is Lyapunov stable, to prove that $\Lambda$ is an attractor, it suffices to show that it is isolated, i.e., it cannot be approximated by other chain recurrent classes.

Let $\{U_n\}$ be the sequence of Lyapunov stable neighborhoods of $\Lambda$. Suppose by contradiction that $\Lambda$ is not isolated. Then one can find chain recurrent classes $\Lambda\ne C_n\subset U_n$, with $\limsup C_n \subset \Lambda$.

Every $\phi_t\in\cR$ is Kupka-Smale, thus the singularities are all hyperbolic and isolated. Thus for $n$ large, the singularities in $U_n$ are precisely those in $\Lambda$, as we have observed in (\ref{e.singularities}), Section~\ref{ss.expansive}. Since chain recurrent class is an equivalent class, we must have $C_n\cap\Lambda = \emptyset$. We can therefore assume that for all $n$, $C_n$ does not contain singularities. Taking $n$ large if necessary, we see that $C_n$ are sectional hyperbolic without singularity, thus hyperbolic. As a result, there are hyperbolic periodic point $p^n\in C_n$. Let $\Lambda_0$ be the Hausdorff limit of $\Orb(p^n)$, which is a compact, invariant and sectional hyperbolic subset of $\Lambda$.  

We claim that $\Lambda_0$ contains a singularity. If this claim is not true, then $\Lambda_0$ is hyperbolic. For $n$ large enough, the hyperbolic sets $C_n$ and $\Lambda_0$ must be homoclinic related; as a result, $C_n$ and $\Lambda$ are in fact the same homoclinic class. This contradicts our assumption that $C_n\ne\Lambda$.    

Let $\sigma\in\Lambda_0\subset\Lambda$ be a singularity.  By Lemma~\ref{l.domination.singularity}, we have a hyperbolic splitting $E^s\oplus E^c \oplus E^u$ on $T_\sigma M$ with $\dim E^c =1$, and $E^s\oplus E^c$ is the stable subspace of $T_\sigma M$. As in Lemma~\ref{l.domination.singularity} we have $W^s(\sigma)\cap \Lambda = \{\sigma\}$. By Proposition~\ref{p.generic}, $W^s(\sigma)$ divides $W^{cs}(\sigma)$ into two branches, $W^{cs,\pm}(\sigma)$.

The argument below is similar to the proof of Lemma~\ref{l.domination.singularity}.
Since $\Lambda_0$ is the Hausdorff limit of $\Orb(p^n)$, we may assume that $p^n\to\sigma$. Fix $\varepsilon>0$ small, let $t_n<0$ be the last time such that $\phi_{t_x}(p^n)\in \partial B_\varepsilon(\sigma)$. It is easy to see that $t_n\to -\infty$. Let $z^n = \phi_{t_n}(p_n)$ and $z^n\to z$, then $z\in W^{cs}(\sigma)$. 

We may assume that $z\in W^{cs,+}(\sigma )$. By Proposition~\ref{p.generic}, we can take $a\in W^{cs,+}(\sigma)\cap W^u(\Orb(p))$, where $p$ is a periodic point in $\Lambda$. Take $s$ such that $a \in W^u(\phi_s(p))$ and a disk $D$ with $a \in D \subset W^u(\phi_s(p))$. Then by the $\lambda$-lemma, $\phi_t(D)$ approximated $W^u(\sigma)$, and $\{\phi_t(x):t>0,x\in D\}$ is a submanifold that is tangent to the $F^{cu}$ bundle, with dimension $\dim F^{cu}$. Thus $W^s(z^n)\pitchfork\{\phi_t(x):t>0,x\in D\}\ne\emptyset$. 

On the other hand, $\phi_t(D)$, as a subset of $W^u(\phi_s(p))$ with $p\in \Lambda_0\subset \Lambda$, must be contained in $\Lambda$. This shows that $W^s(z^n)\cap\Lambda \ne \emptyset$. In particular, for $n$ large, we have
$$
d(\phi_t(p_n),\Lambda) \to 0 \mbox{ as } t\to\infty.
$$
As $p_n\in\C_n$, this shows that $C_n$ and $\Lambda$ are the same chain recurrent class, a contradiction.
\end{proof}

\appendix
\section{On non-hyperbolic singularities}
Here we will demonstrate how to remove the assumption on the hyperbolicity of singularities in Theorem~\ref{m.h-expansive}. The theorem that we will prove is:

\begin{main}\label{m.h-expansive.non-hyperbolic}
Let $\Lambda$ be a compact invariant set that is sectional hyperbolic for a $C^1$ flow $\phi_t$. Then there is a neighborhood $U$ of $\Lambda$, such that $\phi_t|_{U}$ is entropy expansive. 
\end{main}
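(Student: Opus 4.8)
The plan is to rerun the proof of Theorem~\ref{m.almostexpansive} (hence of Theorem~\ref{m.h-expansive}), locating the unique place where the hyperbolicity of the singularities entered and replacing it by a linear-algebra observation at the singularity. Fix a small neighborhood $U$ of $\Lambda$ so that the maximal invariant set $\tl$ of $\phi_t|_U$ is still sectional hyperbolic, with dominated splitting $E^s\oplus F^{cu}$, and let $f=\phi_1$. Lemma~\ref{l.fakefoliation} produces the fake foliations $\cF^s,\cF^{cu}$ on $\tl$, and the proofs of Lemma~\ref{l.bowenball} ($B_\infty(x,r_0)\subset\cF^{cu}_x(x)$), Corollary~\ref{c.saturation} (weak saturation of $\cF^{cu}$) and Lemma~\ref{l.expansiong.away} (expansion along $\cF^{cu}$ for orbit segments staying $\varepsilon$-away from $\Sing(X)$) use only the uniform contraction on $E^s$, the local product structure of the fake foliations, the sectional expansion on $F^{cu}$, and the boundedness of the flow speed away from $\Sing(X)$ — none of which involves the singularities being hyperbolic; in particular $X|_{\tl\setminus\Sing(X)}\subset F^{cu}$ holds exactly as before. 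Consequently the whole argument again reduces to establishing an analogue of Lemma~\ref{l.expansion.near}: for every $L>1$ there are $\varepsilon\ll\delta_1$ and $\delta''>0$ such that any orbit segment of $\tl$ which transits the $\varepsilon$-neighborhood of $\Sing(\phi_t|_\Lambda)$ between two points of ``speed $\approx\delta_1$'' picks up an expansion factor $>L$ on the normal direction inside the $\cF^{cu}$ leaf, for companion points in $B_\infty(\cdot,\delta'')$.

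The only new ingredient is the analogue of Lemma~\ref{l.domination.singularity}. For $\sigma\in\Sing(\phi_t|_U)$ the tangent flow at $\sigma$ is linear, and sectional expansion forbids $\Phi_1|_{F^{cu}_\sigma}$ from having two independent non-expanding directions: the $2$-plane $V$ spanned by the two slowest (generalized) eigendirections of $\Phi_1|_{F^{cu}_\sigma}$ would have $\Phi_t|_V$ of at most polynomial growth, contradicting the lower bound $|\det(\Phi_t|_{V})|\ge Ce^{\lambda t}$. Hence $F^{cu}_\sigma$ has at most one eigenvalue of norm $\le 1$; and if it has one, a complex conjugate pair would again span a forbidden $2$-plane, so it is a real simple eigenvalue with non-positive exponent. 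Letting $E^u_\sigma$ be the sum of the (norm $>1$) eigendirections and $E^c_\sigma$ the complementary summand, we obtain over $\Sing(\phi_t|_U)$ a dominated splitting $E^s\oplus E^c\oplus E^u$ with $E^c\oplus E^u=F^{cu}$, $E^s$ uniformly contracting, $E^u$ uniformly expanding and $\dim E^c\le 1$; the sole difference with the hyperbolic case is that the exponent on $E^c_\sigma$ may now be $0$ rather than strictly negative. Crucially this is proved purely at the linear level of $\sigma$, so it does not use the argument through $W^s(\sigma)$ from the proof of Lemma~\ref{l.domination.singularity}. Since $\Sing(\phi_t|_\Lambda)$ need no longer be isolated, we cannot arrange $\Sing(\phi_t|_U)=\Sing(\phi_t|_\Lambda)$; instead we shrink $U$ so that $\Sing(\phi_t|_U)$ lies in a fixed neighborhood $V$ of $\Sing(\phi_t|_\Lambda)$ to which the continuous dominated splitting $E^s\oplus E^c\oplus E^u$, and with it the fake foliations $\cF^s_S,\cF^c_S,\cF^u_S,\cF^{cs}_S,\cF^{cu}_S$ of Lemma~\ref{l.fakefoliation}, extend, and we carry out the transit estimate relative to $V$ rather than to individual balls $B_{\delta_0}(\sigma)$.

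With the fine splitting available, the proof of Lemma~\ref{l.expansion.near} goes through essentially verbatim. One blends the two families of fake leaves by $\bcf^c_x(z)=\cF^{cs}_S(z)\cap\cF^{cu}_x(x)$, builds the local product structure $[z,x_n]=\bcf^c(z)\pitchfork\cD^u_n$ from a disk $\cD^u\subset\cF^{cu}$ tangent to the $E^u$ cone and its iterated components $\cD^u_n$ inside $\cF^{cu}_{x_n}(x_n)$, and decomposes a path from $x$ to its $B_\infty$-companion $\tilde y$ into an $E^u$-piece and an $E^c$-piece. Under $f^{-T}$ the $E^u$-piece contracts at a uniform exponential rate by the uniform expansion of $E^u$, while the projection of the $E^c$-piece onto the normal plane $N_x$ contracts because the $2$-plane spanned by a center vector and the flow direction is volume-expanded by the sectional hyperbolicity of $f$. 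This last estimate uses only the volume expansion and no sign condition on the center exponent, so replacing a negative $E^c_\sigma$-exponent by $0$ is harmless. As in Section~\ref{ss.expansive}, the transit time $T$ through $B_\varepsilon(\Sing(\phi_t|_\Lambda))$ tends to $\infty$ as $\varepsilon\to 0$, giving an arbitrarily large expansion factor; combined with Lemma~\ref{l.expansiong.away} and the weak saturation, this forces $B_\infty(x,\delta)$ to be a single orbit segment for every ergodic invariant measure $\mu$ and $\mu$-a.e.\ $x$, where $\delta=\min\{\delta',\delta''\}$. Lemma~\ref{l.almostexpansive} then upgrades this to $\delta$-entropy expansiveness of $\phi_t|_U$.

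I expect the main obstacle to be the bookkeeping forced by a possibly non-isolated singular set: verifying that the fine splitting $E^s\oplus E^c\oplus E^u$ genuinely extends to a neighborhood of $\Sing(\phi_t|_\Lambda)$, and that the ``enter and exit $J_{\delta_1}$, transit through $B_\varepsilon$'' decomposition of orbit segments underlying Lemma~\ref{l.expansion.near} still closes up when a whole chunk of singularities sits inside one component of $V$. The analytic core — the Jacobian/area estimate driven by sectional hyperbolicity — is unchanged from the hyperbolic case.
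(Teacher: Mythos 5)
Your proposal correctly isolates the places where the hyperbolic case was used, and your linear-algebra derivation of the fine splitting $E^s\oplus E^c\oplus E^u$ at $\sigma$ (two non-expanding directions in $F^{cu}_\sigma$ would span a $2$-plane with sub-exponential volume growth, contradicting sectional expansion) is a cleaner route than the paper's, which merely references the proof of Lemma~\ref{l.domination.singularity}. Your observation that the area estimate driving Lemma~\ref{l.expansion.near} uses only sectional volume expansion and no sign on the center exponent is also correct.

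However, the issue you flag at the end as ``bookkeeping'' is not bookkeeping; it is where the paper's Appendix does essentially all of its new work, and your sketch has a genuine gap there. The proof of Lemma~\ref{l.expansion.near} requires that at the exit point $x_T$ the flow direction $X(x_T)$ lies in the $E^u$ cone, so that $l^c_T$ (tangent to $E^c$) is uniformly transverse to the flow and the comparison $\frac{1}{C_1}\le\length(\tilde l^c_T)/\length(l^c_T)\le C_1$ holds before the holonomy/Jacobian estimate is invoked. In the non-hyperbolic case the center eigenvalue can have norm exactly $1$, the singular set can contain whole center segments of singularities and saddle connections, and the nonlinear local center dynamics can be \emph{topologically expanding} on one side of $\sigma$; an orbit approaching from that side may exit along the center direction, so $X(x_T)$ lands in the $E^c$ cone and the transversality breaks. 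Simply ``replacing a negative $E^c_\sigma$-exponent by $0$'' does not rule this out. The paper handles it with a chain of lemmas that your sketch omits: Lemma~\ref{l.bowen.singularity} identifies $\Sing(X)\cap\tl$ as finitely many isolated singularities plus finitely many center segments; Lemma~\ref{l.centersegment} shows that no non-trivial ergodic measure charges tubular neighborhoods of the interiors of those segments; and Lemma~\ref{l.top.contracting} shows that any half-ball $B^h_{\delta_1}(\sigma)$ actually charged by an invariant measure has \emph{topologically contracting} center dynamics, so measure-typical orbits necessarily exit through $E^u$. Only after these reductions does the proof of Lemma~\ref{l.expansion.near} ``go through essentially verbatim'' as you claim. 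Without an argument replacing Lemmas~\ref{l.centersegment} and~\ref{l.top.contracting} (or some other mechanism forcing the exit direction into the $E^u$ cone for measure-typical transits), the sketch does not close.
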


Recall that Theorem~\ref{m.positiveentropy} was proven using Theorem~\ref{m.h-expansive} and Theorem~\ref{t.positiveentropy}, where the later  does not require any information on the singularity. This allows one to easily obtain the following version of Theorem~\ref{m.positiveentropy}, without assuming the hyperbolicity of singularities:

\begin{main}\label{m.positiveentropy.non-hyperbolic}
Let $\Lambda$ be a compact invariant set that is Lyapunov stable and sectional hyperbolic for a $C^1$ flow $\phi_t$. Then $h_{top}(\phi_t|_\Lambda)>0$.
\end{main}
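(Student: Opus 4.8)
The plan is to run, essentially verbatim, the proof of Theorem~\ref{m.positiveentropy}, replacing the single place that invoked hyperbolicity of the singularities --- the entropy expansiveness supplied by Theorem~\ref{m.h-expansive} --- with the singularity-free Theorem~\ref{m.h-expansive.non-hyperbolic}. Write $f=\phi_1$ and let $\lambda>0$ be the sectional volume expanding rate of $F^{cu}$ on $\Lambda$. Using Lyapunov stability, fix a nested family of compact neighborhoods $U_1\supset U_2\supset\cdots$ with $\bigcap_i U_i=\Lambda$ and $\phi_t(U_{i+1})\subset U_i$ for all $t\ge 0$. Shrinking $U_1$, the dominated splitting $E^s\oplus F^{cu}$ and a forward invariant $F^{cu}$ cone field extend to $U_1$ (hence to each $U_i$), and, up to decreasing $\lambda$ slightly, the volume estimate defining sectional-expansion persists for $2$-planes in the $F^{cu}$ cone over $U_1$.

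\emph{Step 1: $h_{top}(f|_{U_i})\ge\lambda$ for every $i$.} Pick $x\in\Lambda$ and a small $C^1$ disk $D\subset U_{i+1}$ through $x$ with $\dim D=\dim F^{cu}$, tangent to the local $F^{cu}$ cone. Since $\phi_t(U_{i+1})\subset U_i$ for $t\ge 0$, we have $f^n(D)\subset U_i$ for all $n\ge 0$, hence $f^n(D)$ stays tangent to the $F^{cu}$ cone and, by sectional-expansion, $\Leb(f^n(D))\ge Ce^{\lambda n}$; thus $v_{F^{cu}}(D)\ge\lambda$. Applying Theorem~\ref{t.positiveentropy} to the diffeomorphism $f$ on $U_i$ (it uses no information on the singularities, and by the remark following it, it applies to disks lying in a neighborhood of the set carrying the dominated splitting) yields
$$
h_{top}(f|_{U_i})\ge\lambda\qquad\text{for all }i.
$$
By the variational principle choose $f$-invariant ergodic measures $\mu_i$ with $\supp\mu_i\subset U_i$ and $h_{\mu_i}\ge\lambda-\tfrac1i$, and pass to a weak-$*$ limit $\mu_i\to\mu_0$ along a subsequence. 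Since $\supp\mu_i\subset U_i$ and $\bigcap_i U_i=\Lambda$, the portmanteau theorem forces $\mu_0(\Lambda)=1$, so $\mu_0$ is an $f$-invariant measure on $\Lambda$.

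\emph{Step 2: pass to the limit.} By Theorem~\ref{m.h-expansive.non-hyperbolic} there is a neighborhood $U$ of $\Lambda$ on which $\phi_t$ --- equivalently, on which its time-one map $f$ --- is entropy expansive; fix $i_0$ with $U_{i_0}\subset U$. For $i\ge i_0$ the measures $\mu_i$ and the limit $\mu_0$ are all supported in $U_{i_0}$, and entropy expansiveness of $f$ there makes the metric entropy function upper semicontinuous on invariant measures supported in $U_{i_0}$ (Lemma~\ref{l.variationalprinciple}). Hence $h_{\mu_0}\ge\limsup_{i}h_{\mu_i}\ge\lambda$, and by the variational principle applied to $f|_\Lambda$,
$$
h_{top}(\phi_t|_\Lambda)=h_{top}(f|_\Lambda)\ge h_{\mu_0}\ge\lambda>0,
$$
which is the assertion (note $\mu_0$ is automatically non-trivial, since it has positive entropy).

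There is no serious obstacle here; the only point needing a moment's care --- the ``hard part,'' such as it is --- is to confirm that the substitution is legitimate, namely that the proof of Theorem~\ref{m.positiveentropy} used entropy expansiveness only on a \emph{neighborhood} of $\Lambda$ (to obtain upper semicontinuity of $h_\mu$ along a sequence of measures collapsing onto $\Lambda$) and never used any structure of the singularities. Since Theorem~\ref{m.h-expansive.non-hyperbolic} provides precisely ``entropy expansiveness on a neighborhood of $\Lambda$'' with no hypothesis on $\Sing(X)$, and Theorem~\ref{t.positiveentropy} is equally indifferent to singularities, the entire proof carries over without change.
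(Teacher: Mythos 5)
Your proposal is correct and is exactly the paper's own argument: the appendix derives Theorem~\ref{m.positiveentropy.non-hyperbolic} by observing that the proof of Theorem~\ref{m.positiveentropy} only invoked hyperbolicity of singularities through Theorem~\ref{m.h-expansive}, and that this input can be replaced by Theorem~\ref{m.h-expansive.non-hyperbolic} while Theorem~\ref{t.positiveentropy} is indifferent to singularities. Your write-up is in fact slightly more careful than the paper's one-sentence remark (e.g.\ taking $D\subset U_{i+1}$ so that Lyapunov stability keeps $f^n(D)\subset U_i$ in the domain of the extended cone field), but the route is the same.
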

 
In order to prove Theorem~\ref{m.h-expansive.non-hyperbolic}, we use the same argument as in Section~\ref{ss.expansive} by showing that all ergodic measures are $\varepsilon$-almost entropy expansive. The singularities being hyperbolic or not does not affect the measures that are supported away from singularities. In other words, Proposition~\ref{p.measure.away} remains valid.

To deal with measures whose support is close to some singularity, we need to establish the (topological) contracting property near singularities. This is done by a sequence of lemmas. Recall that $\tl$ is the maximal invariant set of $\phi_t$ in a small neighborhood $U$ of $\Lambda$ (when choosing $U$, there is no need to have $\Sing(\phi_t|_U)= \Sing(\phi_t|_{\Lambda})$). We refer the reader to the beginning of Section~\ref{sss.singularity} for the meaning of symbols.  

The first lemma is similar to Lemma~\ref{l.domination.singularity}. One can easily check that the proof of Lemma~\ref{l.domination.singularity} applies with slight modification.

\begin{lemma}\label{l.domination.nh.singularity}
$f|_{\Sing(X)\cap\tl}$ has a partially hyperbolic splitting $E^s\oplus E^c\oplus E^u$, where $E^c$ is a one-dimensional sub-bundle of $F^{cu}$ corresponding to an eigenvalue with norm at most one.
\end{lemma}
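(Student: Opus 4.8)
The plan is to follow the proof of Lemma~\ref{l.domination.singularity} almost verbatim; the only genuinely new point is that, without hyperbolicity of $\sigma$, the ``central'' eigenvalue of $Df|_{F^{cu}_\sigma}$ is allowed to sit on the unit circle, so the conclusion weakens to \emph{one eigenvalue of modulus at most one} instead of strictly less than one.

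First I would do the linear algebra at a singularity $\sigma\in\Sing(X)\cap\tl$. Since $F^{cu}$ is sectional expanding, $|\det Df|_{V}|>1/\lambda_0>1$ for every two-dimensional $V\subset F^{cu}_\sigma$. This forces $Df|_{F^{cu}_\sigma}$ to have at most one eigenvalue of modulus $\le 1$: two real such eigenvalues, or a non-real eigenvalue together with its conjugate, would span a real two-plane on which $Df$ fails to expand area. Hence, once we know such an eigenvalue exists, it is real and simple, its eigenline is a one-dimensional $Df$-invariant subspace $E^c_\sigma\subset F^{cu}_\sigma$, and the span of the remaining generalized eigenspaces is a $Df$-invariant complement $E^u_\sigma\subset F^{cu}_\sigma$ with all eigenvalues of modulus $>1$. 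Together with $E^s_\sigma$, which is uniformly contracting, and using the domination of $E^s$ over $F^{cu}$, the splitting $E^s\oplus E^c\oplus E^u$ is $Df$-invariant and --- after replacing $f$ by a high enough power if necessary, as in the footnote in Section~\ref{ss.expansive} --- satisfies the domination inequalities, so it is partially hyperbolic.

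Next I would rule out the remaining possibility, namely that $Df|_{F^{cu}_\sigma}$ expands all of $F^{cu}_\sigma$ (no eigenvalue of modulus $\le 1$). This is precisely the argument in the proof of Lemma~\ref{l.domination.singularity}, and the key observation is that that argument never used hyperbolicity of $\sigma$: if $F^{cu}_\sigma$ were uniformly expanded then $W^s(\sigma)$ would be tangent at $\sigma$ to $E^s_\sigma$; choosing (along orbits of $\tl$ accumulating on $\sigma$) the limit $y^0\in\tl\setminus\Sing(X)$ of the last-entry points into $B_\varepsilon(\sigma)$, one gets $y^0\in W^s(\sigma)\setminus\{\sigma\}$, so $X(y^0)$ lies in the $E^s$ cone, whence a short flow segment through $y^0$ is tangent to the $E^s$ cone and is stretched without bound by $f^{-1}$, contradicting the a priori bound on its length coming from boundedness of the flow speed. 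So I would simply reproduce this argument.

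The one place needing a remark is a singularity $\sigma$ that is isolated in $\tl$ and not accumulated by regular orbits of $\tl$, for which the previous step is vacuous; such a $\sigma$ with a repelling $F^{cu}_\sigma$ is automatically hyperbolic for $f$, its infinite Bowen ball is a point, and it therefore plays no role in the entropy-expansiveness argument (it carries only trivial dynamics), so I would either exclude it from the statement or note that the conclusion is only needed for $\sigma\in\overline{\tl\setminus\Sing(X)}$, for which Proposition~\ref{p.measure.away} and its sequel are the relevant estimates. I do not expect any real obstacle here: the work is the bookkeeping of Step~1 (checking that ``modulus $\le 1$'' yields a one-dimensional \emph{real} center bundle) and the verification in Step~2 that the dynamical argument of Lemma~\ref{l.domination.singularity} was genuinely independent of hyperbolicity.
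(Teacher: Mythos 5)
Correct, and essentially the same as what the paper intends: the paper's own ``proof'' of this lemma is the one-line remark that the argument of Lemma~\ref{l.domination.singularity} applies with slight modification, and your two steps spell out precisely what that modification is --- the dynamical $E^s$-cone argument rules out a fully expanding $F^{cu}_\sigma$ without ever using hyperbolicity of $\sigma$, and the sectional-expansion linear algebra then forces the non-expanded eigenvalue to be real, simple, and unique. Your caveat about a singularity $\sigma$ not accumulated by regular orbits of $\tl$ is a genuine (if minor) gap glossed over in the paper's statement: for such a $\sigma$ the cone argument is vacuous, $F^{cu}_\sigma$ may be entirely expanding, and $E^c=\{0\}$; restricting the conclusion to $\sigma\in\overline{\tl\setminus\Sing(X)}$ as you propose is the right fix, and it is harmless for Theorem~\ref{m.h-expansive.non-hyperbolic} since such a $\sigma$ is then hyperbolic with degenerate infinite Bowen ball.
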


The next lemma describes the infinite Bowen ball at singularities. 

\begin{lemma}\label{l.bowen.singularity}~\cite{LVY}[Theorem 3.1]
For any singularity $\sigma\in\tl$, $B_\infty(\sigma,r_0/2)$ is a single point, or a $1$-dimensional center segment with length bounded by $r_0$. In the first case the singularity $\sigma$ must be isolated. In the second case, the center segment consists of singularities and saddle connections. Moreover, there are only finitely many such singularities and center segments.
\end{lemma}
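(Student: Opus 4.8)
The plan is to adapt the proof of \cite[Theorem 3.1]{LVY}, the new input being the partially hyperbolic splitting $E^s\oplus E^c\oplus E^u$ at $\sigma$ given by Lemma~\ref{l.domination.nh.singularity}. After extending this splitting to the ball $B_{\delta_0}(\sigma)$, Lemma~\ref{l.fakefoliation} produces the fake foliations $\cF_S^s,\cF_S^c,\cF_S^u,\cF_S^{cs},\cF_S^{cu}$ with their usual invariance and sub-foliation properties; in particular $\cF_S^{cs}(\sigma)\cap\cF_S^{cu}(\sigma)=\cF_S^c(\sigma)$ is a $C^1$ curve through $\sigma$ of dimension $\dim E^c=1$. \textbf{Step 1 (reduction to the center leaf).} Since $\sigma$ is fixed by $f$, every $y\in B_\infty(\sigma,r_0/2)$ satisfies $f^n(y)\in B_{r_0/2}(\sigma)$ for all $n\in\ZZ$. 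Arguing exactly as in the proof of Lemma~\ref{l.bowenball}, but using the local product structure of the finer splitting: projecting $y$ along $\cF_S^u$ onto $\cF_S^{cs}(\sigma)$ and using the forward invariance of the foliations together with the uniform expansion of $Df$ on $E^u$ forces the $\cF_S^u$--component of $y$ to be trivial, so $y\in\cF_S^{cs}(\sigma)$; projecting along $\cF_S^s$ and iterating backwards, the uniform expansion of $Df^{-1}$ on $E^s$ forces $y\in\cF_S^{cu}(\sigma)$. Hence $B_\infty(\sigma,r_0/2)\subseteq\cF_S^c(\sigma)$. (If the center eigenvalue has norm strictly less than one, running the same argument for $E^c$ under backward iteration already forces $B_\infty(\sigma,r_0/2)=\{\sigma\}$, which is the hyperbolic situation.)

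\textbf{Step 2 (point--or--segment dichotomy).} The set $B_\infty(\sigma,r_0/2)$ is invariant under $f$ and $f^{-1}$ and is now contained in the one-dimensional curve $\cF_S^c(\sigma)$, on which $f$ does not expand at $\sigma$ by Lemma~\ref{l.domination.nh.singularity}; the one-dimensional analysis of \cite[Theorem 3.1]{LVY} then shows it is a closed subinterval of $\cF_S^c(\sigma)$ containing $\sigma$, of length at most $r_0$. If this interval is the single point $\{\sigma\}$, then $\sigma$ is isolated in $\Sing(X)\cap\tl$: any other singularity $\sigma'$ with $d(\sigma',\sigma)<r_0/2$ would satisfy $d(f^n(\sigma'),f^n(\sigma))=d(\sigma',\sigma)<r_0/2$ for every $n$, hence $\sigma'\in B_\infty(\sigma,r_0/2)$, a contradiction. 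If instead $B_\infty(\sigma,r_0/2)$ is a non-degenerate segment $\gamma$, then for every regular $y\in\gamma$ Corollary~\ref{c.saturation} (combined with Step~1 applied with radius $r_0$) places a whole flow arc through $y$ inside $\cF_S^c(\sigma)$, so $X(y)$ is tangent to $\gamma$; thus $\gamma$ is tangent to the flow at its regular points, and being a bounded one-dimensional set, its regular orbits have $\alpha$- and $\omega$-limit sets reduced to singularities lying in $\gamma$. Therefore $\gamma$ consists of singularities and saddle connections.

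\textbf{Step 3 (finiteness).} Two such non-degenerate segments $\gamma_1\ni\sigma_1$ and $\gamma_2\ni\sigma_2$ are either equal or disjoint: a common point $y$ gives $d(f^n(\sigma_1),f^n(\sigma_2))<r_0$ for all $n$, so $\sigma_2\in B_\infty(\sigma_1,r_0)$ and, running Steps~1--2 with radius $r_0$, both $\gamma_1$ and $\gamma_2$ are subintervals of the same center arc $\cF_S^c(\sigma_1)$ meeting at $y$, which together with invariance identifies them. One then excludes an infinite family of pairwise disjoint such segments (and of isolated singularities falling under the first case) by a compactness argument: if they accumulated on a limit singularity $\sigma_*$, comparing their infinite Bowen balls with $B_\infty(\sigma_*,r_0)$ as in \cite[Theorem 3.1]{LVY} forces them to coincide with the center segment of $\sigma_*$ for all large indices, a contradiction. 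I expect this last accumulation analysis to be the main obstacle: one has to rule out a sequence of ever-shorter saddle-connection segments clustering near a limit, which is the technical core inherited from \cite{LVY}; the reduction in Steps~1--2 is routine once the fake foliations at $\sigma$ are available.
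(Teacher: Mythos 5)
Your proposal is correct and follows essentially the same route as the paper: Steps~1--2 reproduce the argument of~\cite{LVY}[Theorem~3.1], which the paper cites verbatim, and Step~3 matches the paper's one-sentence finiteness remark (every center segment lies in some $B(\sigma,r_0)$ and there can be only one per $r_0$-ball, so compactness of $\tl$ gives finiteness). Your worry that the accumulation analysis in Step~3 is ``the main obstacle'' is therefore unfounded; the paper disposes of it in a single line, by the same compactness observation you make.
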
 
The finiteness of such singularities and center segments comes from the fact that every center segment must be contained in $B(\sigma,r_0)$, and there can only be one such segment in each $r_0$-ball. 

This lemma allows one to write $\Sing(X)\cap\tl \subset \{\sigma_1,\ldots,\sigma_m\}\cup I_1\cup\cdots\cup I_k$, where $\{\sigma_1,\ldots,\sigma_m\}$ are isolated singularities, and $\{I_1,\ldots,I_k\}$ are center segments. Each $I_i$ is fixed by $f=\phi_1$ and is contained in $B_\infty(\sigma_i,r_0/2)$ for $\sigma_i\in I_i$. We may assume that for $i\ne j$, $I_i$ and $I_j$ intersect (if they intersect at all) at boundary points, which must be a singularity. Taking a double cover if necessary, we can assume that $E^c$ is orientable, which allows us to label the end points of $I_i$ as {\em left extremal point $\sigma^-_i$}    and {\em right extremal point $\sigma^+_i$}.

Next we describe the dynamics near each center segment. Recall that $\cF^{cu}_\sigma$ are the fake foliations near $\sigma$, given by the dominated splitting $E^s\oplus F^{cu}$. We denote for every sub-center segment $I\subset I_i$,
$$
W^{uu}_\delta(I) = \bigcup_{x\in I}\cF^{cu}_{\sigma_i}(x,\delta), \mbox{ and } \cN_\delta(I) = \bigcup_{y\in W^{uu}_\delta(I)}\cF^s_{\sigma_i}(x,\delta).
$$
Then $\cN_\delta(I)$ is a ``box'' containing $I$, with size $\delta$. 

For each singularity $\sigma$, We can treat define $\cN_\delta(\sigma)$ in a similar way. In this case, $\cN_\delta(\sigma)$ is a co-dimensional one sub-manifold containing $\sigma$.

The next lemma states that for any non-trivial invariant measure $\mu$, generic point cannot  approximate the interior of each  segment $I_i$. 

\begin{lemma}\label{l.centersegment}
There are $\delta$, $r_2>0$ and finitely many center segments $I_{i,j}\subset I_i$ for $j=1,\ldots,k_i$ with $\length (I_{i,j})<r_2$ and $\Sing(X)\cap I_i = \cup_{j=1}^{k_i}\Sing(X)\cap I_{i,j}$, such that for any non-trivial invariant, ergodic measure $\mu$, we have $\mu(\cN_\delta(I_{i,j}))=0$ for every $I_{i,j}$. 
\end{lemma}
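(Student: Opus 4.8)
The plan is to exploit the contraction structure inside each center segment $I_i$. Recall that $I_i$ lies inside $B_\infty(\sigma_i, r_0/2)$ and consists of singularities and saddle connections, and by Lemma~\ref{l.domination.nh.singularity} the bundle $E^c$ restricted to $\Sing(X)\cap\tl$ corresponds to an eigenvalue of norm at most one. The key point is that on the box $\cN_\delta(I)$, built from the fake unstable leaves $\cF^{cu}_{\sigma_i}$ and the fake stable leaves $\cF^s_{\sigma_i}$, the dynamics of $f$ near $I_i$ behaves like a (weak) saddle: the $E^s$ direction is uniformly contracted, and the $E^c$ direction is non-expanding, so the only possible ``escape'' direction is along the strong unstable part $E^u$ of $F^{cu}$. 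A generic point of a non-trivial ergodic measure $\mu$ spends a positive proportion of time — by the ergodic theorem, arbitrarily often — inside any fixed neighborhood of $\supp\mu$; if $\mu(\cN_\delta(I_{i,j}))>0$, then $\mu$-a.e.\ point returns to $\cN_\delta(I_{i,j})$ with positive frequency. The strategy is to show this is incompatible with $\mu$ being non-trivial, i.e.\ $\mu(\Sing(X))=0$, together with the sectional-expanding property along $F^{cu}$.

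First I would fix the scales: take $\delta, r_2 \ll r_0$, and subdivide each $I_i$ into finitely many sub-segments $I_{i,j}$ of length $<r_2$ so that on each $\cN_\delta(I_{i,j})$ the fake foliations $\cF^{cu}_{\sigma_i}, \cF^s_{\sigma_i}$ are well-defined, have the local product structure of Lemma~\ref{l.fakefoliation}, and the hyperbolic estimates (uniform contraction on $E^s$, near-neutrality on $E^c$, expansion on $E^u$) hold in a uniform way. Next, suppose for contradiction that $\mu$ is a non-trivial ergodic measure with $\mu(\cN_\delta(I_{i,j}))>0$ for some fixed $I_{i,j}$; write $Z = \cN_\delta(I_{i,j})$. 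Since $\mu(\Sing(X))=0$, we have $\mu(Z\setminus I_i)>0$, so $\mu$-a.e.\ point $x$ enters $Z\setminus I_i$ for a set of iterates of positive density. The orbit of such a point, while in $Z$, is pinched toward $I_i$ in the $E^s$ direction and stays bounded in the $E^c$ direction; since it is not in $I_i$, it must have a nonzero $E^u$ component which grows exponentially under $f$, forcing the orbit to exit $Z$ through the $E^u$ boundary within a bounded number of iterates. This gives a definite ``$\cF^{cu}$-volume'' expansion each time the orbit passes through $Z$: applying the sectional-expanding property of $F^{cu}$ (Definition~\ref{d.sectionalhyperbolic}), a $2$-dimensional disk in $\cF^{cu}_x(x)$ containing a piece of $I_i$ and a transverse $E^u$ segment is volume-expanded by a uniform factor on each such passage. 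Because passages happen with positive frequency along a $\mu$-generic orbit, the volume of the iterated disk would grow without bound, contradicting Lemma~\ref{l.boundedvolume} (disks tangent to a local $F^{cu}$ cone have uniformly bounded volume).

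The main obstacle I anticipate is making the ``escape through $E^u$ in bounded time'' argument uniform across all sub-segments $I_{i,j}$ and all points of $Z$, since the size of the $E^u$-component of an orbit point and the number of iterates needed to exit depend on how close the point is to $I_i$; one must set up the box $\cN_\delta(I_{i,j})$ so that the stable contraction wins uniformly over the finitely many segments, which is precisely why $r_2$ must be chosen small and the segments finite in number (using the finiteness already guaranteed by Lemma~\ref{l.bowen.singularity}). A secondary technical point is that the fake foliations $\cF^{cu}$ and $\cF^s$ are $f$-invariant but not $\phi_t$-invariant, so one should phrase the entire argument in terms of the time-one map $f$ and the dynamics on the boxes, and invoke Corollary~\ref{c.saturation} only where the weak saturation of $\cF^{cu}$ along short flow segments is genuinely needed. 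Once the uniform exit estimate is in place, combining it with Birkhoff's ergodic theorem (positive return frequency to $Z\setminus I_i$) and the bounded-volume Lemma~\ref{l.boundedvolume} yields the contradiction, proving $\mu(\cN_\delta(I_{i,j}))=0$ for every $I_{i,j}$.
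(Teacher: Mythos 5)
Your proposal takes a genuinely different route from the paper's, and unfortunately it contains gaps that I do not think can be repaired within your framework.

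The paper's proof works backward in time: assuming $\mu(\cN_{\delta_n}(I_{i,j}))>0$ for $\delta_n\to 0$, one picks points $x^n$ in the box off the local stable/unstable manifolds (both have $\mu$-measure zero because $\mu$ is non-trivial), follows the backward orbit to the last exit time $t_n<0$, and shows $t_n\to-\infty$ so that the exit points accumulate on a point $x^*\in W^s_{\loc}(I_{i,j})\setminus I_{i,j}$. Since $r_2$ is small, $x^*$ is close to the stable manifold of a singularity, hence $X(x^*)$ is tangent to the $E^s$ cone; the argument of Lemma~\ref{l.domination.singularity} then applies verbatim --- a nondegenerate flow segment tangent to the $E^s$ cone would have its length blown up by $f^{-n}$, contradicting the a priori bound coming from the bounded flow speed. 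The decisive mechanism is the flow direction being forced into the stable cone, which is incompatible with $\|X\|$ being bounded.

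Your proposal instead tries to get a contradiction from forward returns and $F^{cu}$-sectional volume expansion, and this has two genuine problems. First, as you yourself flag, the claim that the orbit ``exits $Z$ through the $E^u$ boundary within a bounded number of iterates'' is false: a $\mu$-generic point enters $Z\setminus I_i$ infinitely often, and on each visit its $E^u$-component can be arbitrarily small, so the escape time is unbounded. This is not a matter of tuning $r_2$ --- it is intrinsic to points accumulating on $W^s_{\loc}(I_i)$. Second, the intended contradiction with Lemma~\ref{l.boundedvolume} does not hold: that lemma bounds the volume of a disk tangent to a \emph{local} $F$ cone (i.e.\ a disk sitting inside a single $\varepsilon_0$-ball), and Lemma~\ref{l.boundedvolumebowen} bounds only $\Leb\bigl(g^n(D\cap B_n(x,\varepsilon))\bigr)$, not $\Leb(g^n(D))$. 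The volume $\Leb(f^n(D))$ of a $F^{cu}$-disk is \emph{expected} to grow exponentially (that is precisely what drives Theorem~\ref{t.positiveentropy}), so ``unbounded volume growth'' is not a contradiction with anything. In short, the sectional expansion of $F^{cu}$ is not the right tool for this lemma; what is needed is the $E^s$-cone/flow-speed incompatibility near the stable manifold, which your argument never touches. I would abandon the volume approach here and mirror the backward-exit analysis used in Lemma~\ref{l.domination.singularity}.
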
 
\begin{proof}
Note that $I_i$'s are normally hyperbolic sub-manifolds. By the stable manifold theorem, for each $x\in I_i$, $\cF^s_{\sigma_i}(x,r_0)$ is the local strong stable manifold of $x$. Similar result holds for the local strong unstable manifold. Moreover,
$$
W^s_{loc}(I_i) = \bigcup_{x\in I_i}\cF^s_{\sigma_i}(x,r_0),
$$
and the same holds for $W^u_{loc}(I_i)$.

Now we fix $r_2>0$ small enough, and divide $I_i$ into $k_i$ many sub-center segments $\{I_{i,j}\}$, each with length less than $r_2$, such that the boundary points of $I_{i,j}$ are singularities. In particular, each $I_{i,j}$ contains at least two singularities. Note that during this process, we may not  have $\cup_{j=1}^{k_i}I_{i,j} = I_i$, especially if $I_i$ contains a saddle connection with length larger than $r_2$.

Suppose that there is a non-trivial ergodic measure $\mu$ and $\delta_n\to 0$ with $$\mu(N_{\delta_n}(I_{i,j}))>0.$$ Since $\mu$ is non-trivial, we must have $\mu(W^s_{loc}(I_i)) = \mu(W^u_{loc}(I_i))=0$. This allows us to take 
$$
x^n \in N_{\delta_n}(I_{i,j})\setminus (W^s_{loc}(I_i)\cup W^u_{loc}(I_i)).
$$
The negative iteration of $x^n$ must leave $ N_{\delta_n}(I_{i,j})$. Take $t_n<0$ the last time such that $\phi_{t_n}(x^n)\in \partial N_{\delta_n}(I_{i,j})$. Then $t_n\to-\infty$ as $n\to\infty$. We may suppose that 
$$
\phi_{t_n}(x^n)\to x^*\in W^s_{loc}(I_{i,j})\setminus I_{i,j}.
$$
If $r_2$ is taken small enough, then $x^*$ is close to the stable manifold of the singularity contained in $I_{i,j}$. By continuity, $X(x^*)$ and $X(\phi_{t_n}(x^n))$ are tangent to the $E^s$ cone. Now one can apply the standard argument at the end of the proof of Lemma~\ref{l.domination.singularity} to get a contradiction.
\end{proof}

For every $0<\delta_1<r_2$, if $\sigma$ is an end point of some segment $I_{i,j}$, then  $\cN_\delta(\sigma)$ cuts the ball $B_{\delta_1}(\sigma)$ into two components, which we denote by $B^+_{\delta_1}(\sigma)$ and $B^-_{\delta_1}(\sigma)$. One of the components $B^\pm_{\delta_1}(\sigma)$ will intersect with $N_{\delta}(I_{i,j})$, in which case the half ball will have zero measure for any non-trivial ergodic measure $\mu$, according to the previous lemma. 
In this case we can write the other component (which does not intersect with $N_{\delta}(I_{i,j})$) as $B^h_{\delta_1}(\sigma)$. It is possible that there is another center segment $I_{i,j'}$, which intersect $I_{i,j}$ as $\sigma$. In this case, both components of $B_{\delta_1}(\sigma)$ must have zero measure for every non-trivial ergodic measure, due to the previous lemma. If this happens, we do not need to consider any of these two components.
Otherwise, there is no singularity inside the half ball $B^h_{\delta_1}(\sigma)$, according to the construction of $I_{i,j}$.

If $\sigma$ is an isolated singularity, then $\cN_\delta(\sigma)$ cuts the ball $B_{\delta_1}(\sigma)$ into two components. Unlike the previous case, both of these two components may have positive measure for some measure $\mu$. In this case, it is convenient to treat the isolated singularity $\sigma$ as a trivial center segment, and denote the two component of $B_{\delta_1}(\sigma)$  as $B^h_{\delta_1}(\sigma^\pm)$ respectively.

To summarize, we get a finite subset $A\in\Sing(X)\cap\tl$ (with each isolated singularity appears twice in $A$) and a collection of half balls $\{B^h_{\delta_1}(\sigma)\}_{\sigma\in A}$, each of which are singularity-free (of course, other than $\sigma$ itself) and may have positive measure for some invariant measure $\mu$. Furthermore, for every ergodic measure $\mu$, typical points of $\mu$ can only approximate a singularity by going through one of these half balls. Note that for $\sigma \in A$, $\cF^c_{\sigma_i}(\sigma, r_2)$ is also cut by $\cN_\delta(\sigma)$ into two branches, one of which intersects with the half ball $B^h_{\delta_1}(\sigma)$. We will denote  $$\cF^{c,h}_{\sigma_i}(\sigma, \delta_1) = \cF^{c}_{\sigma_i}(\sigma, r_2)\cap B^h_{\delta_1}(\sigma).$$

The next lemma establishes the contracting property along $\cF^{c,h}_{\sigma_i}(\sigma, r_2)$.

\begin{lemma}\label{l.top.contracting}
There is $\delta_1>0$, such that for every $\sigma\in A$, if $\mu(B^h_{\delta_1}(\sigma))>0$ for some non-trivial invariant measure $\mu$, then $\cF^{c,h}_{\sigma_i}(\sigma, \delta_1)$ is topological contracting. More precisely, for every $x\in \cF^{c,h}_{\sigma_i}(\sigma, \delta_1)$ we must have $f^nx\to\sigma$ as $n\to+\infty$.
\end{lemma}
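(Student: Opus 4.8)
The plan is to argue by contradiction, following closely the scheme used at the end of the proof of Lemma~\ref{l.domination.singularity} and combining it with Lemmas~\ref{l.bowen.singularity} and~\ref{l.centersegment}. First one disposes of the trivial case: by Lemma~\ref{l.domination.nh.singularity} the bundle $E^c$ over $\Sing(X)\cap\tl$ is one-dimensional and, after passing to the orientation double cover, $Df|_{E^c_\sigma}$ is multiplication by a positive number $\lambda_c\le 1$. If $\lambda_c<1$ then $E^c$ is uniformly contracted in a neighbourhood of $\sigma$, hence \emph{every} point of the fake centre leaf $\cF^c_{\sigma_i}(\sigma,\cdot)$ is forward asymptotic to $\sigma$ and the conclusion holds independently of $\mu$. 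So from now on assume $\lambda_c=1$; then $\sigma$ lies on a nontrivial centre segment $I_i$ and $f$ is neutral along $\cF^c_{\sigma_i}(\sigma,\cdot)$ at $\sigma$.

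Next I would pin down the dynamics of $f$ on $\ell:=\cF^{c,h}_{\sigma_i}(\sigma,\delta_1)\cup\{\sigma\}$, a half-open arc with endpoint $\sigma$; being a piece of a fake centre leaf it is locally $f$-invariant, with $\sigma$ fixed. I claim $f$ has no other fixed point on $\ell$: if $f(z)=z$ with $z\in\ell\setminus\{\sigma\}$, then $f^n z=z$ for all $n$ and $z\in B_{\delta_1}(\sigma)$ with $\delta_1\ll r_0$, so $z\in B_\infty(\sigma,r_0/2)$; by Lemma~\ref{l.bowen.singularity} $z$ must either be a singularity (impossible, since $B^h_{\delta_1}(\sigma)$ contains no singularity other than $\sigma$) or lie on a saddle connection (impossible, since a saddle-connection orbit is injective and hence has no period-one point). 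Therefore $f$ is strictly monotone on $\ell$ with respect to the natural order, so either $f^n x\to\sigma$ for every $x\in\ell$ — the desired conclusion — or $f^{-n}x\to\sigma$ for every $x\in\ell$; assume for contradiction the latter. Shrinking $\delta_1$ so that $\|X\|$ is as small as we wish on $B_{2\delta_1}(\sigma)$, the flow orbit between consecutive backward iterates of a point of $\ell$ stays inside $B_{2\delta_1}(\sigma)$, so in fact $\phi_{-t}(x)\to\sigma$ as $t\to+\infty$ for every $x\in\ell\setminus\{\sigma\}$; that is, $\ell\setminus\{\sigma\}\subset W^u(\sigma)$ and the half-leaf is ``centre unstable''.

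It remains to contradict $\mu(B^h_{\delta_1}(\sigma))>0$. Thickening $\ell$ by the (fake) strong-unstable foliation $\cF_S^u$ near $\sigma$, the set $W:=\bigcup_{x\in\ell}\cF_S^u(x,\delta)\subset B^h_{\delta_1}(\sigma)$ is a $\dim F^{cu}$-dimensional submanifold tangent to the $F^{cu}$ cone; since $Df^{-1}$ contracts $E^u$ and (by the previous step) contracts $\ell$ towards $\sigma$, the set $W$ is (after a final shrinking of $\delta_1$) $f^{-1}$-invariant with $f^{-n}|_W\to\sigma$, hence a local unstable set of $\sigma$. As at the end of the proof of Lemma~\ref{l.domination.singularity}, points of $W\setminus\{\sigma\}$ have $\alpha$-limit $\{\sigma\}$ and so are not $\alpha$-recurrent, whence $\mu(W)=0$; similarly $\mu(W^s_\loc(\sigma))=0$, and Lemma~\ref{l.centersegment} (which makes the neighbouring box $\cN_\delta(I_{i,j})$, and with it the complementary half ball, $\mu$-null) leaves $\mu(B^h_{\delta_1}(\sigma))>0$ undisturbed. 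I would then use the local product structure of $B^h_{\delta_1}(\sigma)$ as $W^s_\loc(\sigma)\times W$ together with the uniform contraction of $Df$ on $E^s$: a $\mu$-generic $x\in B^h_{\delta_1}(\sigma)$, which we may take outside the $\mu$-null sets $W$, $W^s_\loc(\sigma)$ and $\cN_\delta(I_{i,j})$, is bi-recurrent, yet each time its orbit sits in $B^h_{\delta_1}(\sigma)$ near $\sigma$ its strong-stable coordinate strictly expands under backward iterates and can only be ``reset'' by passing through the $\mu$-null boundary pieces; tracking this coordinate shows the backward orbit can return to $B^h_{\delta_1}(\sigma)$ only finitely often, contradicting Poincar\'e recurrence on the positive-measure set $B^h_{\delta_1}(\sigma)$.

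The main obstacle is exactly this last step — excluding the centre-expanding alternative. Unlike in Lemma~\ref{l.domination.singularity}, the flow direction near $\sigma$ lies in $F^{cu}=E^c\oplus E^u$, where backward iteration is non-expanding, so the clean ``orbit length blows up, contradicting bounded flow speed'' argument is not available verbatim; one genuinely has to play the vanishing of $\mu$ on the local stable and unstable sets of $\sigma$ and on the neighbouring box $\cN_\delta(I_{i,j})$ against the recurrence supplied by $\mu(B^h_{\delta_1}(\sigma))>0$, keeping careful track of how an orbit that has left $B^h_{\delta_1}(\sigma)$ can re-enter it near $\sigma$. The remaining ingredients — the reduction to $\lambda_c=1$, the monotonicity dichotomy on $\ell$, and the absence of extra fixed points on $\ell$ — are routine.
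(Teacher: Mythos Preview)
Your reduction to the topologically expanding case is fine, but the final contradiction step is a genuine gap, and more importantly your diagnosis of \emph{why} the Lemma~\ref{l.domination.singularity} argument is unavailable is wrong.

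Your recurrence argument does not go through. The strong-stable coordinate is only defined locally near $\sigma$; a $\mu$-typical point can leave $B^h_{\delta_1}(\sigma)$, wander through the rest of $\tl$, and re-enter with an arbitrary strong-stable coordinate, all without touching any of the $\mu$-null sets you list. Nothing forces the backward orbit to pass through $W$, $W^s_{\loc}(\sigma)$, or $\cN_\delta(I_{i,j})$ in order to ``reset'' that coordinate, so Poincar\'e recurrence is not contradicted.

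The paper's argument is exactly the one you dismissed. The point you are missing is that the limit point is taken on the \emph{strong stable} leaf $\cF^s(\sigma)$, which for the fixed point $\sigma$ coincides with the genuine flow-invariant strong stable manifold $W^{ss}(\sigma)$; hence $X(x^*)$ \emph{is} tangent to the $E^s$ cone there, and the length-blowup argument of Lemma~\ref{l.domination.singularity} applies verbatim. Concretely: assuming $\ell$ is topologically expanding and $\mu(B^h_{\delta_n}(\sigma))>0$ for $\delta_n\to 0$, pick $x^n\in B^h_{\delta_n}(\sigma)\cap\tl\setminus(W^u(\sigma)\cup\cF^s(\sigma))$ and let $t_n<0$ be the last time the backward orbit leaves the \emph{fixed} half-ball of radius $\delta_1$. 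Since both $E^u$ and the half centre branch are backward-contracting towards $\sigma$, while $E^s$ is backward-expanding, the exit point $\phi_{t_n}(x^n)$ has $c$- and $u$-coordinates of size at most $\delta_n$ and $s$-coordinate of size $\approx\delta_1$; any subsequential limit $x^*$ therefore lies in $\cF^s(\sigma)\setminus\{\sigma\}$ and in $\tl$. Now $X(x^*)$ is tangent to $\cF^s(\sigma)$, hence to the $E^s$ cone, and the bounded-flow-speed contradiction of Lemma~\ref{l.domination.singularity} finishes. Your assertion that ``the flow direction near $\sigma$ lies in $F^{cu}$'' is true at generic regular points of $\tl$, but the whole point is that it must \emph{fail} at $x^*$, and this failure is the contradiction.
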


In other words, if a half ball $\mu(B^h_{\delta_1}(\sigma))>0$ can be `seen' by some measure $\mu$, then the center direction must be topologically contracting.
\begin{proof}
Since $\cF^{c,h}_{\sigma_i}(\sigma, \delta_1)$ is invariant and contains no singularity, it must be topological contracting or expanding. If it is topological expanding, then $\cF^{c,h}_{\sigma_i}(\sigma, \delta_1)$ belongs to the unstable set of $\sigma$. We claim that there must be  $\delta_2<\delta_1$ such that $\mu(B^h_{\delta_2}(\sigma))=0$ for every non-trivial $\mu$. Since $A$ is a finite set, the lemma follows by shrinking  $\delta_1$ a finitely number of times.   

It remains to prove this claim. Assume by contradiction that there is a sequence $\delta_n\to 0$ and a measure $\mu$, such that 
$\mu(B^h_{\delta_n}(\sigma))>0$. Similar to the proof of the previous lemma, we can take
$$
x^n\in B^h_{\delta_n}(\sigma)\setminus(W^u(\sigma)\cup \cF^s(\sigma)).
$$
Take $t_n<0$ the last time that $\phi_{t_n}(x^n)\in \partial B^h_{\delta_n}(\sigma)$. Then $t_n\to -\infty$. Since $\cF^{c,h}_{\sigma_i}(\sigma, \delta_1)$ is topological expanding, we can take $\phi_{t_n}(x^n)\to x^*\in\cF^s(\sigma)$. The same argument in the proof of Lemma~\ref{l.domination.singularity} will create a contradiction.
\end{proof}

Thus far, we have shown that:
\begin{itemize}
\item there is a partially hyperbolic splitting $E^s\oplus E^c \oplus E^u$ on the set of singularities;
\item there are finitely many singularity-free half balls  $\{B^h_{\delta_1}(\sigma)\}_{\sigma\in A}$, such that the orbit of every typical point $x$ (with respect to some non-trivial ergodic measure) can only approximate a singularity by going through  these half balls;
\item if a half ball $B^h_{\delta_1}(\sigma)$ can be ``seen'' by a non-trivial measure $\mu$, then the center direction of $\sigma$ must be topological contracting.
\end{itemize}
In view of Remark~\ref{r.fakefoliation}, Lemma~\ref{l.expansion.near} can be proven using the same argument. One only need to replace the foliation $\cF^{cs}_S$ (given by the hyperbolic splitting on $\Sing(X)$) by the fake foliation $\hat{\cF}^{cs}_S$, generated by the partially hyperbolic splitting $E^s\oplus E^c \oplus E^u$ inside a neighborhood of the singularities. Then one can define the one-dimensional center fake foliation  $\bar{\cF}^c$ as the intersection of $\hat{\cF}^{cs}_S$ and $\cF^{cu}_x$, which will give a local product structure near the neighborhood of singularities. The rest of the proof of Lemma~\ref{l.expansion.near} remains unchanged.

\section*{Acknowledgements} The authors are grateful to the anonymous referees for their careful reading and helpful comments.

\end{document}